\newtheorem{theorem}{Theorem}[section]
\newtheorem{lemma}[theorem]{Lemma}
\newtheorem{corollary}[theorem]{Corollary}
\theoremstyle{definition}
\newtheorem{definition}[theorem]{Definition}
\newtheorem{proposition}[theorem]{Proposition}
\newtheorem{conjecture}[theorem]{Conjecture}
\theoremstyle{remark}
\newtheorem{remark}[theorem]{Remark}
\numberwithin{equation}{section}
\DeclareMathOperator{\Ima}{Im}
\begin{document}

\title{Deformations of Q-Curvarure I}


\author{Yueh-Ju Lin}
\address{(Yueh-Ju Lin) Department of Mathematics, University of Michgan, Ann Arbor, MI 48109, USA}
\email{yuehjul@umich.edu}

\author{Wei Yuan}
\address{(Wei Yuan) School of Mathematics and Computational Science, Sun Yat-sen University, Guangzhou, Guangdong 510275, China}
\email{gnr-x@163.com}




\keywords{deformation of $Q$-curvature, linearized stability, local surjectivity, rigidity}

\thanks{Research of the second author supported by NSF grant DMS-1005295 and DMS-1303543.}

\begin{abstract}
In this article, we investigate deformation problems of $Q$-curvature on closed Riemannian manifolds. One of the most crucial notions we use is the \emph{$Q$-singular space}, which was introduced by Chang-Gursky-Yang during 1990's. Inspired by the early work of Fischer-Marsden, we derived several results about geometry related to $Q$-curvature. It includes classifications for nonnegative Einstein $Q$-singular spaces, linearized stability of non-$Q$-singular spaces and a local rigidity result for flat manifolds with nonnegative $Q$-curvature. As for global results, we showed that any smooth function can be realized as a $Q$-curvature on generic $Q$-flat manifolds, while on the contrary a locally conformally flat metric on $n$-tori with nonnegative $Q$-curvature has to be flat. In particular, there is no metric with nonnegative $Q$-curvature on $4$-tori unless it is flat.
\end{abstract}

\maketitle



\section{Introduction}

In the theory of surfaces, the \emph{Gauss-Bonnet Theorem} is one of the most profound and fundamental results: 
\begin{align}\label{eqn:Gauss-Bonnet}
\int_M K_g dv_g = 2\pi \chi (M),
\end{align}
 for any closed surfaces $(M^2 ,g)$ with Euler characteristic $\chi (M)$. \\

On the other hand, the \emph{Uniformization Theorem} assures that any metric on $M^2$ is locally conformally flat. Thus an interesting question is that whether we can find a scalar type curvature quantity such that it generalizes the classic \emph{Gauss-Bonnet Theorem} in higher dimensions? \\

For any closed $4$-dimensional Riemannian manifold $(M^4, g)$, we can define the $Q$-curvature as follows
\begin{align}\label{Q_4}
Q_g = - \frac{1}{6} \Delta_g R_g - \frac{1}{2} |Ric_g|_{g}^2 + \frac{1}{6} R_g^2,
\end{align}
which satisfies the \emph{Gauss-Bonnet-Chern Formula} 
\begin{align}\label{Gauss_Bonnet_Chern}
\int_{M^4} \left( Q_g + \frac{1}{4} |W_g|^2_g \right) dv_g = 8\pi^2 \chi(M).
\end{align}
Here $R_g$, $Ric_g$ and $W_g$ are scalar curvature, Ricci curvature and Weyl tensor for $(M^4, g)$ respectively.\\

In particular, if $W_g = 0$, \emph{i.e.} $(M^4, g)$ is locally conformally flat, we have 
\begin{equation}\label{total_Q}
\int_{M^4} Q_g dv_g = 8\pi^2 \chi(M),
\end{equation}
which can be viewed as a generalization of (\ref{eqn:Gauss-Bonnet}).\\

Inspired by Paneitz's work (\cite{Paneitz}), Branson (\cite{Branson}) extended (\ref{Q_4}) and defined the $Q$-curvature for arbitrary dimension $n \geq 3$ to be 
\begin{align}
Q_{g} = A_n \Delta_{g} R_{g} + B_n |Ric_{g}|_{g}^2 + C_nR_{g}^2,
\end{align}
where $A_n = - \frac{1}{2(n-1)}$ , $B_n = - \frac{2}{(n-2)^2}$ and
$C_n = \frac{n^2(n-4) + 16 (n-1)}{8(n-1)^2(n-2)^2}$.\\

In the study of conformal geometry, there is an $4^{th}$-order differential operator closely related to $Q$-curvature, called Paneitz operator, which can be viewed as an analogue of conformal Laplacian operator:
\begin{align}
P_g = \Delta_g^2 - div_g \left[(a_n R_g g + b_n Ric_g) d\right] + \frac{n-4}{2}Q_g,
\end{align}
where $a_n = \frac{(n-2)^2 + 4}{2(n-1)(n-2)}$ and $b_n = - \frac{4}{n-2}$.\\

We leave the discussion of conformal covariance of $Q$-curvature and Paneitz operator in the appendix at the end of the article for readers who are interested in it.\\

The most fundamental motivation in this article is to seek the connection between $Q$-curvature and scalar curvature both as scalar-type curvature quantities. Intuitively, they should share some properties in common since both of them are generalizations of Gaussian curvature on surfaces. Of course, as objects in conformal geometry, a lot of successful researches have revealed their profound connections in the past decades. (See the appendix for a brief discussion.) However, when beyond conformal classes there are only very few researches on $Q$-curvature from the viewpoint of Riemannian geometry. 
Motivated by the early work of Fischer and Marsden (\cite{F-M}) on the deformation of scalar curvature, we started to consider generic deformation problems of $Q$-curvature. \\

In order to study deformations of scalar curvature, the central idea of \cite{F-M} is to investigate the kernel of $L^2$-formal adjoint for the linearization of scalar curvature. To be precise, regard the scalar curvature $R(g)$ as a second-order nonlinear map on the space of all metrics on $M$, 
\begin{align*}
\notag
R: &\mathcal{M} \rightarrow C^{\infty}(M); \ 
g\mapsto R_{g}.
\end{align*}
Let $\gamma_g : S_2(M) \rightarrow C^\infty(M)$ be its linearization at $g$ and $\gamma_g^*: C^\infty(M) \rightarrow S_2(M)$ be the $L^2$-formal adjoint of $\gamma_g$, where $S_2(M)$ is the space of symmetric $2$-tensors on $M$. \\

A crucial related concept is the so-called \emph{vacuum static space}, which is the spatial slice of a type of special solutions to \emph{vacuum Einstein equations} (see \cite{Q-Y}). A vacuum static space can also be defined as a complete Riemannian manifold with $\ker \gamma_g^* \neq \{0\}$ (see the last section for an explicit definition). Typical examples of vacuum static spaces are space forms. In this sense, we can regard the notion of vacuum static spaces as a generalization of space forms. Of course, there are many other interesting examples of vacuum static spaces besides space forms, say $S^1\times S^2$ \emph{etc}. 
 The classification problem is a fundamental question in the study of vacuum static spaces, even in the field of mathematical general relativity. We refer the article \cite{Q-Y} for readers who are interested in it.\\

In fact, being vacuum static or not is the criterion to determine whether the scalar curvature is being linearized stable at this given metric. It was shown in \cite{F-M} that a closed non-vacuum static manifold is linearized stable and hence any smooth function sufficiently close to the scalar curvature of the background metric can be realized as the scalar curvature of a nearby metric. This result was generalized to non-vacuum static domains by Corvino (\cite{Corvino}). On the other hand, for a vacuum static space, rigidity results are expected. In \cite{F-M}, authors also showed that there is a local rigidity phenomenon on any torus. Later, Schoen-Yau and Gromov-Lawson (\cite{S-Y_1, S-Y_2, G-L_1, G-L_2}) showed that global rigidity also holds on torus. Inspired by the \emph{Positive Mass Theorem}, many people made a lot of progress in understanding the rigidity phenomena of vacuum static spaces (c.f. \cite{Min-Oo, A-D, A-C-G, Miao, S-T, B-M}). It was demonstrated that local rigidity is a universal phenomena in all vacuum static spaces (see \cite{Q-Y_1}).  \\

Along the same line, we can also consider $Q$-curvature as a $4^{th}$-order nonlinear map on the space of all metrics on $M$, 
\begin{align}
\notag
Q: &\mathcal{M} \rightarrow C^{\infty}(M); g\mapsto Q_g.
\end{align}
Due to the complexity of $Q$-curvature, this map is extremely difficult to study. However, by linearizing the map we may still expect some interesting results to hold. We denote $$\Gamma_g : S_2(M) \rightarrow C^\infty(M)$$ to be the linearization of $Q$-curvature at metric $g$. \\


Now we give the following notion of $Q$-singular space introduced by Chang-Gursky-Yang in \cite{C-G-Y}, which plays a crucial role in this article.

\begin{definition}[Chang-Gursky-Yang \cite{C-G-Y}]
We say a complete Riemannian manifold $(M, g)$ is $Q$-singular, if $$\ker \Gamma_g^* \neq \{ 0 \},$$ where
$\Gamma_g^* : C^\infty(M) \rightarrow S_2(M)$ is the $L^2$-formal adjoint of $\Gamma_g$. We refer the triple $(M, g, f)$ as a $Q$-singular space as well, if $f (\not\equiv 0)$ is in the kernel of $\Gamma_g^*$.
\end{definition}

By direct calculations, we can obtain the precise expression of $\Gamma_g^*$ (see Proposition \ref{prop:Gamma^*}). Then follow their argument, we observed that the results in \cite{C-G-Y} for dimension $4$ can be extended to any other dimensions:

\begin{theorem}[Chang-Gursky-Yang \cite{C-G-Y}]\label{thm:Q_const}
A $Q$-singular space $(M^n, g)$ has constant $Q$-curvature and 
\begin{align}
\frac{n+4}{2}Q_g \in Spec(P_g).
\end{align}

\end{theorem}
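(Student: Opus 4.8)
The plan is to convert the tensorial equation $\Gamma_g^*(f)=0$ (for some $f\not\equiv 0$) into two scalar consequences by taking its divergence and its trace: the divergence will force $Q_g$ to be constant, while the trace will exhibit $f$ as a Paneitz eigenfunction. Both reductions can in principle be ground out from the explicit expression for $\Gamma_g^*$ in Proposition \ref{prop:Gamma^*}, but the structural shortcuts below bypass most of the fourth-order bookkeeping.

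First I would record the divergence (``contracted Bianchi'') identity
\[
\mathrm{div}\big(\Gamma_g^*(f)\big)=-\tfrac12\, f\, dQ_g .
\]
This is a formal consequence of the diffeomorphism invariance of $Q$-curvature: differentiating $Q_{\varphi_t^*g}=\varphi_t^*Q_g$ along the flow of a vector field $X$ gives $\Gamma_g(\mathcal{L}_X g)=\langle dQ_g,X\rangle$, and pairing this with $f$ in $L^2$, passing to the adjoint $\Gamma_g^*$, and integrating by parts yields the displayed formula for every $X$. (Equivalently, one verifies it directly from Proposition \ref{prop:Gamma^*}, using the second Bianchi identity to cancel the leading terms.) Feeding in $\Gamma_g^*(f)=0$ gives $f\,dQ_g\equiv 0$. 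Because $f$ satisfies a fourth-order elliptic equation — made explicit by the trace identity below, namely $P_g f=\tfrac{n+4}{2}Q_g f$ — unique continuation prevents $f$ from vanishing on any nonempty open set; hence $\{f\neq 0\}$ is open and dense, so $dQ_g\equiv 0$ and $Q_g$ is constant.

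Next I would take the trace. Pairing $\Gamma_g^*(f)$ with a pure-trace variation $h=\phi g$ and using the conformal linearization $\Gamma_g(\phi g)=\tfrac12 P_g\phi-\tfrac{n+4}{4}Q_g\phi$ — obtained by differentiating the conformal transformation law of $Q$-curvature recorded in the appendix — together with the self-adjointness of $P_g$, gives the trace identity $\mathrm{tr}_g\Gamma_g^*(f)=\tfrac12 P_g f-\tfrac{n+4}{4}Q_g f$. Setting $\Gamma_g^*(f)=0$ we read off $P_g f=\tfrac{n+4}{2}Q_g f$; since $Q_g$ is now a constant and $f\not\equiv 0$, this says precisely that $\tfrac{n+4}{2}Q_g\in \mathrm{Spec}(P_g)$. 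I expect the main obstacle to be the divergence step: confirming that the many fourth-order and curvature-coupling terms in $\Gamma_g^*(f)$ collapse to the single term $-\tfrac12 f\,dQ_g$ (whether through the naturality argument or through explicit Bianchi cancellations), together with the unique-continuation input needed to upgrade the constancy of $Q_g$ from $\{f\neq 0\}$ to all of $M$.
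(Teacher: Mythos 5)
Your proposal is correct and is essentially the paper's own proof: both extract $f\,dQ_g \equiv 0$ from the divergence identity given by diffeomorphism invariance of $Q$ (the paper writes it as $f\,dQ_g = 2\delta_g\Gamma_g^*f$, which with the convention $\delta_g = -\mathrm{div}_g$ is exactly your $\mathrm{div}(\Gamma_g^*f) = -\tfrac12 f\,dQ_g$), both invoke Aronszajn-type unique continuation for the fourth-order equation $\mathscr{L}_g f = \tfrac12\bigl(P_g - \tfrac{n+4}{2}Q_g\bigr)f = 0$ to rule out $f$ vanishing on an open set and thereby upgrade $f\,dQ_g\equiv 0$ to $dQ_g \equiv 0$, and both then read off $\tfrac{n+4}{2}Q_g \in \mathrm{Spec}(P_g)$ from the trace identity. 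The only difference is in how a supporting lemma is established: you derive the trace identity $tr_g\Gamma_g^*f = \tfrac12 P_g f - \tfrac{n+4}{4}Q_g f$ by pairing with pure-trace variations $h = \phi g$ and differentiating the conformal transformation law, whereas the paper obtains it by directly tracing the explicit expression for $\Gamma_g^*$ in Proposition \ref{prop:Gamma^*}; both derivations are valid, and yours is arguably cleaner since it avoids the coefficient bookkeeping.
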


As in the study of vacuum static spaces, the collection of all $Q$-singular spaces is also expected to be a ''small'' set in some sense, which lead us to the classification problems naturally. In fact, when restricted in the class of closed $Q$-singular Einstein manifolds with nonnegative scalar curvature, Ricci flat and spherical metrics are the only possible ones:

 \begin{theorem}\label{Classificaition_Q_singular_Einstein}
Suppose $(M^n,g,f)$ is a closed $Q$-singular Einstein manifold. If the scalar curvature $R_g \geq 0$, then
\begin{itemize}
\item $f$ is a non-vanishing constant if and only if $(M,g)$ is Ricci flat;
\item $f$ is not a constant if and only if $(M,g)$ is isometric to a round sphere with radius $r = \left( \frac{n(n-1)}{R_g}\right)^{\frac{1}{2}}$.
\end{itemize}
\end{theorem}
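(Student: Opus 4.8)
The plan is to combine the explicit formula for $\Gamma_g^*$ from Proposition~\ref{prop:Gamma^*} with Theorem~\ref{thm:Q_const}, and to read off two pieces of information from the tensor equation $\Gamma_g^* f = 0$ on an Einstein background. First I would substitute $Ric_g = \frac{R_g}{n} g$ into $\Gamma_g^*$. On such a background $R_g$ is constant, so $\nabla R_g = 0$ and $\Delta_g R_g = 0$, every term of $\Gamma_g^*$ carrying $\nabla Ric_g$ or $\nabla R_g$ drops out, and the $Q$-curvature becomes the constant
\begin{equation}\label{eqn:Q-Einstein}
Q_g = \left(\frac{B_n}{n}+C_n\right)R_g^2 = \frac{n^2 - 4}{8 n (n-1)^2}\, R_g^2 \geq 0 .
\end{equation}
I would then split $\Gamma_g^* f = 0$ into its $g$-trace and its trace-free part $E := \nabla^2 f - \frac{\Delta_g f}{n}\, g$. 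By the mechanism behind Theorem~\ref{thm:Q_const}, the trace yields the Paneitz eigenvalue equation
\begin{equation}\label{eqn:trace-eqn}
P_g f = \frac{n+4}{2}\, Q_g f ,
\end{equation}
which on the Einstein background is the scalar relation $\Delta_g^2 f - (a_n + \tfrac{b_n}{n}) R_g\, \Delta_g f = 4 Q_g f$.

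With \eqref{eqn:trace-eqn} in hand the \emph{constant case} is immediate: if $f \equiv c \neq 0$ then all derivatives vanish, so \eqref{eqn:trace-eqn} forces $\frac{n-4}{2}Q_g = \frac{n+4}{2}Q_g$, i.e. $Q_g = 0$, and by \eqref{eqn:Q-Einstein} this gives $R_g = 0$, hence $Ric_g = 0$ and $(M,g)$ is Ricci flat. This is one direction of the first bullet; the converse will follow from the dichotomy below.

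The content of the \emph{nonconstant case} lies in the trace-free part. The crucial simplification is that the zeroth-order tensor in $\Gamma_g^*$ always appears contracted against $f\,Ric_g$, which on an Einstein manifold is the pure-trace tensor $\frac{R_g}{n} f g$; since the Weyl tensor is trace-free, no Weyl contribution survives, and the trace-free equation should collapse to
\begin{equation}\label{eqn:tracefree}
A_n\left(\nabla^2 - \tfrac{\Delta_g}{n}\, g\right)(\Delta_g f) + \mu\, R_g\, E = 0
\end{equation}
for an explicit constant $\mu$. I would then pair \eqref{eqn:tracefree} with $E$ in $L^2$, integrate by parts using the Bochner identity $\int_M \langle \nabla^2 u, \nabla^2 v\rangle = \int_M (\Delta_g u)(\Delta_g v) - \int_M Ric_g(\nabla u, \nabla v)$, and invoke the scalar relation \eqref{eqn:trace-eqn} to eliminate the fourth-order term $\Delta_g^2 f$. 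The sign $A_n = -\frac{1}{2(n-1)} < 0$, together with $R_g \geq 0$ and $Q_g \geq 0$ from \eqref{eqn:Q-Einstein}, should combine to give an identity of the form $\int_M |E|^2 \cdot (\text{nonnegative coefficient}) = 0$, forcing $E \equiv 0$, i.e. $\nabla^2 f = \frac{\Delta_g f}{n}\, g$. Taking the divergence of this identity on the Einstein background yields $\Delta_g f = -\frac{R_g}{n-1} f + \text{const}$, and after absorbing the constant I obtain Obata's equation $\nabla^2 f + \frac{R_g}{n(n-1)} f\, g = 0$ with nonconstant $f$, whence $R_g > 0$ and $(M,g)$ is a round sphere of radius $r = \left(n(n-1)/R_g\right)^{1/2}$.

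The two converses are then free: on a Ricci-flat manifold a nonconstant $f$ would force a sphere by the previous step, a contradiction, so $f$ is a nonzero constant; on a round sphere a constant $f$ would force $Q_g = 0$ by the constant case, contradicting \eqref{eqn:Q-Einstein}, so $f$ is nonconstant. I expect the main obstacle to be the third step: producing the clean trace-free identity \eqref{eqn:tracefree} from the full expression of $\Gamma_g^*$ (verifying that the Weyl and $\nabla Ric_g$ terms genuinely disappear on the Einstein background) and, above all, checking that the integration-by-parts combination of \eqref{eqn:trace-eqn} and \eqref{eqn:tracefree} carries the definite sign required under $R_g \geq 0$ to conclude $E \equiv 0$.
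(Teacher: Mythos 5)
Your steps 1, 2 and the constant case coincide with the paper's argument (its Lemma \ref{Q-static_Einstein}, the trace identity $tr_g\,\Gamma_g^*f=\frac{1}{2}\left(P_g-\frac{n+4}{2}Q_g\right)f$, and Theorem \ref{Static_Ricci_flat}), and your formula for $Q_g$ on an Einstein background is correct. The gap is exactly at the step you flagged as the main obstacle: the $L^2$ pairing of the trace-free equation with $E$ can never produce $\int_M|E|^2\cdot(\text{positive})=0$, because that pairing is \emph{identically zero} for every $f$ once the trace equation is used. Indeed, on an Einstein background the trace-free part of $\Gamma_g^*f=0$ is precisely the statement that the trace-free Hessian of the auxiliary function $\varphi:=\Delta_g f+\Lambda_n R_g f$ vanishes, where $\Lambda_n=\frac{2}{A_n}\left(\frac{B_n}{n}+C_n\right)<0$ (so your constant is $\mu=A_n\Lambda_n$), while the trace part reads $\Delta_g\varphi=-\frac{R_g}{n-1}\varphi$. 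Pairing the trace-free part with $E$ and integrating by parts via Bochner on the Einstein background gives
\begin{equation*}
\int_M \Bigl\langle \nabla^2\varphi-\tfrac{1}{n}(\Delta_g\varphi)\,g,\ E\Bigr\rangle\, dv_g\;=\;\frac{n-1}{n}\int_M \Delta_g\varphi\,\Delta_g f\, dv_g\;+\;\frac{R_g}{n}\int_M \varphi\,\Delta_g f\, dv_g,
\end{equation*}
and substituting $\Delta_g\varphi=-\frac{R_g}{n-1}\varphi$ makes the right-hand side cancel term by term. Nothing proportional to $\int_M|E|^2$ survives, the signs of $A_n$, $R_g$, $Q_g$ never enter, and no conclusion about $E$ can be drawn. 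The degeneration is not an accident of the test tensor: the trace-free equation constrains the Hessian of $\varphi$, not of $f$, so it carries no direct information about $E$.

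The paper's proof avoids this entirely by never trying to show $\nabla^2 f$ is pure trace. It reads the two parts of the equation pointwise for $\varphi$: the trace-free part gives $\nabla^2\varphi=\frac{1}{n}(\Delta_g\varphi)\,g$, the trace gives $\Delta_g\varphi=-\frac{R_g}{n-1}\varphi$, hence Obata's equation $\nabla^2\varphi+\frac{R_g}{n(n-1)}\varphi\, g=0$ \emph{for $\varphi$}. For nonconstant $f$ one first rules out $R_g=0$ (Ricci flat forces $tr_g\,\Gamma_g^*f=\frac12\Delta_g^2f=0$, hence $f$ constant on a closed manifold), so $R_g>0$; then $\varphi\not\equiv0$, since $\varphi\equiv0$ would mean $-\Delta_g f=\Lambda_n R_g f$ with $\Lambda_n R_g<0$, impossible as $Spec(-\Delta_g)\subset[0,\infty)$. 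Lichnerowicz--Obata applied to $\varphi$ (not to $f$) then yields the round sphere of radius $\left(n(n-1)/R_g\right)^{1/2}$; this is Lemma \ref{Q-static_Einstein} combined with Proposition \ref{Q-static_sphere}, and your converse arguments go through unchanged from there. If you insist on applying Obata to $f$ itself, you need a genuinely different mechanism to establish $E\equiv0$; the integral argument you propose cannot supply it.
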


For non-$Q$-singular spaces, we can show that they are actually linearized stable, which answers locally prescribing $Q$-curvature problem for this category of manifolds:
\begin{theorem}\label{Q_stability}
Let $(M,\bar{g})$ be a closed Riemannian manifold. Assume
$(M,\bar{g})$ is not $Q$-singular, then the Q-curvature is linearized stable at $\bar g$ in the sense that $Q : \mathcal{M} \rightarrow C^{\infty}(M) $ is a submersion at $\bar{g}$.
Thus, there is a neighborhood $U \subset C^{\infty}(M)$ of $Q_{\bar{g}}$ such that for any $\psi \in U$, there exists a metric $g$ on $M$ closed to $\bar{g}$ with $Q_g = \psi$.
\end{theorem}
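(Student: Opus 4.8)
The plan is to follow the strategy of Fischer--Marsden, replacing the second-order operator $\gamma_g$ by our fourth-order operator $\Gamma_g$. To prove that $Q$ is a submersion at $\bar g$ it suffices, by the local submersion theorem in Banach spaces, to show that the linearization $\Gamma_{\bar g}\colon S_2(M)\to C^\infty(M)$ is surjective and admits a bounded right inverse after passing to suitable H\"older (or Sobolev) completions. The standard device for producing such a right inverse is to study the composition $\Gamma_{\bar g}\Gamma_{\bar g}^*\colon C^\infty(M)\to C^\infty(M)$: if this operator is an isomorphism, then $\Gamma_{\bar g}^*(\Gamma_{\bar g}\Gamma_{\bar g}^*)^{-1}$ is a right inverse for $\Gamma_{\bar g}$. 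Thus the whole theorem reduces to the invertibility of $\Gamma_{\bar g}\Gamma_{\bar g}^*$, and this is where the hypothesis that $(M,\bar g)$ is \emph{not} $Q$-singular enters.

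First I would determine the principal symbol of $\Gamma_g$ from the explicit expression for $\Gamma_g^*$ in Proposition \ref{prop:Gamma^*}. Since the terms $B_n|Ric_g|^2$ and $C_nR_g^2$ are only second order in $g$, the fourth-order part of $\Gamma_g$ comes entirely from $A_n\Delta_g R_g$, so its leading symbol is $A_n|\xi|^2$ times the (second-order) symbol $L_\xi(h)=-|\xi|^2\,\mathrm{tr}_g h+h(\xi^\sharp,\xi^\sharp)$ of the linearized scalar curvature. Evaluating $L_\xi$ on $h=g$ gives $-(n-1)|\xi|^2\neq0$ for $\xi\neq0$, so $L_\xi$, and hence $\sigma_\xi(\Gamma_g)$, is surjective onto $\mathbb{R}$; equivalently $\Gamma_g^*$ is overdetermined elliptic. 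A direct computation of $L_\xi L_\xi^*$ then yields $\sigma_\xi(\Gamma_g\Gamma_g^*)=A_n^2(n-1)|\xi|^8>0$, so $\Gamma_g\Gamma_g^*$ is a self-adjoint elliptic operator of order eight with positive principal symbol. I expect this symbol computation to be the technical heart of the argument, though it is manageable precisely because the top-order behavior is governed by the classical scalar-curvature symbol rather than by the full complexity of $Q$.

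With ellipticity in hand, I would set up the functional analysis on the closed manifold $M$, regarding $\Gamma_{\bar g}\Gamma_{\bar g}^*$ as a map $C^{k+8,\alpha}(M)\to C^{k,\alpha}(M)$. As a self-adjoint elliptic operator it is Fredholm of index zero, and elliptic regularity forces every element of its kernel to be smooth. The integration-by-parts identity $\langle\Gamma_{\bar g}\Gamma_{\bar g}^*f,f\rangle_{L^2}=\|\Gamma_{\bar g}^*f\|_{L^2}^2$ shows $\ker(\Gamma_{\bar g}\Gamma_{\bar g}^*)=\ker\Gamma_{\bar g}^*$, which is trivial by the non-$Q$-singular hypothesis. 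Index zero together with trivial kernel makes $\Gamma_{\bar g}\Gamma_{\bar g}^*$ an isomorphism, so $\Gamma_{\bar g}^*(\Gamma_{\bar g}\Gamma_{\bar g}^*)^{-1}\colon C^{k,\alpha}(M)\to C^{k+4,\alpha}(M)$ is a bounded right inverse of $\Gamma_{\bar g}$; in particular $\Gamma_{\bar g}$ is surjective and its kernel splits.

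Finally I would conclude by invoking the implicit function theorem in Banach spaces. One checks that $Q$, viewed on the Banach manifold of $C^{k+4,\alpha}$ metrics, is $C^1$ (indeed smooth) with differential $\Gamma_{\bar g}$ at $\bar g$; since this differential is surjective with splitting kernel, $Q$ is a submersion near $\bar g$, hence open, giving a neighborhood $U$ of $Q_{\bar g}$ in which every $\psi$ is attained as $Q_g$ for some metric $g$ close to $\bar g$. A final elliptic-regularity bootstrap shows that if $\psi$ is smooth then the solution metric may be taken smooth. Beyond the symbol computation, the points requiring the most care are the identification $\ker(\Gamma_{\bar g}\Gamma_{\bar g}^*)=\ker\Gamma_{\bar g}^*$ together with its attendant regularity statement, and the verification that $Q$ is differentiable between the chosen function spaces.
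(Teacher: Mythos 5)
Your proposal is correct, and its geometric core --- the observation that the fourth-order part of $\Gamma_{\bar g}$ is $A_n|\xi|^2$ times the symbol of the linearized scalar curvature (equivalently, that $\Gamma_{\bar g}^*$ has injective principal symbol; your symbol computations, including $\sigma_\xi(\Gamma_{\bar g}\Gamma_{\bar g}^*)=A_n^2(n-1)|\xi|^8$, check out against Proposition \ref{prop:Gamma^*}), combined with the non-$Q$-singular hypothesis to kill $\ker\Gamma_{\bar g}^*$ --- is exactly the paper's. Where you diverge is the functional analysis. The paper cites two black boxes: the Berger--Ebin Splitting Theorem (Theorem \ref{Splitting_Theorem}), which yields $C^\infty(M)=\Ima \Gamma_{\bar g}\oplus\ker\Gamma^*_{\bar g}$ directly from the injective symbol of $\Gamma_{\bar g}^*$, and Gel'man's generalized inverse function theorem (Theorem \ref{Generalized_Inverse_Function_Theorem}), which needs only a surjective bounded derivative and no splitting of the kernel. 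You instead unwind the splitting theorem in the case at hand: you show $\Gamma_{\bar g}\Gamma^*_{\bar g}$ is elliptic, formally self-adjoint of index zero, with $\ker(\Gamma_{\bar g}\Gamma^*_{\bar g})=\ker\Gamma^*_{\bar g}=\{0\}$ by integration by parts plus elliptic regularity, hence an isomorphism on H\"older spaces, and from it you build the explicit bounded right inverse $\Gamma^*_{\bar g}(\Gamma_{\bar g}\Gamma^*_{\bar g})^{-1}$, after which the standard Banach-space submersion theorem applies. Your route buys three things: it is self-contained (the two cited theorems are replaced by standard elliptic theory), it sidesteps the awkward point that the paper applies a Banach-space inverse function theorem to the Fr\'echet space $C^\infty(M)$, and the right inverse factoring through $\Gamma^*_{\bar g}$ is precisely what makes the final smoothness claim provable; the paper's route buys brevity. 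One suggestion to tighten the last step: rather than invoking the abstract submersion theorem and then bootstrapping, run the inverse function theorem directly on the map $u\mapsto Q(\bar g+\Gamma^*_{\bar g}u)$, whose linearization at $u=0$ is the isomorphism $\Gamma_{\bar g}\Gamma^*_{\bar g}$; then every solution produced has the form $\bar g+\Gamma^*_{\bar g}u$, the equation for $u$ is quasilinear elliptic near $u=0$, and smoothness of $\psi$ gives smoothness of $u$, hence of the metric, immediately.
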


In particular, with the aid of Theorem \ref{Classificaition_Q_singular_Einstein}, we know a generic Einstein metric with positive scalar curvature has to be linearized stable:

\begin{corollary}\label{cor:stab_pos_Einstein}
Let $(M,\bar{g})$ be a closed positive Einstein manifold. Assume $(M,\bar{g})$ is not spherical, then the Q-curvature is linearized stable at $\bar g$.
\end{corollary}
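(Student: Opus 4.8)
The plan is to obtain this corollary as an immediate consequence of the two preceding theorems through a contradiction argument. Theorem \ref{Q_stability} already establishes linearized stability for any closed manifold that is \emph{not} $Q$-singular, so it suffices to verify that a closed positive Einstein manifold which is not spherical cannot be $Q$-singular. I would therefore assume the contrary and derive a contradiction from the classification in Theorem \ref{Classificaition_Q_singular_Einstein}.

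First I would suppose that $(M,\bar{g})$ is $Q$-singular. By definition this produces a function $f \not\equiv 0$ with $f \in \ker \Gamma_{\bar{g}}^*$, so that $(M,\bar{g},f)$ is a closed $Q$-singular Einstein space. Since $\bar{g}$ is positive Einstein, its Ricci tensor satisfies $Ric_{\bar{g}} = \lambda \bar{g}$ with $\lambda > 0$, whence $R_{\bar{g}} = n\lambda > 0 \geq 0$. Thus all the hypotheses of Theorem \ref{Classificaition_Q_singular_Einstein} are in force.

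Next I would split into the two exhaustive cases governed by that theorem. If the kernel element $f$ is constant, then, being nonzero, it is a non-vanishing constant, and Theorem \ref{Classificaition_Q_singular_Einstein} forces $(M,\bar{g})$ to be Ricci flat; but Ricci flatness gives $R_{\bar{g}} = 0$, contradicting $R_{\bar{g}} > 0$. If instead $f$ is non-constant, the same theorem forces $(M,\bar{g})$ to be isometric to a round sphere, contradicting the standing hypothesis that $(M,\bar{g})$ is not spherical. In either case we reach a contradiction, so $(M,\bar{g})$ is not $Q$-singular, and Theorem \ref{Q_stability} delivers the desired linearized stability.

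Since the two input theorems carry all of the analytic and geometric weight, I do not expect a genuine obstacle here; the proof is purely a matter of assembling the dichotomy correctly. The only points requiring care are that the case distinction on $f$ is truly exhaustive (a nonzero smooth function is either a non-vanishing constant or else non-constant) and that ``positive Einstein'' supplies the strict inequality $R_{\bar{g}} > 0$, which is exactly what is needed to eliminate the Ricci-flat alternative.
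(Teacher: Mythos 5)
Your proposal is correct and follows essentially the same route as the paper: the paper likewise deduces non-$Q$-singularity from Theorem \ref{Classificaition_Q_singular_Einstein} (ruling out the Ricci-flat case via $R_{\bar g}>0$ and the spherical case by hypothesis) and then invokes Theorem \ref{Q_stability}. Your version merely makes the exhaustive dichotomy on $f$ and the contradiction structure explicit, which the paper leaves implicit.
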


Theorem \ref{Q_stability} actually provides an answer to global prescribing $Q$-curvature problem:
\begin{corollary}\label{cor:prescribing_zero_Q}
Suppose $(M, \bar g)$ is a closed non-$Q$-singular manifold with vanishing $Q$-curvature. Then any smooth function $\varphi$ can be realized as a $Q$-curvature for some metric $ g$ on $M$.
\end{corollary}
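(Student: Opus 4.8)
The plan is to bootstrap the local surjectivity of Theorem \ref{Q_stability} into a global statement by exploiting the scaling homogeneity of the $Q$-curvature. The one computation that makes everything work is how $Q$ transforms under a constant rescaling $g \mapsto \lambda g$, $\lambda > 0$. Since constant scaling leaves the Levi-Civita connection and hence the $(0,2)$-Ricci tensor unchanged, while $\Delta_{\lambda g} = \lambda^{-1}\Delta_g$, $R_{\lambda g} = \lambda^{-1}R_g$, and $|T|^2_{\lambda g} = \lambda^{-2}|T|^2_g$ for a $(0,2)$-tensor $T$, each of the three terms in Branson's formula $Q_g = A_n \Delta_g R_g + B_n |Ric_g|^2_g + C_n R_g^2$ scales in the same way, yielding
\begin{align*}
Q_{\lambda g} = \lambda^{-2} Q_g, \qquad \lambda > 0 \text{ constant}.
\end{align*}
This identity is elementary to verify from the defining formula, but it is the key structural fact underlying the whole argument.

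Next I would invoke the hypotheses. Because $(M,\bar g)$ is non-$Q$-singular, Theorem \ref{Q_stability} supplies a neighborhood $U \subset C^\infty(M)$ of $Q_{\bar g}$ such that every $\psi \in U$ is realized as $Q_{g'} = \psi$ for some metric $g'$ near $\bar g$. The crucial point is that $Q_{\bar g} \equiv 0$, so $U$ is a neighborhood of the \emph{zero} function and therefore contains every sufficiently small smooth function. Given an arbitrary target $\varphi \in C^\infty(M)$, I would choose a constant $\lambda > 0$ small enough that $\lambda^2 \varphi \in U$; this is always possible since multiplication by $\lambda^2$ scales every $C^k$-seminorm by $\lambda^2$, so $\lambda^2 \varphi \to 0$ in $C^\infty(M)$ as $\lambda \to 0^+$.

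With $\lambda$ fixed, Theorem \ref{Q_stability} produces a metric $g'$ close to $\bar g$ with $Q_{g'} = \lambda^2 \varphi$. Setting $g = \lambda g'$ and applying the scaling law gives
\begin{align*}
Q_g = Q_{\lambda g'} = \lambda^{-2} Q_{g'} = \lambda^{-2}\bigl(\lambda^2 \varphi\bigr) = \varphi,
\end{align*}
so $\varphi$ is realized as the $Q$-curvature of $g$. I do not expect any serious obstacle: the only points requiring care are the verification of the scaling identity and the observation that scaling compresses functions toward the origin in the $C^\infty$ topology, which is precisely what upgrades the local surjectivity to a global one. It is essential that $Q_{\bar g}$ vanishes, since this places $0$ at the center of the neighborhood $U$ and is exactly the hypothesis that allows an arbitrary $\varphi$ — not merely a small perturbation — to be reached.
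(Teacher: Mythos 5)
Your proposal is correct and is essentially the paper's own argument: both use Theorem \ref{Q_stability} to obtain a $C^\infty$-neighborhood of $Q_{\bar g}=0$ lying in the image of $Q$, and then exploit the scaling identity $Q_{\lambda g}=\lambda^{-2}Q_g$ to pull an arbitrary $\varphi$ back into that neighborhood and rescale the resulting metric. The only difference is notational (the paper writes the scaling factor as $u_{\varepsilon_0,\varphi}^{1/2}$ where you write $\lambda$), so there is nothing to add.
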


As a direct corollary of Theorem \ref{Q_stability}, we can obtain the existence of a negative $Q$-curvature metric as the following: 
\begin{corollary}\label{cor:pos_Y_negative_Q}
Let $M$ be a closed manifold with positive Yamabe invariant $Y(M) > 0$. There is a metric $g$ with $Q$-curvature $Q_g < 0$ on $M$. 
\end{corollary}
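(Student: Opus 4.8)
The plan is to combine the classical theory of prescribed scalar curvature with the linearized stability result of Theorem \ref{Q_stability}. Since $Y(M) > 0$, the manifold $M$ admits a metric of positive scalar curvature; by the Kazdan--Warner trichotomy $M$ then lies in their class (A), so every smooth function---in particular the zero function---is realized as the scalar curvature of some metric. Thus I would first fix a scalar-flat metric $g_0$, i.e.\ $R_{g_0} \equiv 0$.

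For such a $g_0$ the $\Delta_g R_g$ and $R_g^2$ terms in Branson's formula vanish identically, leaving
\begin{align*}
Q_{g_0} = B_n \, |Ric_{g_0}|_{g_0}^2 = -\frac{2}{(n-2)^2}\,|Ric_{g_0}|_{g_0}^2 \le 0,
\end{align*}
so $g_0$ already has non-positive $Q$-curvature, and the task reduces to pushing it strictly below zero. Here I would observe that $g_0$ cannot be Ricci-flat: a closed manifold carrying a Ricci-flat metric admits no metric of positive scalar curvature (it falls into Kazdan--Warner's class (B), which is disjoint from class (A)), contradicting $Y(M) > 0$. Hence $|Ric_{g_0}|_{g_0}^2 \not\equiv 0$ and $Q_{g_0}$ is non-positive but not identically zero.

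Next I would split into two cases according to whether $g_0$ is $Q$-singular. If $g_0$ is $Q$-singular, then by Theorem \ref{thm:Q_const} its $Q$-curvature is constant; being $\le 0$ and not identically zero, that constant is strictly negative, so $g = g_0$ already satisfies $Q_g < 0$. If instead $g_0$ is not $Q$-singular, Theorem \ref{Q_stability} applies: $Q$ is a submersion at $g_0$, so there is a neighborhood $U$ of $Q_{g_0}$ in $C^\infty(M)$ every element of which is the $Q$-curvature of a nearby metric. For small $\epsilon > 0$ the function $\psi = Q_{g_0} - \epsilon$ lies in $U$ and satisfies $\psi \le -\epsilon < 0$; the metric $g$ realizing $\psi$ then has $Q_g < 0$ everywhere.

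The routine parts are the computation of $Q_{g_0}$ and the constant-shift perturbation. The main obstacle is the interplay with the Ricci-flat/$Q$-singular borderline: one must rule out the degenerate possibility $Q_{g_0}\equiv 0$, which is exactly where the positive-scalar-curvature obstruction---Ricci-flat metrics cannot coexist with $Y(M) > 0$---enters, and then separately secure strict negativity in the $Q$-singular case via Theorem \ref{thm:Q_const}. Making the exclusion of Ricci-flat metrics precise, rather than merely generic, is the step I would be most careful about.
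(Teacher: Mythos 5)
Your overall strategy---produce a scalar-flat metric from $Y(M)>0$, then split into the $Q$-singular case (where Theorem \ref{thm:Q_const} forces $Q_{g_0}$ to be a strictly negative constant) and the non-$Q$-singular case (where Theorem \ref{Q_stability} realizes $Q_{g_0}-\epsilon$)---is essentially the paper's strategy, and those two steps are correct. The gap is in the step you yourself flagged: ruling out that your scalar-flat metric $g_0$ is Ricci-flat. You claim that a closed manifold carrying a Ricci-flat metric admits no metric of positive scalar curvature ``because it falls into Kazdan--Warner's class (B).'' That is not what the trichotomy says, and it is not a known theorem. The trichotomy gives the implication in the other direction only: on a class (B) manifold, every scalar-flat metric is Ricci-flat (otherwise one could perturb to positive scalar curvature and the manifold would be in class (A)). Nothing in Kazdan--Warner prevents a class (A) manifold from also carrying a Ricci-flat metric; such a manifold would simply sit in class (A), its Ricci-flat metric being one particular scalar-flat metric among many. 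Whether a closed manifold can carry both a Ricci-flat metric and a positive scalar curvature metric is in fact a long-standing open problem (known only for the standard examples of Ricci-flat manifolds, e.g.\ flat manifolds and manifolds with parallel spinors, via minimal-surface or index-theoretic obstructions). And this unproven exclusion is exactly the case your argument cannot absorb: if $g_0$ were Ricci-flat, it would be $Q$-singular with $Q_{g_0}\equiv 0$, so neither of your two cases produces anything, and Theorem \ref{Ricci_flat_stability} is of no help either, since it only realizes zero-average functions, which are never strictly negative.

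The paper closes precisely this hole by citing Matsuo's theorem (Corollary 2 in \cite{Mat14}): on any closed manifold of dimension $n\ge 3$ with positive Yamabe invariant there exists a metric with $R_g\equiv 0$ and $Ric_g\not\equiv 0$. This hands you the scalar-flat, non-Ricci-flat metric directly, without having to decide whether some other scalar-flat metric on $M$ could be Ricci-flat. With $g_0$ replaced by Matsuo's metric, your two-case argument goes through verbatim and coincides with the paper's proof (the paper phrases the dichotomy as $|Ric_g|>0$ everywhere versus $|Ric_g|(p)=0$ at some point, but that is equivalent to your $Q$-singular versus non-$Q$-singular split).
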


We also investigate the stability of Ricci flat metrics, which are in fact $Q$-singular. Due to the speciality of such metrics, we can give a sufficient condition for prescribing $Q$-curvature problem.
\begin{theorem}\label{Ricci_flat_stability}
Let $(M,\bar{g})$ be a closed Ricci flat Manifold. Denote $$\Phi:=\{\psi \in C^{\infty}(M): \int_M \psi dv_{\bar{g}} = 0\}$$ to be the set of smooth functions with zero average. Then for any $\psi \in \Phi$, there exists a metric $g$ on $M$ such that $$Q_g = \psi.$$

\end{theorem}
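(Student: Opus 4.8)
The plan is to exploit the very explicit form that the linearization $\Gamma_{\bar g}$ takes at a Ricci flat metric in order to reduce the prescription problem to a one-dimensional obstruction created by $Q$-singularity, and then to remove the size restriction via the conformal weight of $Q$-curvature under scaling.

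First I would record the linearization. Since $\bar g$ is Ricci flat we have $R_{\bar g}=0$, $Ric_{\bar g}=0$, and hence $Q_{\bar g}=0$. Differentiating $Q_g=A_n\Delta_g R_g+B_n|Ric_g|_g^2+C_nR_g^2$ and dropping every term that carries an undifferentiated factor of $R_{\bar g}$ or $Ric_{\bar g}$ leaves
\[
\Gamma_{\bar g}h=A_n\,\Delta_{\bar g}\big(\gamma_{\bar g}h\big),
\]
where $\gamma_{\bar g}$ is the linearized scalar curvature. Testing this on conformal directions $h=u\bar g$, the standard conformal variation $\gamma_{\bar g}(u\bar g)=(1-n)\Delta_{\bar g}u$ together with $A_n(1-n)=\tfrac12$ collapses everything to the clean identity $\Gamma_{\bar g}(u\bar g)=\tfrac12\Delta_{\bar g}^2u$. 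Because $\Delta_{\bar g}^2$ maps $C^\infty(M)$ onto $\Phi=\Ima\Delta_{\bar g}$, this already shows $\Ima\Gamma_{\bar g}=\Phi$; dually, $\ker\Gamma_{\bar g}^*$ consists exactly of the constants (in agreement with Theorem \ref{Classificaition_Q_singular_Einstein}), so the cokernel of $\Gamma_{\bar g}$ is the single direction of constant functions. Throughout I fix the $L^2(dv_{\bar g})$-orthogonal splitting $C^\infty(M)=\Phi\oplus\mathbb R$ and let $\pi$ denote the projection onto $\Phi$, i.e.\ subtraction of the mean.

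Next I would solve the problem up to a constant. Since $\pi\circ\Gamma_{\bar g}=\Gamma_{\bar g}\colon S_2(M)\to\Phi$ is onto and admits a bounded right inverse, the map $g\mapsto\pi Q_g$ is a submersion at $\bar g$ in the appropriate H\"older or Sobolev completions, so the implicit function theorem produces, for every small $\psi\in\Phi$, a metric $g_\psi$ near $\bar g$ with $\pi Q_{g_\psi}=\psi$; that is, $Q_{g_\psi}=\psi+c(\psi)$ where $c(\psi)=V^{-1}\int_M Q_{g_\psi}\,dv_{\bar g}$ and $V=\mathrm{Vol}_{\bar g}(M)$. As $Q_{\bar g}=0$ and $g_\psi=\bar g+O(\psi)$, Taylor expansion gives $c(\psi)=O(\|\psi\|^2)$. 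The whole content of the theorem is now to kill the constant $c(\psi)$, i.e.\ the single $Q$-singular (cokernel) direction. To do so I would run a Lyapunov--Schmidt reduction: introduce auxiliary parameters $t=(t_1,\dots,t_N)$ along fixed directions $e_1,\dots,e_N$, solve the $\Phi$-component by the implicit function theorem to obtain $g=g(\psi,t)$, and reduce to the scalar equation $\mathcal C(\psi,t):=V^{-1}\int_M Q_{g(\psi,t)}\,dv_{\bar g}=0$. Because the image of $\Gamma_{\bar g}$ lies in $\Phi$, all first derivatives of $\mathcal C$ vanish at the origin, so $\mathcal C(0,t)=\tfrac12\sum_{i,j}a_{ij}t_it_j+O(|t|^3)$ with $a_{ij}=V^{-1}\int_M Q''_{\bar g}[e_i,e_j]\,dv_{\bar g}$; if this quadratic form is indefinite it has a regular zero $t^0\ne0$, and the implicit function theorem in $t$ near $t^0$ then solves $\mathcal C(\psi,t)=0$ for all small $\psi$, yielding a genuine solution of $Q_g=\psi$.

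The hard part is exactly the indefiniteness of $(a_{ij})$, and this is where I expect the main obstacle to lie. Along conformal directions a direct computation gives $V^{-1}\int_M Q''_{\bar g}[u\bar g,u\bar g]\,dv_{\bar g}=-\kappa_n\int_M(\Delta_{\bar g}u)^2\,dv_{\bar g}<0$ for a positive constant $\kappa_n$; this negativity is precisely the mechanism behind the rigidity of one-signed prescriptions and marks the departure from the scalar-curvature setting of Fischer--Marsden. One therefore has to exhibit transverse directions, e.g.\ transverse--traceless tensors, on which the second variation of the mean is positive, so that the reduced quadratic form changes sign. Finally I would remove the smallness assumption by the scaling covariance $Q_{\lambda^2 g}=\lambda^{-4}Q_g$: given an arbitrary $\psi\in\Phi$, the function $\varepsilon\psi$ again lies in $\Phi$ and is small for $\varepsilon>0$ small, so the local result furnishes $g$ with $Q_g=\varepsilon\psi$, and rescaling by $\lambda=\varepsilon^{1/4}$ gives $Q_{\lambda^2 g}=\lambda^{-4}\varepsilon\psi=\psi$. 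The trivial case $\psi\equiv0$ is handled by $g=\bar g$, completing the proof for every $\psi\in\Phi$.
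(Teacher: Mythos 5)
Your opening and closing steps match the paper's proof: the identification $\Ima \Gamma_{\bar g} = \Phi$ and $\ker\Gamma^*_{\bar g} = \mathbb{R}$ (which the paper obtains from the Splitting Theorem \ref{Splitting_Theorem} together with Theorem \ref{Static_Ricci_flat}, and you obtain more explicitly from the identity $\Gamma_{\bar g}h = A_n\Delta_{\bar g}(\gamma_{\bar g}h)$), and the final rescaling $Q_{\lambda^2 g}=\lambda^{-4}Q_g$ that removes the smallness restriction. The middle is where you diverge: the paper treats $\Phi$ itself as the target, declares $Q$ a submersion onto $\Phi$, and concludes exact local surjectivity onto small elements of $\Phi$ directly from the Generalized Inverse Function Theorem \ref{Generalized_Inverse_Function_Theorem}; it never introduces your error constant $c(\psi)$, so it has nothing left to kill. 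You instead insist (not unreasonably, given how terse that step of the paper is) that the inverse function theorem only yields $Q_{g_\psi}=\psi+c(\psi)$, and you set up a Lyapunov--Schmidt reduction to remove the constant.

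That reduction is where your proposal has a genuine gap, and the mechanism you suggest for closing it provably fails. You need the reduced form $a_{ij}=V^{-1}\int_M Q''_{\bar g}[e_i,e_j]\,dv_{\bar g}$ to be indefinite, and you hope to find positivity along transverse--traceless directions. The paper's own second-variation computation rules this out: by Lemma \ref{2nd_variation_flat}, at a flat $\bar g$ (e.g.\ a flat torus, a legitimate instance of this theorem), for every divergence-free $h$ one has
\begin{align*}
\int_M Q''_{\bar g}[h,h]\,dv_{\bar g} \;=\; -2\alpha_n\int_M|\Delta (tr\, h)|^2\,dv_{\bar g} \;+\; \frac{1}{2}B_n\int_M|\Delta\overset{\circ}{h}|^2\,dv_{\bar g}\;\le\;0,
\end{align*}
since $\alpha_n>0$ and $B_n<0$; on TT tensors this is $\tfrac12 B_n\int_M|\Delta h|^2\,dv_{\bar g}\le 0$, never positive. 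So the form is negative semi-definite on the entire divergence-free slice, and no choice of divergence-free $e_i$ can make $(a_{ij})$ indefinite. By the canonical decomposition (Corollary \ref{Canonical_decomposition_S_2}), the only remaining directions are pure-gauge ones $L_X\bar g$, whose diagonal contribution also vanishes (because $Q_{\varphi^*\bar g}=0$ and $\Ima\Gamma_{\bar g}\subset\Phi$); the only conceivable source of indefiniteness is the off-diagonal coupling $\int_M D^2Q_{\bar g}(L_X\bar g,h)\,dv_{\bar g}=-\int_M \Gamma_{\bar g}h\; div\, X\, dv_{\bar g}$ produced by the fixed volume form $dv_{\bar g}$, i.e.\ the freedom of composing with diffeomorphisms, and along your solution branch this term is of size $O(\|\psi\|)$, so exploiting it would require a degenerate, quantitative version of the implicit function theorem. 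None of this appears in your write-up: the step you yourself flag as the hard part is both unproven and unprovable by the route you indicate, so the proposal does not establish the theorem.
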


On the other hand, we are interested in the question of how much the generic stability fails for flat metrics. Applying perturbation analysis, we discovered the positivity of $Q$-curvature is the obstruction for flat metrics exactly as what scalar curvature means for them. That is, flat metrics are locally rigid for nonnegative $Q$-curvature. As a special case, we proved a local rigidity result for $Q$-curvature on torus $T^n$ ($n \geq 3$), which is  similar to the non-existence result of positive scalar curvature metrics on torus due to Schoen-Yau and Gromov-Lawson (\cite{S-Y_1, S-Y_2, G-L_1, G-L_2}).

\begin{theorem}\label{flat_local_rigidity}
For $n \geq 3$, let $(M^n,\bar{g})$ be a closed flat Riemannian manifold and $g$ be a metric on $M$ with $$Q_g \geq 0.$$ Suppose $||g - \bar{g}||_{C^2(M, \bar{g})}$ is sufficiently small, then $g$ is also flat.
\end{theorem}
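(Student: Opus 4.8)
The plan is to mirror the Fischer--Marsden proof of local rigidity for scalar curvature on flat backgrounds, exploiting the fact that a flat metric $\bar g$ is the prototypical $Q$-singular space: one has $Q_{\bar g}=0$, $Ric_{\bar g}=0$, $R_{\bar g}=0$, the Paneitz operator degenerates to $P_{\bar g}=\Delta_{\bar g}^2$, and the constant function $1$ lies in $\ker\Gamma_{\bar g}^*$. Writing $h=g-\bar g$ with $\|h\|_{C^2(M,\bar g)}$ small, I would first record the exact integral identity obtained by integrating $Q_g$ over $(M,g)$: since the divergence term $A_n\Delta_gR_g$ integrates to zero on a closed manifold,
\begin{equation}
\int_M Q_g\,dv_g=\int_M\left(B_n|Ric_g|_g^2+C_nR_g^2\right)dv_g,
\end{equation}
and the hypothesis $Q_g\ge 0$ forces the right-hand side to be nonnegative. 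Decomposing $Ric_g=E_g+\tfrac{R_g}{n}g$ into its trace-free part $E_g$ and trace and inserting the values of $B_n,C_n$, this reads $\int_M\big(B_n|E_g|^2+\tfrac{n^2-4}{8n(n-1)^2}R_g^2\big)dv_g\ge 0$, where the first coefficient is negative and the second positive for every $n\ge 3$.

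Because the quadratic form above is \emph{indefinite}, the integral identity alone cannot yield rigidity; the positive (conformal) direction must be handled separately, exactly as the Yamabe/conformal-Laplacian direction is handled in the scalar-curvature proof. I would therefore split $h$ using the flat-background orthogonal (York-type) decomposition $h=u\,\bar g+\mathcal L_X\bar g+h^{TT}$ into conformal, tangential (pure gauge), and transverse-traceless parts, noting that the linearizations $R'_{\bar g}$ and $Ric'_{\bar g}$ depend only on the conformal and $TT$ components. For the conformal direction I would invoke the conformal covariance of the Paneitz operator: normalizing $g$ within its own conformal class by a factor $u$ close to $1$, so that $g=u^{4/(n-4)}g_0$ (with the logarithmic analogue when $n=4$), the transformation law $P_{g_0}u=\tfrac{n-4}{2}Q_g\,u^{(n+4)/(n-4)}$ shows that $Q_g\ge 0$ makes $P_{g_0}u$ one-signed; since $P_{g_0}$ is a small perturbation of $P_{\bar g}=\Delta_{\bar g}^2$, integrating over the closed manifold forces, to leading order, the conformal factor to be constant and the positive conformal contribution in the identity to vanish. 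For the $TT$ direction I would use that $R'_{\bar g}(h^{TT})=0$ while $Ric'_{\bar g}(h^{TT})\ne 0$, so by the identity the $TT$ contribution to $\int_M Q_g\,dv_g$ is negative definite to leading order; combined with the global nonnegativity this forces $h^{TT}=0$ to leading order.

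Finally I would assemble the two estimates to conclude that $h$ is pure gauge to leading order, hence that $Q_g\equiv 0$ and $Ric_g\equiv 0$, and then upgrade infinitesimal flatness to $Rm_g\equiv 0$ using the smallness of $\|h\|_{C^2}$. The hard part will be precisely this passage from the \emph{linearized} (second-order) rigidity to genuine rigidity: the conformal and $TT$ analyses interact through the quadratic and higher cross terms in $Q_g$, so the clean sign decomposition that is valid for the linearization must be propagated into the nonlinear regime. I expect to control this by setting up an Ebin-type slice together with a conformal normalization and an implicit-function/iteration scheme, so that the background smallness guarantees the error terms are dominated by the definite quadratic forms above, leaving only the flat deformations.
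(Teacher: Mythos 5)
Your diagnosis of the starting identity is correct: with the $g$-volume form, $\int_M Q_g\,dv_g=\int_M\big(B_n|Ric_g|_g^2+C_nR_g^2\big)\,dv_g$ is indefinite at quadratic order (negative in the trace-free direction, positive in the scalar direction), and this remains true even after gauge-fixing $h$ to be divergence-free. But your mechanism for eliminating the positive direction has a genuine gap. The Paneitz conformal-covariance argument requires the conformal representative $g_0$ to be \emph{exactly} Ricci flat, so that $P_{g_0}=\Delta_{g_0}^2$ and a one-signed $P_{g_0}u$ makes $\Delta_{g_0}u$ sub- or super-harmonic, hence $u$ constant; this is precisely how the paper proves its global Proposition \ref{prop:conf_Ricci_flat}, which concerns metrics conformal to Ricci-flat ones. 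In your situation $g_0$ cannot be taken flat --- that $g$ lies in a flat conformal class is essentially what is to be proved --- only close to $\bar g$ (or of constant scalar curvature via Yamabe). Then $P_{g_0}-\Delta_{g_0}^2$ consists of curvature terms of size $O(h)$ acting on $u-1=O(h)$, producing errors of exactly the same quadratic order as the definite terms you want to exploit, so there is no ``leading order'' at which the constancy of $u$ follows. Consequently the proposed decoupling (first the conformal factor is constant, then negativity in the TT direction forces $h^{TT}=0$) does not close, and the concluding implicit-function/iteration scheme is asserted rather than constructed: the proposal reduces the theorem to a problem at least as hard as the original.

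The missing idea --- and the actual mechanism of the paper's proof --- is to freeze the volume form rather than decompose conformally. Since flat $\bar g$ is $Q$-singular with constant potential $f\equiv 1\in\ker\Gamma_{\bar g}^*$, the paper studies $\mathscr{F}(g)=\int_M Q_g\,dv_{\bar g}$, after using the Slice Theorem (Theorem \ref{Slice_Theorem}) to write $h=\varphi^*g-\bar g$ with $\delta_{\bar g}h=0$ and $\|h\|_{C^2}\le N\|g-\bar g\|_{C^2}$. Then $\bar g$ is a critical point of $\mathscr{F}$, and by Lemma \ref{2nd_variation_flat} its Hessian on the divergence-free slice is $D^2\mathscr{F}_{\bar g}(h,h)=-2\alpha_n\int_M|\Delta(tr\,h)|^2\,dv_{\bar g}+\frac{1}{2}B_n\int_M|\Delta\overset{\circ}{h}|^2\,dv_{\bar g}$ with $\alpha_n>0>B_n$: \emph{negative} semidefinite, and definite in $\Delta h$. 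The discrepancy with your form is exactly the volume-element cross term $\int_M Q'\,(tr\,h)\,dv_{\bar g}=-A_n\int_M|\Delta(tr\,h)|^2\,dv_{\bar g}>0$, which is present in $\frac{d^2}{dt^2}\int_M Q\,dv_{g}$ but absent from $\mathscr{F}$; removing it flips the trace coefficient from positive to negative, so the indefiniteness you fight against never arises. Since $Q_g\ge 0$ gives $\mathscr{F}(\varphi^*g)\ge 0$, Taylor expansion with cubic error $|E_3|\le C_0\int_M|h|\,|\nabla^2h|^2\,dv_{\bar g}$ yields $\mu_n\int_M|\Delta h|^2\,dv_{\bar g}\le C_0\|h\|_{C^0}\int_M|\nabla^2h|^2\,dv_{\bar g}$; as $\int_M|\Delta h|^2\,dv_{\bar g}=\int_M|\nabla^2h|^2\,dv_{\bar g}$ on a flat background, $C^0$-smallness forces $\nabla^2h=0$, then $\nabla h=0$, and a parallel $h$ makes the Christoffel symbols of $\varphi^*g$ vanish in flat coordinates, so $g$ is flat. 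The entire nonlinear content is the cubic error estimate; no conformal normalization, York splitting, or iteration is needed.
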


As for global rigidity, we have the following interesting result:

\begin{theorem}\label{thm:rigidity_tori}
Suppose $g$ is a locally conformally flat metric on $T^n$ with $$Q_g \geq 0,$$ then $g$ is flat. 

In particular, any metric $g$ on $T^4$ with nonnegative $Q$-curvature has to be flat.
\end{theorem}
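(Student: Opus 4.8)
The plan is to establish the general locally conformally flat statement first and then deduce the $T^4$ assertion from it. For the last claim I would use the Gauss--Bonnet--Chern formula (\ref{Gauss_Bonnet_Chern}): since $\chi(T^4)=0$, any metric $g$ on $T^4$ satisfies
\begin{equation*}
\int_{T^4}\left(Q_g+\tfrac14|W_g|_g^2\right)dv_g=0.
\end{equation*}
As both integrands are nonnegative under the hypothesis $Q_g\geq 0$, they must vanish identically; hence $W_g\equiv 0$, so $g$ is locally conformally flat, and moreover $Q_g\equiv 0$. This reduces the $T^4$ case to the general statement, so it suffices to treat a locally conformally flat $g$ on $T^n$ with $Q_g\geq 0$.

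The central step, which I expect to be the main obstacle, is to upgrade the \emph{local} conformal flatness of $g$ to a \emph{global} conformal relation with the flat metric $\bar g$, namely $g=u^{4/(n-4)}\bar g$ for some $u>0$ when $n\neq 4$, and $g=e^{2w}\bar g$ when $n=4$. I would obtain this by lifting to the universal cover $\mathbb{R}^n$, where the lifted metric $\tilde g$ is a complete, $\mathbb{Z}^n$-periodic conformally flat metric, and studying the conformal developing map $D\colon(\mathbb{R}^n,\tilde g)\to S^n$ together with the induced holonomy representation $\rho\colon\pi_1(T^n)\cong\mathbb{Z}^n\to\mathrm{Conf}(S^n)$. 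The difficulty is that the developing map need not be injective for a general conformally flat manifold; here one must use the sign condition on $Q_g$, in the spirit of the Schoen--Yau analysis of conformally flat manifolds with positive scalar curvature, to control the limit set and deduce injectivity of $D$, after which $\tilde g$ is realized as conformal to a flat metric on a domain $\Omega\subset S^n$. Since $\pi_1(T^n)$ is abelian, the corresponding Kleinian group is elementary, and the topology of $T^n$ excludes the similarity (Hopf) structures via Fried's theorem and the Bieberbach theorems; the remaining possibility is the flat conformal structure, in which $\tilde g$ is globally conformal to the Euclidean metric by an automorphic factor that descends to the desired global conformal factor on $T^n$.

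Granting this reduction, the conclusion follows from the conformal transformation law for $Q$-curvature and a maximum principle. Because $\bar g$ is flat, one has $\mathrm{Ric}_{\bar g}=0$, $R_{\bar g}=0$ and $Q_{\bar g}=0$, so the Paneitz operator collapses to the bi-Laplacian $P_{\bar g}=\Delta_{\bar g}^2$, and the governing equation reads
\begin{equation*}
\Delta_{\bar g}^2 u=\tfrac{n-4}{2}\,Q_g\,u^{\frac{n+4}{n-4}}\quad(n\neq 4),
\qquad
\Delta_{\bar g}^2 w=Q_g\,e^{4w}\quad(n=4).
\end{equation*}
Integrating over the closed manifold $T^n$ annihilates the left-hand side, while the right-hand side has a constant sign determined by that of $\tfrac{n-4}{2}$ together with $Q_g\geq 0$ and $u>0$ (respectively $e^{4w}>0$); therefore $Q_g\equiv 0$ in either case. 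The residual equation $\Delta_{\bar g}^2 u=0$ on the closed torus, paired with $u$ and integrated by parts, gives $\int_{T^n}|\Delta_{\bar g}u|^2\,dv_{\bar g}=0$, whence $\Delta_{\bar g}u\equiv 0$ and $u$ (respectively $w$) is constant. Thus $g$ is a constant multiple of the flat metric $\bar g$, and in particular flat, completing the argument.
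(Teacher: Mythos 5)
Your opening reduction of the $T^4$ case via the Gauss--Bonnet--Chern formula (\ref{Gauss_Bonnet_Chern}) and your closing step (the conformal transformation law for $Q$-curvature, integration over the closed torus to force $Q_g\equiv 0$, then $\int_{T^n}|\Delta_{\bar g}u|^2\,dv_{\bar g}=0$ so the conformal factor is constant) are both correct and match the paper, which carries out the same endgame in Proposition \ref{prop:conf_Ricci_flat} using subharmonicity instead of integration by parts. The problem is your middle step, which you correctly identify as the main obstacle but do not actually prove. You propose to show that the locally conformally flat metric is \emph{globally} conformal to a flat metric by analyzing the developing map $D\colon\widetilde{T^n}\to S^n$ and deducing its injectivity ``using the sign condition on $Q_g$, in the spirit of the Schoen--Yau analysis.'' This invocation has no content as it stands: the Schoen--Yau injectivity and limit-set machinery in \cite{S-Y_3} is built on the conformal Laplacian, positive Green's functions, and the sign of the \emph{scalar} curvature, and the hypothesis $Q_g\geq 0$ gives no pointwise control on $R_g$ whatsoever. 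There is no known analogue of that theory for the Paneitz operator under $Q_g\geq 0$ alone (the existing fourth-order results all require positive Yamabe-type assumptions), so the key analytic input your argument rests on is neither a citable theorem nor something you supply a proof of. As written, this is a genuine gap.

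The gap is avoidable, and the paper's route shows how: the global statement you are struggling for requires no curvature hypothesis on $g$ at all. In Lemma \ref{lem:conf_structure_tori} the paper argues as follows: by the solution of the Yamabe problem, $g$ is conformal to a metric $\bar g$ of constant scalar curvature; if $R_{\bar g}<0$, then Proposition 1.2 of \cite{S-Y_3} (which applies to closed locally conformally flat manifolds of negative constant scalar curvature) would force $\pi_1(T^n)$ to be non-amenable, contradicting that $\mathbb{Z}^n$ is abelian, hence amenable; therefore $R_{\bar g}\geq 0$, and the torus rigidity theorem of Schoen--Yau and Gromov--Lawson \cite{S-Y_1, S-Y_2, G-L_1, G-L_2} forces $\bar g$ to be flat. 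This soft argument completely decouples the classification of the conformal structure from the $Q$-curvature hypothesis, which is then used only in the final step you already have. If you replace your developing-map paragraph by this lemma (or by an actual classification of conformally flat structures with abelian holonomy), your proof closes up and coincides in substance with the paper's.
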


It would be interesting to compare notions of $Q$-singular and vacuum static spaces. In fact, we have the following observation:

\begin{theorem}\label{Q-static_R-static}
Let $\mathcal{M}_R$ be the space of all closed vacuum static spaces and $\mathcal{M}_Q$ be the space of all closed $Q$-singular spaces. 
Suppose $(M, g, f) \in \mathcal{M}_R \cap \mathcal{M}_Q$, then $M$ is Einstein necessarily. In particular, it has to be either Ricci flat or isometric to a round sphere.
\end{theorem}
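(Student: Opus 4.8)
The starting point is to combine both hypotheses, which share the common nonzero potential $f$. The vacuum static condition is
\[
\gamma_g^*(f) = \mathrm{Hess}_g f - (\Delta_g f)\,g - f\,Ric_g = 0,
\]
and the $Q$-singular condition is $\Gamma_g^*(f) = 0$. First I would record the standard consequences of the static equation. Its trace gives $\Delta_g f = -\tfrac{R_g}{n-1}f$, while taking a divergence and using the contracted second Bianchi identity yields $f\,\nabla R_g = 0$ and hence that $R_g$ is constant; subtracting the trace then gives $\mathrm{Hess}_g f = f\,Ric_g - \tfrac{R_g}{n-1}f\,g$, whose trace-free part reads $\mathrm{Hess}_g f - \tfrac{\Delta_g f}{n}\,g = f\,\mathring{Ric}_g$. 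The decisive feature of these identities is that they allow every covariant derivative of $f$ occurring in $\Gamma_g^*(f)$ to be traded for curvature terms multiplied by $f$.

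The key simplification comes from grouping $\Gamma_g^*$ according to the three terms of $Q_g = A_n\Delta_g R_g + B_n|Ric_g|^2 + C_n R_g^2$ (Proposition \ref{prop:Gamma^*}) and observing that, on a static background, the first and third groups drop out. Indeed, the adjoint of the linearization of $C_n R_g^2$ applied to $f$ equals $2C_n\,\gamma_g^*(R_g f)$, which vanishes since $R_g$ is constant and $\gamma_g^*(f) = 0$. For $A_n\Delta_g R_g$, constancy of $R_g$ annihilates the variation of the Laplacian acting on $R_g$, so the linearization reduces to $A_n\Delta_g(\gamma_g(h))$, and its adjoint applied to $f$ is $A_n\gamma_g^*(\Delta_g f) = -\tfrac{A_n R_g}{n-1}\gamma_g^*(f) = 0$. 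Hence on $\mathcal{M}_R\cap\mathcal{M}_Q$ only the $B_n|Ric_g|^2$ term survives, and computing its linearization and adjoint yields the single tensor equation
\[
0 = \Gamma_g^*(f) = 2B_n\bigl[\rho_g^*(f\,Ric_g) - f\,(Ric_g)^2\bigr],
\]
where $\rho_g$ is the linearization of the Ricci tensor, $\rho_g^*$ its $L^2$-formal adjoint, and $(Ric_g)^2$ the tensor with components $(Ric_g)_{ik}(Ric_g)^k{}_j$. Since $B_n\neq 0$ this is precisely $\rho_g^*(f\,Ric_g) = f\,(Ric_g)^2$.

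It remains to extract $\mathring{Ric}_g\equiv 0$ from this equation. I would expand $\rho_g^*(f\,Ric_g)$ by the Leibniz rule: the derivatives landing on $f$ are converted, via the static identities above and the constancy of $R_g$, into curvature terms times $f$, so the equation becomes a relation involving only $f$, $Ric_g$, $\nabla Ric_g$ and $\nabla^2 Ric_g$. I would then take its trace-free part, contract with $\mathring{Ric}_g$, and integrate over $M$. The goal of the integration by parts, using the second Bianchi identity and the static relations once more, is to reach a sign-definite identity of the shape
\[
\int_M\Bigl(c\,f^2\,|\mathring{Ric}_g|^2 + \bigl|\nabla(\,\cdot\,)\bigr|^2\Bigr)\,dv_g = 0,
\]
which forces $f\,\mathring{Ric}_g\equiv 0$. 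Since $f$ solves the elliptic equation $\Delta_g f = -\tfrac{R_g}{n-1}f$, its zero set has empty interior, so $\mathring{Ric}_g\equiv 0$ and $(M,g)$ is Einstein.

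Finally, with $(M,g)$ Einstein the static equation collapses to the Obata-type identity $\mathrm{Hess}_g f = -\tfrac{R_g}{n(n-1)}f\,g$, and the computation $\int_M|\nabla f|^2\,dv_g = -\int_M f\,\Delta_g f\,dv_g = \tfrac{R_g}{n-1}\int_M f^2\,dv_g$ shows that $R_g\geq 0$ (a negative value would force $f\equiv 0$). Thus $(M,g,f)$ is a closed $Q$-singular Einstein manifold with $R_g\geq 0$, and Theorem \ref{Classificaition_Q_singular_Einstein} applies to conclude that $(M,g)$ is Ricci flat when $f$ is constant and isometric to a round sphere when $f$ is non-constant. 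I expect the main obstacle to lie in the third step: the explicit expansion of $\rho_g^*(f\,Ric_g)$ and, above all, the choice of contraction and integrations by parts needed so that the resulting integral is genuinely sign-definite, with no indefinite curvature remainder surviving. By contrast, the reduction in the second step to the single $B_n|Ric_g|^2$ term is clean and is what makes the whole argument tractable.
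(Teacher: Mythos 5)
Your reduction to the $B_n$-term is correct and is in fact the same simplification the paper performs: on a vacuum static background $R_g$ is constant, the $A_n$-group collapses to $A_n\gamma_g^*(\Delta_g f)=-\tfrac{A_nR_g}{n-1}\gamma_g^*f=0$, the $C_n$-group to $2C_nR_g\gamma_g^*f=0$, and one is left with $\rho_g^*(f\,Ric_g)=f\,(Ric_g\times Ric_g)$. Your closing steps (nonnegativity of $R_g$, density of $\{f\neq 0\}$, appeal to Theorem \ref{Classificaition_Q_singular_Einstein}) are also sound. The genuine gap is the step you yourself flag as the main obstacle: extracting $Ric_g=\tfrac{R_g}{n}g$ from the surviving tensor equation. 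You leave it unexecuted, and the route you propose --- take the trace-free part, contract with the trace-free Ricci, integrate by parts, and hope for a sign-definite identity --- is unlikely to close. Expanding $\rho_g^*(f\,Ric_g)$ produces the Lichnerowicz term $f\,\Delta_L Ric_g$, whose curvature part contracted against the trace-free Ricci yields terms of the form $f\,R_{ikjl}\,T^{ij}T^{kl}$ (with $T$ the trace-free Ricci), governed by the Weyl tensor and of no definite sign, as well as Cotton-type terms $C_{ijk}\nabla^i f$ coming from a single derivative landing on $f$; the static identities convert only the \emph{Hessian} of $f$ into $f\times$curvature, not $\nabla f$, so none of these indefinite remainders can be absorbed. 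This is precisely the ``indefinite curvature remainder'' you hoped would not survive.

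The missing idea is much simpler, and it is what the paper does: take the \emph{trace} of the surviving equation rather than its trace-free part. In the trace every problematic term dies pointwise: $tr(f\Delta_L Ric_g)=f\Delta_g R_g=0$ because $R_g$ is constant (the curvature terms inside $\Delta_L$ cancel upon tracing); the Cotton-type terms vanish because the Cotton tensor is trace-free; and the divergence terms reduce, via the contracted Bianchi identity $\delta_g Ric_g=-\tfrac12 dR_g=0$, to $Ric_g\cdot\nabla^2 f$, which the static identity $\nabla^2 f=f\left(Ric_g-\tfrac{R_g}{n-1}g\right)$ turns into $\left(|Ric_g|^2-\tfrac{R_g^2}{n-1}\right)f$. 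Assembling these, the trace of your equation $\rho_g^*(f\,Ric_g)-f(Ric_g\times Ric_g)=0$ becomes a nonzero multiple of the pointwise identity
\begin{align*}
\left(|Ric_g|^2-\frac{R_g^2}{n}\right)f \;=\; \left|Ric_g-\frac{R_g}{n}\,g\right|^2 f \;=\; 0,
\end{align*}
with no integration by parts at all. Since $f\neq 0$ on a dense subset (your unique-continuation argument, or the paper's observation that $f^{-1}(0)$ is a regular hypersurface because $df\neq 0$ there), $g$ is Einstein, and your final paragraph then finishes the proof exactly as in the paper.
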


This article is organized as follows: We explain some general notations and conventions in Section 2. We then give a characterization of $Q$-singular spaces and proofs of Theorem \ref{thm:Q_const}, \ref{Classificaition_Q_singular_Einstein} in Section 3. In Section 4, we investigate several stability results, including Theorem \ref{Q_stability}, Corollary \ref{cor:stab_pos_Einstein}, \ref{cor:prescribing_zero_Q}, \ref{cor:pos_Y_negative_Q} and Theorem \ref{Ricci_flat_stability}. In Section 5, local rigidity of flat metrics, say Theorem \ref{flat_local_rigidity} and \ref{thm:rigidity_tori} will be shown. Some relevant results will also be discussed there. In Section 6, we discuss the relation between $Q$-singular and vacuum static spaces and prove Theorem \ref{Q-static_R-static} in the end of Section 6. In the end of this article, we provide a brief discussion about conformal properties of $Q$-curvature and Paneitz operator from the viewpoint of conformal geometry.\\

\paragraph{\textbf{Acknowledgement}}
The authors would like to express their appreciations to professor Sun-Yung Alice Chang, Professor Justin Corvino,  Professor Matthew Gursky, Professor Fengbo Hang, Professor Jie Qing and Professor Paul Yang, and Dr.Yi Fang for their interests in this work and inspiring discussions. Especially, we would like to thank Professor Matthew Gursky for pointing out the work in \cite{C-G-Y}. The authors are also grateful for MSRI/IAS/PCMI Geometric Analysis Summer School 2013, which created the opportunity for initiating this work. Part of the work was done when the second author visited \emph{Institut Henri Poincar\'e} and we would like to give our appreciations to the hospitality of IHP. \\



\section{Notations and conventions}

Throughout this article, we will always assume $(M^n, g)$ to be an $n$-dimensional closed Riemannian manifold ($n \geq 3$) unless otherwise stated. Here by \emph{closed}, we mean compact without boundary. We also take $\{\partial_i\}_{i=1}^n$ to be a local coordinates around certain point. Hence we also use its components to stand for a tensor or vector in the article.\\

For convenience, we use following notions:

$\mathcal{M}$ - the set of all smooth metric on $M$;

$\mathscr{D}(M)$ - the set of all smooth diffeomorphisms $ \varphi : M \rightarrow M$;

$\mathscr{X}(M)$ - the set of all smooth vector fields on $M$;

$S_2(M)$ - the set of all smooth symmetric 2-tensors on $M$.\\

We adopt the following convention for Ricci curvature tensor,
\begin{align*}
R_{jk} = R^i_{ijk} = g^{il} R_{ijkl}.
\end{align*}

As for Laplacian operator, we simply use the common convention as follow,
\begin{align*}
\Delta_g := g^{ij} \nabla_i \nabla_j.
\end{align*}

Let $h, k \in S_2(M)$, for convenience, we define following operations:
\begin{align*}
(h \times k )_{ij} := g^{kl}h_{ik}k_{jl} = h_i^lk_{lj}
\end{align*}
and
\begin{align*}
h \cdot k  := tr (h \times k) = g^{ij}g^{kl}h_{ik}k_{jl} = h^{jk}k_{jk}.
\end{align*}

Let $X \in \mathscr{X}(M)$ and $h \in S_2(M)$, we use the following notations for operators
\begin{align*}
(\overset{\circ}{Rm} \cdot h )_{jk}:= R_{ijkl} h^{il},
\end{align*}
and
\begin{align*}
(\delta_g h)_i := - (div_g h)_i = -\nabla^j h_{ij},
\end{align*}
which is the $L^2$-formal adjoint of Lie derivative (up to scalar multiple) $$\frac{1}{2}(L_g X)_{ij} = \frac{1}{2} ( \nabla_i X_j + \nabla_j X_i).$$\\


\section{Characterizations of Q-singular spaces}

Let $(M,g)$ be a Riemannian manifold and $\{g(t)\}_{t \in (-\varepsilon, \varepsilon)}$ be a 1-parameter family of metrics on $M$ with $g(0) =g$ and $g'(0) = h$. Easy to see, ${g'}^{ij} = - h^{ij}$.\\

We have the following well-known formulae for linearizations of geometric quantities. (c.f. \cite{C-L-N, F-M, Yuan})
\begin{proposition}
The linearization of Christoffel symbol is
\begin{align}
{\Gamma'}_{ij}^k = \frac{1}{2} g^{kl} \left( \nabla_i h_{jl} +\nabla_j h_{il} - \nabla_l h_{ij} \right).
\end{align}
\label{1st_variation_Ricci_scalar}
The linearization of Ricci tensor is
\begin{align}
Ric' = \left.\frac{d}{dt}\right|_{t=0}  Ric(g(t))_{jk} = - \frac{1}{2}\left( \Delta_L h_{jk} +
\nabla_j \nabla_k (tr h) + \nabla_j (\delta h)_k + \nabla_k (\delta
h)_j  \right),
\end{align}
where the Lichnerowicz Laplacian acting on $h$ is defined to be
\begin{align*}
\Delta_L h_{jk} = \Delta h_{jk} + 2 (\overset{\circ}{Rm}\cdot
h)_{jk} - R_{ji} h^i_k - R_{ki}h^i_j,
\end{align*}
and the linearization of scalar curvature is
\begin{align}\label{scalar_1st_variation}
R' = \left.\frac{d}{dt}\right|_{t=0}  R(g(t)) = - \Delta (tr h) +  \delta^2 h - Ric \cdot h.
\end{align}
\end{proposition}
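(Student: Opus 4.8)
These three formulae are classical and the citations (\cite{C-L-N, F-M, Yuan}) treat them as standard, so the plan is to establish them by direct computation in the order stated, since each feeds into the next. First I would compute the variation of the Christoffel symbols. The key point is that the difference of two linear connections is a tensor, so $\Gamma'$ is a genuine $(1,2)$-tensor and may be evaluated pointwise in geodesic normal coordinates centered at a chosen point $p$. There $\Gamma^k_{ij}(p)=0$ and the first partial derivatives of $g$ vanish at $p$, so differentiating $\Gamma^k_{ij}=\frac{1}{2}g^{kl}(\partial_i g_{jl}+\partial_j g_{il}-\partial_l g_{ij})$ in $t$ annihilates the term in which the $g^{kl}$ factor is differentiated and promotes each $\partial$ to a $\nabla$. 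This yields ${\Gamma'}^k_{ij}=\frac{1}{2}g^{kl}(\nabla_i h_{jl}+\nabla_j h_{il}-\nabla_l h_{ij})$ at $p$, and since both sides are tensors the identity holds everywhere.

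Next, for the Ricci tensor I would invoke the Palatini-type identity $Ric'_{jk}=\nabla_i{\Gamma'}^i_{jk}-\nabla_j{\Gamma'}^i_{ik}$, which comes from differentiating the expression of $R_{jk}$ in terms of $\Gamma$ and noting that the quadratic-in-$\Gamma$ terms combine with the differentiated partials into covariant derivatives. A short contraction using the symmetry of $h$ shows ${\Gamma'}^i_{ik}=\frac{1}{2}\nabla_k(tr\,h)$, so substituting the formula for $\Gamma'$ writes $Ric'_{jk}$ as a sum of second covariant derivatives of $h$: a rough Laplacian $-\frac{1}{2}\Delta h_{jk}$, the Hessian $-\frac{1}{2}\nabla_j\nabla_k(tr\,h)$, and two terms of the form $\nabla^l\nabla_j h_{kl}$ and $\nabla^l\nabla_k h_{jl}$.

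The main obstacle is the final reorganization into Lichnerowicz form. To convert $\nabla^l\nabla_j h_{kl}$ into $-\nabla_j(\delta h)_k$ I must commute the two covariant derivatives, and the resulting commutator is exactly the Ricci identity applied to the $2$-tensor $h$; this is where the curvature terms $2(\overset{\circ}{Rm}\cdot h)_{jk}-R_{ji}h^i_k-R_{ki}h^i_j$ assembling into $\Delta_L h_{jk}$ are generated. Careful bookkeeping of these commutators, tracking which index is contracted against the curvature, is the only delicate step and the one most prone to sign errors.

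Finally, the scalar curvature variation follows by tracing, and in fact needs only the Christoffel formula. Since $R=g^{jk}R_{jk}$ and ${g'}^{jk}=-h^{jk}$, differentiating gives $R'=-h^{jk}R_{jk}+g^{jk}Ric'_{jk}=-Ric\cdot h+g^{jk}Ric'_{jk}$. Tracing ${\Gamma'}^k_{ij}$ directly yields $g^{jk}{\Gamma'}^i_{jk}=-(\delta h)^i-\frac{1}{2}\nabla^i(tr\,h)$, so that $g^{jk}Ric'_{jk}=\delta^2 h-\Delta(tr\,h)$, and the stated identity $R'=-\Delta(tr\,h)+\delta^2 h-Ric\cdot h$ drops out without recourse to the Lichnerowicz rearrangement.
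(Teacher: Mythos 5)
Your proposal is correct and complete in outline: the normal-coordinates argument for $\Gamma'$, the Palatini identity $Ric'_{jk}=\nabla_i{\Gamma'}^i_{jk}-\nabla_j{\Gamma'}^i_{ik}$ with the commutation of derivatives producing the Lichnerowicz curvature terms, and the trace argument for $R'$ (which indeed avoids any commutation) are all consistent with the paper's conventions $R_{jk}=R^i_{ijk}$, $(\delta h)_i=-\nabla^j h_{ij}$, $\delta^2 h=\nabla^i\nabla^j h_{ij}$. The paper itself gives no proof of this proposition — it states the formulae as well known, citing \cite{C-L-N, F-M, Yuan} — and your derivation is precisely the standard one carried out in those references, so there is nothing to contrast.
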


For simplicity, we use $'$ to denote the differentiation with
respect to the parameter $t$ and evaluated at $t=0$.\\

\begin{lemma}
The linearization of the Laplacian acting on the scalar curvature is
\begin{align}\label{laplacian_scalar_variation}
(\Delta R)' = \left.\frac{d}{dt}\right|_{t=0}  \Delta_{g(t)} R(g(t)) = - \nabla^2 R\cdot h + \Delta
R' + \frac{1}{2} dR \cdot (d( tr h ) + 2\delta h).
\end{align}
\end{lemma}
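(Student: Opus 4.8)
The plan is to differentiate $F(t) := \Delta_{g(t)} R(g(t))$ at $t = 0$ by isolating the two sources of $t$-dependence. By the Leibniz rule, the derivative splits as
\begin{align*}
(\Delta R)' = \left.\frac{d}{dt}\right|_{t=0}\left(\Delta_{g(t)}\right)[R_g] + \Delta_g R',
\end{align*}
where the first summand is the variation of the Laplace operator applied to the \emph{fixed} background function $R_g$, and the second is the background Laplacian applied to the already-known variation $R'$ from (\ref{scalar_1st_variation}). Thus the entire content of the lemma reduces to computing the operator variation $(\Delta)'[u]$ acting on a fixed smooth function $u$, which I would then specialize to $u = R_g$.

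To compute $(\Delta)'[u]$, I would write the Laplacian in coordinates through the Christoffel symbols,
\begin{align*}
\Delta_g u = g^{ij}\nabla_i\nabla_j u = g^{ij}\left(\partial_i\partial_j u - \Gamma_{ij}^k \,\partial_k u\right),
\end{align*}
and differentiate in $t$. Using $(g^{ij})' = -h^{ij}$, the terms that do not involve $(\Gamma_{ij}^k)'$ recombine into $-h^{ij}\left(\partial_i\partial_j u - \Gamma_{ij}^k\partial_k u\right) = -h^{ij}\nabla_i\nabla_j u = -\nabla^2 u \cdot h$. The key bookkeeping point here is that one must keep the $\partial_i\partial_j u$ and the $\Gamma_{ij}^k\partial_k u$ pieces together, so that a genuine covariant Hessian reappears rather than bare coordinate derivatives.

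For the remaining term, $-g^{ij}(\Gamma_{ij}^k)'\,\partial_k u$, I would substitute the linearization of the Christoffel symbols from Proposition \ref{1st_variation_Ricci_scalar}. Tracing $\frac{1}{2}g^{ij}g^{kl}\left(\nabla_i h_{jl} + \nabla_j h_{il} - \nabla_l h_{ij}\right)$ against the metric produces $-(\delta h)$ from each of the first two summands (since $\nabla^j h_{jl} = -(\delta h)_l$) and $d(\mathrm{tr}\, h)$ from the last (since $g^{ij}\nabla_l h_{ij} = \nabla_l \mathrm{tr}\, h$), giving $g^{ij}(\Gamma_{ij}^k)' = -\frac{1}{2}g^{kl}\left(2(\delta h)_l + \nabla_l \mathrm{tr}\, h\right)$. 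Contracting with $\partial_k u = (du)_k$ then yields $\frac{1}{2}\, du\cdot\left(d(\mathrm{tr}\, h) + 2\delta h\right)$. Setting $u = R_g$ and adding the three contributions gives the stated identity. The only genuine obstacle is the sign and trace bookkeeping in this last step; everything else is a direct application of the known first variations.
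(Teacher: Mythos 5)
Your proposal is correct and takes essentially the same route as the paper: both differentiate the coordinate expression $g^{ij}\left(\partial_i\partial_j R - \Gamma^k_{ij}\partial_k R\right)$, use $(g^{ij})' = -h^{ij}$ together with the linearization of the Christoffel symbols, and trace to produce $\frac{1}{2}\, dR\cdot\left(d(tr\, h) + 2\delta h\right)$. The only cosmetic difference is that the paper evaluates in normal coordinates at a point (so the bare $\partial_i\partial_j$ terms are automatically covariant there), whereas you keep general coordinates and group the $\partial_i\partial_j u$ and $\Gamma^k_{ij}\partial_k u$ pieces so the covariant Hessian reappears invariantly.
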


\begin{proof}
Calculating in normal coordinates at an arbitrary point,
\begin{align*}
&\left.\frac{d}{dt}\right|_{t=0}  \Delta_{g(t)} R(g(t)) \\
=& \left.\frac{d}{dt} \right|_{t=0}\left({g^{ij}(\partial_i\partial_j R-\Gamma^k_{ij}\partial_kR)}\right)\\
=& -h^{ij}\partial_i\partial_jR+g^{ij}(\partial_i\partial_j R'-(\Gamma^k_{ij})'\partial_k R-\Gamma^k_{ij}\partial_k R')\\
=& -h^{ij}\partial_i\partial_j R - g^{ij}(\Gamma^k_{ij})'\partial_k R+\Delta R'\\
=& -h^{ij}\partial_i\partial_j R - \frac{1}{2}g^{ij}g^{kl}(\nabla_ih_{jl}+\nabla_jh_{il}-\nabla_lh_{ij})\partial_kR+\Delta R'\\
=& -h^{ij}\nabla_i\nabla_jR-\nabla^ih^k_i\nabla_kR+\frac{1}{2}\nabla^k(trh)\nabla_kR+\Delta R'\\
=& - \nabla^2 R\cdot h + \Delta R' + \frac{1}{2} dR \cdot (d( tr h ) + 2\delta h).
\end{align*}

\end{proof}

Now we can calculate the linearization of $Q$-curvature (1st variation).\\

\begin{proposition}\label{Q_1st_variation}
The linearization of $Q$-curvature is
\begin{align}\label{Q_linearization}
\Gamma_g h :=& DQ_g \cdot h 
= A_n \left( - \Delta^2 (tr h) +  \Delta \delta^2 h  -
\Delta ( Ric \cdot h ) + \frac{1}{2} dR \cdot (d( tr h ) + 2\delta h) - \nabla^2 R\cdot h\right)
\\ \notag & - B_n \left(  Ric \cdot \Delta_L h + Ric \cdot
\nabla^2(tr h) + 2 Ric \cdot\nabla (\delta h)   +2 (Ric\times Ric) \cdot h
\right) \\
\notag &+ 2 C_n R \left( - \Delta (tr h) +  \delta^2 h - Ric \cdot h
\right).
\end{align}
\end{proposition}

\begin{proof}
Note that
\begin{align*}
\left.\frac{d}{dt}\right|_{t=0} |Ric(g(t))|_{g(t)}^2
=&\left.\frac{d}{dt}\right|_{t=0} \left(g^{ik}g^{jl}R_{ij}R_{kl}\right)
=2g^{ik}g^{jl}R'_{ij}R_{kl}+2g'^{ik}g^{jl}R_{ij}R_{kl}\\
=&2Ric' \cdot Ric - 2(Ric\times Ric) \cdot h,
\end{align*}
then we finish the proof by combining it with (\ref{1st_variation_Ricci_scalar}),  (\ref{scalar_1st_variation}) and (\ref{laplacian_scalar_variation}).
\end{proof}

Now we can derive the expression of $\Gamma_g^*$ and hence the $Q$-singular equation $$\Gamma_g^* f = 0.$$
\begin{proposition}\label{prop:Gamma^*}
The $L^2$-formal adjoint of $\Gamma_g$ is
\begin{align*}
\Gamma_g^* f :=& A_n \left(  - g \Delta^2 f + \nabla^2
\Delta f - Ric \Delta f + \frac{1}{2} g \delta (f dR) + \nabla ( f
dR) - f \nabla^2 R  \right)\\ \notag
& - B_n \left( \Delta (f Ric) + 2 f
\overset{\circ}{Rm}\cdot Ric + g \delta^2 (f Ric) + 2 \nabla \delta (f
Ric) \right)\\ \notag
&- 2 C_n \left( g\Delta (f R) - \nabla^2 (f R) + f R
Ric \right).
\end{align*}
\end{proposition}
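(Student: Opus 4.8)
The plan is to obtain $\Gamma_g^*$ directly from its defining property: since $M$ is closed, $\Gamma_g^* f$ is the unique symmetric $2$-tensor satisfying
\[
\int_M (\Gamma_g h)\, f\, dv_g = \int_M h \cdot (\Gamma_g^* f)\, dv_g
\]
for all $h \in S_2(M)$ and all $f \in C^\infty(M)$. Starting from the explicit formula (\ref{Q_linearization}) for $\Gamma_g h$, I would pair each of its terms against $f$ and integrate by parts until every derivative acting on $h$ has been transferred onto $f$ and the background tensors $Ric$, $R$; the coefficient of $h$ in the resulting integrand is then precisely $\Gamma_g^* f$. Because $\partial M = \emptyset$, each integration by parts is exact and produces no boundary contribution, so the identification is unambiguous.

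The only tools needed are standard integral identities on a closed manifold, all consequences of the divergence theorem $\int_M \nabla_i X^i\, dv_g = 0$ together with the Leibniz rule. Concretely, I would use the elementary adjoint pairings
\[
\int_M (tr\, h)\,\phi\, dv_g = \int_M h \cdot (\phi\, g)\, dv_g, \qquad \int_M (\delta^2 h)\,\phi\, dv_g = \int_M h \cdot \nabla^2 \phi\, dv_g,
\]
the self-adjointness of $\Delta$ on both functions and symmetric $2$-tensors (hence of $\Delta^2$ and of the Lichnerowicz Laplacian $\Delta_L$), and the fact that $\delta$ is the $L^2$-adjoint of the symmetrized covariant derivative. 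Applied termwise, these reductions are routine on two of the three groups in (\ref{Q_linearization}): after one or two integrations by parts each, the $A_n$-group collapses (up to the prefactor $A_n$) to the bracket $-g \Delta^2 f + \nabla^2 \Delta f - Ric\, \Delta f + \frac{1}{2} g\, \delta(f\, dR) + \nabla(f\, dR) - f \nabla^2 R$, and the $C_n$-group (up to the prefactor $-2C_n$) to $g \Delta(fR) - \nabla^2(fR) + fR\, Ric$.

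The main obstacle, and essentially the only subtle point, is the bookkeeping inside the $B_n$-group, in particular the term $Ric \cdot \Delta_L h$. Transferring it onto $f$ by self-adjointness of $\Delta_L$ produces $\Delta_L(f\, Ric) \cdot h$, and expanding the definition of $\Delta_L$ generates, besides the wanted pieces $\Delta(f\, Ric) + 2 f\, \overset{\circ}{Rm} \cdot Ric$, two Ricci-contraction terms amounting to $-2 f\, (Ric \times Ric) \cdot h$. These must cancel exactly against the undifferentiated term $2 (Ric \times Ric) \cdot h$ already present in (\ref{Q_linearization}); confirming that the signs and numerical factors match is where care is required. The two remaining $B_n$ terms, $Ric \cdot \nabla^2 (tr\, h)$ and $2\, Ric \cdot \nabla(\delta h)$, reduce to $g\, \delta^2(f\, Ric)$ and $2\, \nabla \delta(f\, Ric)$ after two and one integrations by parts respectively. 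Collecting the three groups with their prefactors $A_n$, $-B_n$ and $-2C_n$ then yields the stated expression for $\Gamma_g^* f$.
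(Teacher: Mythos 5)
Your proposal is correct and follows essentially the same route as the paper: both compute $\Gamma_g^* f$ from the defining $L^2$ pairing by integrating by parts term by term in the three groups of (\ref{Q_linearization}), transferring $\Delta_L$ onto $f\,Ric$ and observing that the resulting $-2f(Ric\times Ric)$ terms cancel the undifferentiated $2(Ric\times Ric)\cdot h$ term, exactly as in the paper's treatment of the $|Ric|^2$ variation. The adjoint identities you list (trace versus $\phi\,g$, $\delta^2$ versus $\nabla^2$, $\delta$ versus the symmetrized derivative, self-adjointness of $\Delta$ and $\Delta_L$) are precisely the ones the paper uses implicitly.
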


\begin{proof}
For any compactly supported symmetric 2-tensor $h \in S_2(M)$, we have
\begin{align*}
&\int_M f \left.\frac{d}{dt}\right|_{t=0}  {\Delta_{g(t)} R(g(t))}\ dv_g\\
=&\int_M f\left(- \nabla^2 R\cdot h + \frac{1}{2} dR \cdot (d( tr h ) + 2\delta h)+\Delta R'\right)dv_g\\
=&\int_M \left(-h \cdot f\nabla^2R+h \cdot \nabla(fdR) + \frac{1}{2}h \cdot \delta(fdR)g + \Delta f(-\Delta trh + \delta^2h-Ric\cdot h)\right)dv_g\\
=&\int_M  \left\langle-f\nabla^2R+\nabla(fdR) + \frac{1}{2}g\delta(fdR) -g\Delta^2f+\nabla^2\Delta f-Ric\Delta f,\ h\right\rangle dv_g.
\end{align*}\\

Similarly,
\begin{align*}
&\int_M f \left.\frac{d}{dt}\right|_{t=0} |Ric(g(t))|^2_{g(t)}\ dv_g\\
=&\int_M f\left(-2h^{il}g^{jk}R_{ij}R_{kl}+2R^{ij}{R'}_{ij}\right)dv_g\\
=&\int_M -2f(Ric\times Ric)\cdot h-fR^{ij}\left(\Delta_Lh_{ij} +\nabla_i\nabla_j(tr h)
+\nabla_i(\delta h)_j + \nabla_j(\delta h)_i\right) dv_g\\
=&\int_M \left\langle -\Delta_L (fRic)-2f(Ric\times Ric) - 2\nabla\delta(fRic) - g\delta^2(fRic),\ h \right\rangle dv_g\\
=&\int_M \left\langle-\Delta (fRic)-2f\overset\circ{Rm}\cdot Ric - 2\nabla\delta(fRic)-g\delta^2(fRic),\ h \right\rangle dv_g.
\end{align*}\\

And also,
\begin{align*}
&\int_M f \left.\frac{d}{dt}\right|_{t=0} (R(g(t)))^2 \ dv_g\\
=&\int_M 2 f R \left(-\Delta trh+\delta^2h-Ric\cdot h \right)dv_g\\
=&\int_M 2\left(-\Delta (fR)trh+\nabla^2(fR) \cdot h-(fR)Ric\cdot h\right)dv_g\\
=&\int_M 2 \left\langle -g\Delta (fR)+\nabla^2(fR)-(fR)Ric,\ h \right\rangle dv_g
\end{align*}

Combining all the equalities above, we have
\begin{align*}
\int_M f DQ_g \cdot hdv_g=\int_M \langle f,\Gamma_g h\rangle dv_g=\int_M \langle\Gamma_g^*f,h\rangle dv_g.
\end{align*}

\end{proof}

\begin{corollary}
\begin{align*}
\mathscr{L}_g f := tr_g \Gamma_g^* f = \frac{1}{2} \left( P_g - \frac{n + 4}{2} Q_g \right) f. 
\end{align*}
\end{corollary}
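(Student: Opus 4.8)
The plan is to prove the identity by directly computing the metric trace of the explicit expression for $\Gamma_g^* f$ obtained in Proposition \ref{prop:Gamma^*}, and then reorganizing the result into the divergence form that defines $P_g$. Writing $tr_g(\cdot)=g^{jk}(\cdot)_{jk}$, I would first record the elementary trace rules needed: $tr_g\, g=n$, $tr_g(\nabla^2\phi)=\Delta\phi$ and $tr_g(\phi\, Ric)=\phi R$ for a scalar $\phi$; that $tr_g$ commutes with $\Delta$ because $\nabla g=0$; that $tr_g(\nabla\omega)=\nabla^k\omega_k=-\delta\omega$ for a $1$-form $\omega$; and the one genuinely geometric contraction $tr_g(\overset{\circ}{Rm}\cdot Ric)=g^{jk}R_{ijkl}R^{il}=|Ric|_g^2$, which follows from the pair symmetry $R_{ijkl}=R_{klij}$ together with the paper's Ricci convention $R_{jk}=g^{il}R_{ijkl}$.

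Applying these rules line by line to the three groups of $\Gamma_g^* f$ collapses the symmetric $2$-tensor into a scalar operator. After the obvious cancellations one is left with a combination of the top-order term $\Delta^2 f$, the \emph{transport} quantities $\Delta(fR)$, $\delta(f\,dR)$ and $\delta^2(f\,Ric)$, and the purely zeroth-order terms $f\Delta R$, $f|Ric|^2$ and $fR^2$; for instance the $A_n$ group contributes $A_n\big(-(n-1)\Delta^2 f-R\Delta f+\tfrac{n-2}{2}\delta(f\,dR)-f\Delta R\big)$, whose top order is $-(n-1)A_n\Delta^2 f=\tfrac12\Delta^2 f$. Next I would expand every product by the Leibniz rule and eliminate the divergences of curvature using the contracted second Bianchi identity $\nabla^i R_{ij}=\tfrac12\nabla_j R$; this rewrites $\delta(f\,dR)$, $\delta^2(f\,Ric)$ and $\Delta(fR)$ in terms of $R\Delta f$, $Ric\cdot\nabla^2 f$, $\langle dR,df\rangle$ and the zeroth-order triple. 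The target is obtained by expanding the right-hand side: since $\tfrac{n-4}{4}-\tfrac{n+4}{4}=-2$, one has $\tfrac12\big(P_g-\tfrac{n+4}{2}Q_g\big)f=\tfrac12\Delta^2 f-\tfrac12\, div_g\big[(a_nRg+b_nRic)\,df\big]-2Q_g f$, and expanding the divergence with Bianchi puts it into the same basis of terms.

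The main obstacle is purely the bookkeeping: after the Bianchi substitutions one must check that the three independent families of terms match. The zeroth-order terms should assemble exactly into $-2Q_g f=-2(A_n\Delta R+B_n|Ric|^2+C_nR^2)f$; the terms $R\Delta f$, $Ric\cdot\nabla^2 f$, $\langle dR,df\rangle$ should assemble into $-\tfrac12\, div_g[(a_nRg+b_nRic)df]$; and the top order is already $\tfrac12\Delta^2 f$. Each match reduces to a rational identity among the constants $A_n,B_n,C_n,a_n,b_n$. The representative and most delicate one governs the coefficient of $f\Delta R$: collecting contributions one needs $A_n\tfrac{4-n}{2}=\tfrac{n}{2}B_n+2(n-1)C_n$, which upon clearing denominators is the polynomial identity $(n-4)(n-2)^2=n^3-8n^2+20n-16$, valid for all $n$. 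The remaining coefficient identities are of the same flavor and are verified identically.

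As an independent check, and a conceptually cleaner route, I would exploit $L^2$ duality: pairing $\Gamma_g^* f$ against the conformal direction $h=\lambda g$ gives $\int_M \lambda\, tr_g\Gamma_g^* f\, dv_g=\int_M f\,\Gamma_g(\lambda g)\, dv_g$. The conformal linearization $\Gamma_g(\lambda g)$ can be computed from the conformal covariance of the Paneitz operator (discussed in the appendix), yielding $\Gamma_g(\lambda g)=\tfrac12\big(P_g-\tfrac{n+4}{2}Q_g\big)\lambda$; since $P_g$ is formally self-adjoint and multiplication by $Q_g$ is self-adjoint, integrating by parts and letting $\lambda$ range over all smooth functions recovers the pointwise identity without the curvature bookkeeping.
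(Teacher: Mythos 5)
Your primary argument is essentially the paper's own proof: the paper likewise takes the metric trace of the expression in Proposition \ref{prop:Gamma^*}, collects the terms $\Delta^2 f$, $f\Delta R$, $R\Delta f$, $df\cdot dR$, $Ric\cdot\nabla^2 f$ and $-2Q_gf$, and the one identity it invokes, $\tfrac{n-4}{2}A_n+\tfrac{n}{2}B_n+2(n-1)C_n=0$, is precisely your coefficient identity for $f\Delta R$ (your sample trace of the $A_n$-group, with top order $-(n-1)A_n\Delta^2 f=\tfrac12\Delta^2 f$, is also exactly what appears in the paper's computation). Where you genuinely depart from the paper is the closing duality argument, which the paper does not use: pairing $\Gamma_g^*f$ against conformal directions $h=\lambda g$, computing $\Gamma_g(\lambda g)=\tfrac12\bigl(P_g-\tfrac{n+4}{2}Q_g\bigr)\lambda$ from the conformal covariance laws (\ref{eqn:conf_Q_4}) and (\ref{eqn:conf_Q_n}), and then using formal self-adjointness of $P_g$ to move the operator across the pairing and conclude pointwise by letting $\lambda$ vary. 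That route is correct and conceptually cleaner --- it trades all the curvature bookkeeping and Bianchi substitutions for structural facts --- but note what it leans on: the transformation laws of $Q_g$ and $P_g$ (quoted but not proved in the paper's appendix), a case split between $n=4$ and $n\neq 4$ (for $n\neq 4$ one differentiates $u(t)=1+t\tfrac{n-4}{4}\lambda$ and uses $P_g1=\tfrac{n-4}{2}Q_g$), and the self-adjointness of $P_g$, which is visible from its divergence form. The paper's direct computation, by contrast, is self-contained given Proposition \ref{prop:Gamma^*}.
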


\begin{proof}
Taking trace of $\Gamma_g^* f$, we have
\begin{align*}
tr \Gamma_g^* f =& -2 Q_g f - (n-1) A_n \Delta^2 f - \left( \frac{n-4}{2} A_n + \frac{n}{2}B_n + 2(n-1)C_n \right)f \Delta R \\
&- (A_n + B_n + 2(n-1)C_n) R \Delta f - \left( \frac{n-2}{2}A_n + n B_n + 4(n-1) C_n\right) df \cdot dR \\
&- (n-2) B_n Ric \cdot \nabla^2 f\\
=&  -2 Q_g f + \frac{1}{2} \Delta^2 f - \frac{(n-2)^2 + 4}{4(n-1)(n-2)} R \Delta f - \frac{n-6}{4(n-1)} df \cdot dR + \frac{2}{n-2} Ric \cdot \nabla^2 f\\
=&  -2 Q_g f + \frac{1}{2} \left( \Delta^2 f - a_n R \Delta f - \left(a_n + \frac{1}{2}b_n\right) df \cdot dR - b_n Ric \cdot \nabla^2 f \right)\\
=& \frac{1}{2} \left( \Delta_g^2 f- div_g \left[(a_n R_g g + b_n Ric_g) df \right] \right) - 2 Q_g \\
= &\frac{1}{2} \left( P_g - \frac{n+4}{2} Q_g \right) f. 
\end{align*}

Here we used the fact that $\frac{n-4}{2} A_n + \frac{n}{2}B_n + 2(n-1)C_n =0$.
\end{proof}

Combining above calculations, we can justify that $Q$-curvature is a constant for $Q$-singular spaces using exactly the same argument in \cite{C-G-Y}. For the convenience of readers, we include a sketch of the proof as follows. For more details, please refer to \cite{C-G-Y}.

\begin{theorem}[Chang-Gursky-Yang \cite{C-G-Y}]
A $Q$-singular space $(M^n, g)$ has constant $Q$-curvature and 
\begin{align}
\frac{n+4}{2}Q_g \in Spec(P_g).
\end{align}

\end{theorem}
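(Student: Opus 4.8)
The plan is to split the statement into its two assertions --- that $Q_g$ is constant, and that $\frac{n+4}{2}Q_g \in Spec(P_g)$ --- and to observe that the second follows at once from the first together with the trace identity already recorded in the Corollary above. So the real content is the constancy of $Q_g$. Since $(M,g,f)$ is $Q$-singular we have $\Gamma_g^\ast f = 0$ for some $f \not\equiv 0$; taking the $g$-trace and invoking that Corollary gives
\begin{align*}
0 = tr_g \Gamma_g^\ast f = \frac{1}{2}\left( P_g - \frac{n+4}{2} Q_g\right) f, \qquad \text{hence} \qquad P_g f = \frac{n+4}{2} Q_g f .
\end{align*}
Once we know that $Q_g$ is a constant, this last equation says precisely that $f \not\equiv 0$ is an eigenfunction of $P_g$ with eigenvalue $\frac{n+4}{2}Q_g$, which yields $\frac{n+4}{2}Q_g \in Spec(P_g)$.

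The heart of the argument is thus to prove $dQ_g \equiv 0$, and for this I would exploit the diffeomorphism invariance of $Q$-curvature, which supplies the second, independent piece of information contained in the overdetermined system $\Gamma_g^\ast f = 0$. Since $Q$ is a natural Riemannian scalar invariant, we have $Q_{\varphi^\ast g} = \varphi^\ast Q_g$ for any diffeomorphism $\varphi$; differentiating along the flow of an arbitrary $X \in \mathscr{X}(M)$ gives the naturality identity $\Gamma_g(L_g X) = dQ_g(X)$. Pairing against $f$ and using the adjoint relation of Proposition \ref{prop:Gamma^*} together with $\Gamma_g^\ast f = 0$,
\begin{align*}
\int_M \langle f\, dQ_g,\, X\rangle\, dv_g = \int_M f\, \Gamma_g(L_g X)\, dv_g = \int_M \langle \Gamma_g^\ast f,\, L_g X\rangle\, dv_g = 0 .
\end{align*}
As $X$ is arbitrary, this forces the pointwise identity $f\, dQ_g \equiv 0$. (Equivalently, one may derive the Bianchi-type divergence identity $\delta_g \Gamma_g^\ast f = c\, f\, dQ_g$ directly from the formula in Proposition \ref{prop:Gamma^*} and read off the same conclusion, at the cost of a lengthy computation.)

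It remains to upgrade $f\, dQ_g \equiv 0$ to $dQ_g \equiv 0$ on the connected manifold $M$. If $Q_g$ were non-constant there would be a nonempty open set $W$ on which $dQ_g \neq 0$, whence $f \equiv 0$ on $W$. But $f$ solves the fourth-order elliptic equation $P_g f = \frac{n+4}{2}Q_g f$, whose principal part is that of $\Delta^2$; such operators enjoy the unique continuation property, so a solution vanishing on an open set must vanish identically. This contradicts $f \not\equiv 0$, and therefore $dQ_g \equiv 0$, i.e.\ $Q_g$ is constant. Combined with the displayed eigenvalue equation, both assertions follow.

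I expect the main obstacle to be cleanly harvesting the two distinct consequences of $\Gamma_g^\ast f = 0$: its trace produces the eigenvalue equation, while its coupling to the diffeomorphism group (equivalently its divergence) produces $f\, dQ_g = 0$. The delicate technical points are verifying the naturality identity $\Gamma_g(L_g X) = dQ_g(X)$ --- or, alternatively, computing $\delta_g \Gamma_g^\ast$ by hand from the explicit expression in Proposition \ref{prop:Gamma^*} --- and justifying unique continuation for the Paneitz-type operator so as to pass from the vanishing of $f\,dQ_g$ to the vanishing of $dQ_g$.
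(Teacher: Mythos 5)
Your proposal is correct and follows essentially the same route as the paper: the trace identity $tr_g\Gamma_g^*f=\frac{1}{2}\bigl(P_g-\frac{n+4}{2}Q_g\bigr)f$ yields the eigenvalue equation, diffeomorphism invariance paired against $f$ yields $f\,dQ_g\equiv 0$ (the paper phrases this as $f\,dQ_g=2\,\delta_g\Gamma_g^*f=0$), and unique continuation for the Paneitz-type equation upgrades this to $dQ_g\equiv 0$. The only cosmetic difference is in the last step: the paper argues via infinite-order vanishing of $f$ at a single point where $dQ_g\neq 0$ and invokes Aronszajn's Carleman estimates, whereas you use vanishing of $f$ on an open set where $dQ_g\neq 0$ together with the (weak) unique continuation property -- the same tool in a slightly different guise.
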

\begin{proof}
We only need to show $Q_g$ is a constant.

For any smooth vector field $X \in \mathscr{X}$ on $M$, we have 
\begin{align*}
\int_M \langle X,   \delta_g \Gamma_g^* f \rangle dv_g = \frac{1}{2}\int_M \langle L_X g, \Gamma_g^* f \rangle dv_g = \frac{1}{2}\int_M f\ \Gamma_g (L_X g)\ dv_g  = \frac{1}{2}\int_M \langle f dQ_g, X \rangle \ dv_g.
\end{align*}
Thus $$fdQ_g = 2 \delta_g \Gamma_g^* f = 0$$ on $M$. 

Suppose there is an $x_0 \in M$ with $f(x_0) = 0$ and $dQ_g (x_0) \neq 0$. By taking derivatives, we can see $f$ vanishes to infinite order at $x_0$. \emph{i.e.} $$\nabla^m f(x_0) = 0$$ for any $m \geq 1$. Since $f$ also satisfies that $$\mathscr{L}_g f = \frac{1}{2} \left(P_g f -  \frac{n+4}{2}Q_g f\right) = 0,$$ by applying the Carleman estimates of Aronszajn (\cite{A}), we can conclude that $f$ vanishes identically on $M$. But this contradicts to the fact that $g$ is $Q$-singular. 

Therefore, $dQ_g$ vanishes on $M$ and thus $Q_g$ is a constant. Here we obtain the proof of Theorem \ref{thm:Q_const}.
\end{proof}

\textbf{Examples of $Q$-singular spaces.}

\begin{itemize}
  \item
  Ricci flat spaces\\
  Take $f$ to be a nonzero constant, then it satisfies the $Q$-singular equation.\\

  \item
  Spheres\\
  Take $f$ to be the $(n+1)^{th}$-coordinate function $x_{n+1}$ restricted on $$S^n = \{x\in \mathbb{R}^{n+1}: |x|^2 = 1\}.$$ Then $f$ satisfies the Hessian type equation $$\nabla^2 f + g f = 0.$$ With the aid of the following Lemma \ref{Q-static_Einstein}, we can easily check that $f$ satisfies the $Q$-singular equation.\\

  \item
  Hyperbolic spaces\\
  Similarly, if we still take $f$ to be the $(n+1)^{th}$-coordinate function $x_{n+1}$ but restricted on $$H^n = \{(x',x_{n+1})\in \mathbb{R}^{n+1}: |x'|^2 - |x_{n+1}|^2 = -1, x' \in \mathbb{R}^n, x_{n+1} > 0\}.$$ Then $f$ satisfies the similar Hessian type equation $$\nabla^2 f - g f = 0,$$ and hence solves the $Q$-singular equation.
\end{itemize}

Based on the complexity of the $Q$-singular equation, one can easily see that it is very difficult to study the geometry of generic $Q$-singular spaces. However, when restricted to some special classes of Riemannian manifolds we can still get some interesting results of $Q$-singular spaces.

\begin{lemma}\label{Q-static_Einstein}
Let $(M,g)$ be an Einstein manifold and $f \in \ker \Gamma_g^*$, we have
\begin{align}
\Gamma_g^* f = A_n \left[ \nabla^2 \left(\Delta f + \Lambda_n R f
\right) + \frac{R}{n(n-1)}g \left(\Delta f + \Lambda_n R f\right)
\right] = 0,
\end{align}
where $\Lambda_n : = \frac{2}{A_n} (\frac{B_n}{n} + C_n) = - \frac{(n+2)(n-2)}{2n(n-1)} < 0$, for any $n \geq 3$.
\end{lemma}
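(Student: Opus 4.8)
The plan is to substitute the Einstein condition directly into the formula for $\Gamma_g^*$ in Proposition \ref{prop:Gamma^*}, simplify, and then recognize the outcome as the claimed Hessian-type expression modulo a pure-trace remainder that is annihilated by the kernel hypothesis. The negativity $\Lambda_n<0$ is immediate from the explicit value $\Lambda_n=-\frac{(n+2)(n-2)}{2n(n-1)}$, so the real content is the identity for $\Gamma_g^*f$.

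First I would use that on an Einstein manifold $Ric=\frac{R}{n}g$ with $R$ constant, so that $dR=0$ and $\nabla^2 R=0$. This immediately kills the terms $\frac12 g\,\delta(f\,dR)$, $\nabla(f\,dR)$ and $f\nabla^2 R$ in the $A_n$-block, and reduces the remaining two blocks via the elementary identities $\delta(fg)=-df$, $\delta^2(fg)=\Delta f$, and $(\overset{\circ}{Rm}\cdot Ric)_{jk}=\frac{R}{n}R_{jk}=\frac{R^2}{n^2}g_{jk}$. Carrying this out block by block and collecting terms by type, using the numerical identities $\frac{2B_n}{n}+2C_n=A_n\Lambda_n$ and $\frac{B_n}{n}+C_n=\frac{A_n\Lambda_n}{2}$, I expect to arrive at
\begin{align*}
\Gamma_g^* f = A_n\Big[\nabla^2\Delta f + R\Lambda_n\nabla^2 f - g\Delta^2 f - R\big(\tfrac1n + \Lambda_n\big)g\Delta f - \tfrac{R^2\Lambda_n}{n}gf\Big].
\end{align*}

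Next I compare this with the target. Writing $u:=\Delta f+\Lambda_n R f$, the claimed expression is $A_n\big[\nabla^2 u+\frac{R}{n(n-1)}gu\big]$, and a short computation shows the discrepancy is purely a multiple of $g$:
\begin{align*}
\Gamma_g^* f - A_n\Big[\nabla^2 u + \tfrac{R}{n(n-1)}gu\Big] = -A_n\Big[\Delta^2 f + R\big(\tfrac1{n-1}+\Lambda_n\big)\Delta f + \tfrac{\Lambda_n R^2}{n-1}f\Big]g.
\end{align*}
The crux is to identify the bracket on the right as a constant multiple of $\mathscr{L}_g f=tr_g\Gamma_g^* f$. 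For this I would verify the algebraic identity $-\big(a_n+\frac{b_n}{n}\big)=\frac{1}{n-1}+\Lambda_n$ and, using the Einstein value $Q_g=(\frac{B_n}{n}+C_n)R^2=\frac{A_n\Lambda_n}{2}R^2$, the relation $4Q_g=-\frac{\Lambda_n R^2}{n-1}$; together with the explicit form of the Paneitz operator on an Einstein manifold, $P_g f=\Delta^2 f-(a_n+\frac{b_n}{n})R\Delta f+\frac{n-4}{2}Q_g f$, these show the bracket equals $(P_g-\frac{n+4}{2}Q_g)f=2\mathscr{L}_g f$.

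Finally, since $f\in\ker\Gamma_g^*$ we have $\mathscr{L}_g f=tr_g\Gamma_g^* f=0$, so the trace remainder vanishes and $\Gamma_g^* f=A_n\big[\nabla^2 u+\frac{R}{n(n-1)}gu\big]$; as $\Gamma_g^* f=0$, this Hessian-type quantity is itself zero, which is exactly the assertion. The main obstacle is the bookkeeping in the first step together with the verification that the leftover pure-trace terms assemble precisely into the trace operator $\mathscr{L}_g$ — this rests on the two numerical identities above, which are what make the messy $g\Delta^2 f$ and $g\Delta f$ contributions disappear rather than surviving in the final form.
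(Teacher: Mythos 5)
Your proposal is correct and takes essentially the same route as the paper: both substitute the Einstein condition into Proposition \ref{prop:Gamma^*} to reach the identical intermediate expression $A_n\left[\nabla^2\Delta f + \Lambda_n R\nabla^2 f - g\Delta^2 f - \left(\tfrac{1}{n}+\Lambda_n\right)Rg\Delta f - \tfrac{\Lambda_n}{n}R^2 gf\right]$, and both then exploit the vanishing of $tr_g\,\Gamma_g^*f$ forced by $f\in\ker\Gamma_g^*$ to remove the pure-trace remainder. The only cosmetic difference is in how that trace information is extracted: the paper regroups the intermediate expression exactly as $A_n\left[\nabla^2 u - g\left(\Delta u + \tfrac{R}{n}u\right)\right]$ with $u=\Delta f+\Lambda_n Rf$, traces the equation $\Gamma_g^*f=0$ to get $\Delta u = -\tfrac{R}{n-1}u$, and substitutes back, whereas you identify the discrepancy from the target with $-2A_n\,g\,\mathscr{L}_g f$ via the Paneitz-operator formula on Einstein manifolds together with the identities $-\left(a_n+\tfrac{b_n}{n}\right)=\tfrac{1}{n-1}+\Lambda_n$ and $4Q_g=-\tfrac{\Lambda_n R^2}{n-1}$ (both of which do check out), which is an equivalent but slightly more roundabout use of the same trace fact.
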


\begin{proof}

Since $g$ is Einstein, \emph{i.e.} $Ric=\frac{R}{n}g$, we get
\begin{align*}
\Gamma_g^* f=&A_n \left[ -g\Delta^2f+\nabla^2\Delta f-\frac{R}{n}g\Delta f \right] -B_n\left[\frac{R}{n}\Delta (fg)+2f \overset\circ{Rm}\cdot \frac{R}{n}g + 2\frac{R}{n}\nabla\delta(fg) +\frac{R}{n}g\delta^2(fg) \right]\\
& \ \ -2C_n\left[Rg\Delta f-R\nabla^2f+\frac{R^2}{n}fg\right]\\
=&A_n \left[-g\Delta^2f+\nabla^2\Delta f-\frac{R}{n}g\Delta f\right] - 2B_n \left[ \frac{R}{n}g\Delta f+\frac{R^2}{n^2}gf - \frac{R}{n}\nabla^2f \right] \\
& \ \ -2C_n \left[ Rg\Delta f-R\nabla^2f+\frac{R^2}{n}fg \right]\\
=& A_n \left[-g\Delta^2f+\nabla^2\Delta f - \left(\frac{1}{n} + \Lambda_n\right) R g\Delta f + \Lambda_n R \nabla^2 f - \frac{\Lambda_n}{n}R^2 g f \right]\\
=& A_n \left[ \nabla^2 \left(\Delta f + \Lambda_n R f\right)  - g \left( \Delta \left(\Delta f + \Lambda_n R f\right) +\frac{R}{n}\left(\Delta f + \Lambda_n R f\right) \right) \right].
\end{align*}

By assuming $f\in \ker \Gamma_g^*$, \emph{i.e.} $\Gamma_g^*(f)=0$, when taking trace, we have
\begin{align*}
A_n \left[  - (n-1) \Delta \left(\Delta f + \Lambda_n R f\right) - R\left(\Delta f + \Lambda_n R f\right) \right] = 0.
\end{align*}

Thus,
\begin{align*}
\Delta \left(\Delta f + \Lambda_n R f\right) = - \frac{R}{n-1}\left(\Delta f + \Lambda_n R f\right).
\end{align*}

Now substitute it in the expression of $\Gamma_g^* f$, we have
$$
\Gamma_g^* f =A_n \left[ \nabla^2 \left(\Delta f + \Lambda_n R f\right) + \frac{R}{n(n-1)}g \left(\Delta f + \Lambda_n R f\right)\right] = 0,
$$
where $\Lambda_n= \frac{2}{A_n}\left(\frac{B_n}{n}+C_n\right) = - \frac{(n+2)(n-2)}{2n(n-1)} < 0$, for any $n \geq 3$.\\
\end{proof}

Next we show that a closed $Q$-singular Einstein manifold with positive scalar curvature has to be spherical:
\begin{proposition} \label{Q-static_sphere}
Let $(M^n,g)$ be a complete $Q$-singular Einstein manifold with positive scalar curvature. Then $(M^n,g)$ is isometric to the round sphere $(S^n(r), g_{_{S^n(r)}})$, with radius $r = \left(\frac{n(n-1)}{R_g}\right)^{\frac{1}{2}}$. Moreover, $\ker \Gamma_g^*$ is consisted of eigenfunctions of $(-\Delta_g)$ associated to $\lambda_1 = \frac{R_g}{n-1}> 0$ on $S^n(r)$ and hence $\dim \ker \Gamma_{g}^* = n+1$.
\end{proposition}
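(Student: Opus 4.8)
The plan is to exploit Lemma \ref{Q-static_Einstein}, which already reduces the $Q$-singular equation on an Einstein manifold to the single Hessian-type equation
\begin{align*}
\nabla^2 u + \frac{R}{n(n-1)} g\, u = 0, \qquad u := \Delta f + \Lambda_n R f,
\end{align*}
together with the trace identity $\Delta u = -\frac{R}{n-1}u$. First I would observe that this is precisely Obata's equation: on a complete Riemannian manifold, the existence of a nonconstant solution to $\nabla^2 u + c\, g\, u = 0$ with $c = \frac{R}{n(n-1)} > 0$ forces $(M,g)$ to be isometric to the round sphere of radius $r = c^{-1/2} = \left(\frac{n(n-1)}{R}\right)^{1/2}$, by Obata's theorem. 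So the crux is to verify that $u$ cannot be identically zero and is genuinely a nonconstant solution of Obata's equation, and to handle the case where $u$ happens to be constant.

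Next I would split into cases according to the behavior of $u$. If $u \equiv 0$, then $f$ satisfies $\Delta f + \Lambda_n R f = 0$, i.e. $f$ is an eigenfunction of $-\Delta_g$ with eigenvalue $\Lambda_n R$; but $\Lambda_n < 0$ and $R > 0$ force this eigenvalue to be negative, which is impossible on a closed manifold (where $-\Delta_g$ has nonnegative spectrum) unless $f$ is constant, and a nonzero constant $f$ would require $R = 0$, contradicting $R > 0$. Hence $u \equiv 0$ is incompatible with $f \not\equiv 0$, so $u \not\equiv 0$. If $u$ is a nonzero constant, then $\nabla^2 u = 0$ and the Hessian equation gives $\frac{R}{n(n-1)} g\, u = 0$, again forcing $R = 0$, a contradiction. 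Therefore $u$ is a nonconstant solution of the Hessian equation, and Obata's theorem applies to give the isometry with $(S^n(r), g_{S^n(r)})$ and the stated radius.

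Finally, for the description of $\ker \Gamma_g^*$, I would work directly on the round sphere $S^n(r)$. On the sphere, by the Lichnerowicz--Obata computation, the nonconstant solutions $u$ of Obata's equation are exactly the first nonconstant eigenfunctions of $-\Delta_g$ (the restrictions of linear coordinate functions), with eigenvalue $\lambda_1 = \frac{R}{n-1}$, forming an $(n+1)$-dimensional space. To recover $\ker \Gamma_g^*$ itself, I would show the correspondence $f \mapsto u = \Delta f + \Lambda_n R f$ is a bijection onto this eigenspace: on $S^n(r)$ one expands $f$ in spherical harmonics and checks, eigenspace by eigenspace, that the linear map $\Delta + \Lambda_n R$ sends $f$ into the $\lambda_1$-eigenspace precisely when $f$ itself lies in that same eigenspace and contributes nothing from other modes, so that $\dim \ker \Gamma_g^* = n+1$.

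I expect the main obstacle to be the last step: cleanly identifying $\ker \Gamma_g^*$ as the $\lambda_1$-eigenspace rather than merely bounding its dimension. One must check that the scalar operator $\Delta + \Lambda_n R$ acting on the spherical-harmonic decomposition does not accidentally produce additional kernel elements of $\Gamma_g^*$ coming from higher eigenvalues of $f$, and that every first eigenfunction $u$ indeed arises from some admissible $f$. This requires a careful spectral bookkeeping using the explicit eigenvalues $\lambda_k = \frac{k(k+n-1)}{r^2}$ of $-\Delta_g$ on $S^n(r)$, together with the sign of $\Lambda_n$, to rule out resonances; the earlier cases (ruling out $u \equiv 0$ and $u$ constant) are comparatively routine sign arguments.
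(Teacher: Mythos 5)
Your proposal is correct and follows essentially the same route as the paper's proof: Lemma \ref{Q-static_Einstein} yields the Obata-type equation for $u = \Delta f + \Lambda_n R f$, sign arguments (using $\Lambda_n R < 0$ and nonnegativity of $\mathrm{Spec}(-\Delta_g)$) rule out $u \equiv 0$ or $u$ constant, Obata's rigidity gives the round sphere, and non-resonance of $\Lambda_n R$ identifies $\ker \Gamma_g^*$ with the first eigenspace. The only cosmetic difference is in the last step, where the paper avoids your spherical-harmonic bookkeeping entirely: since $\Lambda_n R \notin \mathrm{Spec}(-\Delta_g)$, the operator $\Delta_g + \Lambda_n R$ is invertible and commutes with $\Delta_g + \frac{R}{n-1}$, so $\bigl(\Delta_g + \Lambda_n R\bigr)\bigl(\Delta_g + \tfrac{R}{n-1}\bigr)f = \bigl(\Delta_g + \tfrac{R}{n-1}\bigr)u = 0$ immediately forces $f \in E_{\lambda_1}$, with no explicit eigenvalue formulas needed.
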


\begin{proof}

Note that $\Lambda_n < 0$ and $Spec (-\Delta_g)$ consisted of nonnegative real numbers, then $$\Lambda_n R_g \not\in Spec ( - \Delta_g).$$

Let $f \in \ker \Gamma_g^*$, $f\not\equiv 0$ and $\varphi := \Delta_g f + \Lambda_n R_g f$, then
\begin{align*}
\varphi \not\equiv 0.
\end{align*}

Therefore, by Lemma \ref{Q-static_Einstein}, we have the following equation
\begin{align}\label{Obata_type_eqn}
\nabla^2\varphi=-\frac{R_g}{n(n-1)}\varphi g
\end{align}
with a nontrivial solution $\varphi$.\\

Taking trace, we get
\begin{align}
\Delta_g \varphi = - \frac{R_g}{n-1}\varphi.
\end{align}
\emph{i.e.} $\frac{R_g}{n-1}$ is an eigenvalue of $-\Delta_g$.\\

On the other hand, by \emph{Lichnerowicz-Obata's Theorem}, the first nonzero eigenvalue of $(- \Delta_g)$ satisfies
$$\lambda_1 \geq \frac{R_g}{n-1}$$
with equality if and only if $(M^n,g)$ is isometric to the round sphere $(S^n(r), g_{S^n(r)})$ with radius $r = \left(\frac{n(n-1)}{R} \right)^{\frac{1}{2}}$. Hence the first part of the theorem follows.\\

Since $\Lambda_n R_g \not\in Spec ( - \Delta_g)$, it implies that the operator $\Delta_g + \Lambda_n R_g$ is invertible. On the other hand,
\begin{align*}
\left( \Delta_g + \Lambda_n R_g \right) \left(\Delta_g + \frac{R_g}{n-1}\right) f = \left(\Delta_g + \frac{R_g}{n-1}\right) \left( \Delta_g + \Lambda_n R_g \right) f = \left(\Delta_g + \frac{R_g}{n-1}\right) \varphi=0.
\end{align*}
Thus $$\left(\Delta_g + \frac{R_g}{n-1}\right) f = 0.$$ \emph{i.e.} $f$ is an eigenfunction associated to $\lambda_1 = \frac{R_g}{n-1}$.\\

Therefore, $\ker \Gamma_g^*$ can be identified by the eigenspace associated to the first nonzero eigenvalue $\lambda_1> 0$ of $(-\Delta_g)$ on $S^n(r)$ and hence $\dim\Gamma_g^* = n+1$.

\end{proof}

As for Ricci flat ones, we have:

\begin{theorem}\label{Static_Ricci_flat}
Suppose $(M^n,g)$ is a $Q$-singular Riemannian manifold. If $(M,g)$ admits an nonzero constant potential $f \in \ker \Gamma_g^*$, then $(M,g)$ is $Q$-flat, \emph{i.e.} the $Q$-curvature vanishes identically. Furthermore, suppose $(M,g)$ is a closed $Q$-singular Einstein manifold, then $(M,g)$ is Ricci flat if and only if $\ker \Gamma_g^*$ is consisted of constant functions.
\end{theorem}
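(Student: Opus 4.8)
The plan is to split the statement into its two assertions and treat each by taking a trace of the $Q$-singular equation $\Gamma_g^* f = 0$, reducing the full tensorial condition to a manageable scalar one.

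For the first assertion, I would feed a nonzero constant potential $f \equiv c$ into the trace identity from the preceding Corollary, namely $\mathrm{tr}_g \Gamma_g^* f = \mathscr{L}_g f = \tfrac12\bigl(P_g - \tfrac{n+4}{2}Q_g\bigr)f$. Since $df$, $\Delta f$ and $\nabla^2 f$ all vanish for a constant, every derivative term in $P_g$ drops and $P_g c = \tfrac{n-4}{2}Q_g c$; moreover the algebraic identity $\tfrac{n-4}{2}A_n + \tfrac{n}{2}B_n + 2(n-1)C_n = 0$ (the same one invoked in the Corollary) kills any stray $f\,\Delta R$ contribution. This leaves $\mathrm{tr}_g\Gamma_g^* c = -2 Q_g c$ pointwise. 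Because $\Gamma_g^* c = 0$ forces its trace to vanish and $c \neq 0$, I conclude $Q_g \equiv 0$. Note that this argument is purely pointwise, so no compactness hypothesis is needed for the first assertion.

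For the second assertion I handle the two implications separately, now invoking Lemma \ref{Q-static_Einstein}. If $(M,g)$ is Ricci flat then $R \equiv 0$, and the Lemma collapses to $\Gamma_g^* f = A_n \nabla^2(\Delta f) = 0$, i.e. $\nabla^2(\Delta f) = 0$; tracing gives $\Delta(\Delta f) = 0$, so on a closed manifold $\Delta f$ must be constant, and integrating $\int_M \Delta f\, dv_g = 0$ forces $\Delta f \equiv 0$ and hence $f$ constant. Combined with the fact that constants always lie in $\ker\Gamma_g^*$ on a Ricci-flat space (the first of the listed examples), this shows $\ker\Gamma_g^*$ consists exactly of constants. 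Conversely, if $\ker\Gamma_g^*$ consists of constants then $Q$-singularity provides a nonzero constant potential, so the first assertion gives $Q_g \equiv 0$; for an Einstein metric one has $Q_g = \bigl(\tfrac{B_n}{n}+C_n\bigr)R^2 = \tfrac{A_n\Lambda_n}{2}R^2$, and since $A_n\Lambda_n \neq 0$ this forces $R = 0$, so Einstein together with scalar-flatness yields Ricci flat.

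The computations are all short, so there is no serious analytic obstacle; the only points requiring care are invoking the coefficient identity so that the first trace genuinely reduces to $-2 Q_g c$, and recognizing that closedness is essential in the Ricci-flat direction to pass from $\nabla^2(\Delta f)=0$ to $f$ constant (on a noncompact complete manifold $\Delta f$ could be a nonzero constant). Everything else is bookkeeping with the explicit constants $A_n,B_n,C_n,\Lambda_n$.
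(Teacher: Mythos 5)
Your proposal is correct and follows essentially the same route as the paper's own proof: the first assertion via the pointwise trace identity $\mathrm{tr}_g\,\Gamma_g^*\,c = -2Q_g\,c$ for a constant $c\neq 0$, the Ricci-flat direction by reducing to $\Delta^2 f = 0$ on a closed manifold (the paper gets this from $\mathscr{L}_g f = \tfrac{1}{2}\Delta^2 f = 0$ rather than from Lemma \ref{Q-static_Einstein} with $R=0$, but it is the same trace computation), and the converse by noting that an Einstein metric with $Q_g \equiv 0$ must have $R=0$ because $\tfrac{B_n}{n}+C_n = \tfrac{A_n\Lambda_n}{2} \neq 0$ (the paper phrases this through $\int_M Q_g\,dv_g$, which is equivalent here since $R$ is constant on a closed Einstein manifold of dimension $n\geq 3$). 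No gaps.
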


\begin{proof}

Without lose of generality, we can assume that $1 \in \ker \Gamma_g^*$. We have
\begin{align*}
tr\Gamma^{*}_{g} 1 = \mathscr{L}_g 1  = \frac{1}{2} \left( P_g 1 - \frac{n+4}{2}Q_g\right) =  - 2 Q_g = 0,
\end{align*}
which implies that $Q_g=0$.\\

Now assume $(M,g)$ is a closed $Q$-singular Einstein manifold.\\

We have
\begin{align*}
\int_M Q_g dv_g &= \int_M \left(A_n \Delta_{g} R + B_n |Ric|^2 + C_n R^2\right) dv_g \\
&= \left(\frac{B_n}{n} + C_n \right) R^2\ Vol_g(M) \\
&= \frac{(n+2)(n-2)}{8n(n-1)^2}R^2\ Vol_g(M).
\end{align*}

Suppose $1 \in \ker \Gamma_g^*$, then $Q_g = 0$ and hence 
\begin{align*}
R = 0 ,
\end{align*}
That means $(M, g)$ is Ricci flat.\\

On the other hand, if $(M,g, f)$ is Ricci flat, 
\begin{align*}
\mathscr{L}_g f = \frac{1}{2}\Delta^2 f = 0.
\end{align*}

Since $M$ is compact without boundary, $f$ is a nonzero constant on $M$.

\end{proof}

\begin{remark}
It was shown in \cite{C-G-Y} that for a closed $4$-dimensional $Q$-singular space $(M, g)$, $1 \in \ker \Gamma_g^*$ if and only if $g$ is Bach flat with vanishing $Q$-curvature. But this theorem doesn't generalized to other dimensions automatically. In fact, for generic dimensions, $1 \in \ker \Gamma_g^*$ doesn't imply $g$ is Bach flat by direct calculations, although it is still $Q$-flat as shown above. 
\end{remark}

Theorem \ref{Classificaition_Q_singular_Einstein} follows from combining Propositions \ref{Q-static_sphere} and \ref{Static_Ricci_flat}.\\

For Einstein manifolds with negative scalar curvature, generically they are not $Q$-singular:
\begin{proposition}\label{Q-hyperbolic}
Let $(M, g)$ be a closed Einstein manifold with scalar curvature $R < 0$. Suppose $\Lambda_n R \not\in Spec(-\Delta_g)$, then $$\ker \Gamma_g^* = \{0\}.$$ \emph{i.e.} $(M, g)$ is not $Q$-singular.
\end{proposition}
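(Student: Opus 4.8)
The plan is to reduce the vanishing of $\ker\Gamma_g^*$ to two separate invertibility statements, closely paralleling the sphere case in Proposition \ref{Q-static_sphere} but now exploiting the sign of $R$. Let $f \in \ker\Gamma_g^*$ be arbitrary and set $\varphi := \Delta_g f + \Lambda_n R f$. Since $g$ is Einstein, Lemma \ref{Q-static_Einstein} applies and yields the Hessian-type identity
\begin{align*}
\nabla^2\varphi = -\frac{R}{n(n-1)}\,g\,\varphi.
\end{align*}

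First I would take the trace of this identity to obtain $\Delta_g\varphi = -\frac{R}{n-1}\varphi$, equivalently $-\Delta_g\varphi = \frac{R}{n-1}\varphi$. The key observation is that $R < 0$ forces $\frac{R}{n-1} < 0$, while every eigenvalue of $-\Delta_g$ on a closed manifold is nonnegative; hence $\varphi$ cannot be a nontrivial eigenfunction, and we conclude $\varphi \equiv 0$. In other words, the operator $\Delta_g + \frac{R}{n-1}$ is automatically invertible when $R<0$, with no need for any analogue of the Lichnerowicz-Obata step used in the positive case.

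Next, $\varphi \equiv 0$ reads $(\Delta_g + \Lambda_n R)f = 0$. Here the hypothesis $\Lambda_n R \notin Spec(-\Delta_g)$ enters decisively: it says precisely that $\Delta_g + \Lambda_n R$ is invertible, so $f \equiv 0$. Since $f$ was an arbitrary element of $\ker\Gamma_g^*$, this gives $\ker\Gamma_g^* = \{0\}$, i.e. $(M,g)$ is not $Q$-singular.

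I do not anticipate a serious obstacle, as the whole argument is driven by Lemma \ref{Q-static_Einstein} together with two sign/spectrum observations. The only point requiring care is verifying that the two invertibility conditions behave as claimed: because $\Lambda_n<0$ and $R<0$ we have $\Lambda_n R>0$, so $\Lambda_n R$ genuinely could lie in $Spec(-\Delta_g)$, which is exactly why the generic hypothesis $\Lambda_n R\notin Spec(-\Delta_g)$ is imposed; by contrast $\frac{R}{n-1}<0$ is forced off the spectrum for free, which is what makes the negative-curvature case cleaner than its positive counterpart.
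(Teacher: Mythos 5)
Your proposal is correct and follows essentially the same route as the paper: both apply Lemma \ref{Q-static_Einstein} to get the Hessian identity for $\varphi = \Delta_g f + \Lambda_n R f$, take the trace to conclude $\varphi \equiv 0$ since $\frac{R}{n-1} < 0$ cannot lie in the nonnegative spectrum of $-\Delta_g$, and then invoke the hypothesis $\Lambda_n R \notin Spec(-\Delta_g)$ to force $f \equiv 0$. Your closing remark about why the hypothesis is genuinely needed (since $\Lambda_n R > 0$ when $R<0$) is a nice observation consistent with the paper's framing of this case as ``generic.''
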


\begin{proof}
For any smooth function $f\in  \ker \Gamma_g^*$, let $\varphi := \Delta f + \Lambda_n R f$. By Lemma \ref{Q-static_Einstein}, we have $$\nabla^2 \varphi + \frac{R}{n(n-1)}g\varphi = 0.$$

Taking trace,
$$\Delta \varphi + \frac{R}{n-1}\varphi = 0.$$
Thus $$\varphi = \Delta f + \Lambda_n R f = 0$$ identically on $M$, since $0 > \frac{R}{n-1} \not\in spec(-\Delta_g)$. Thus $$\Delta f = - \Lambda_n R f .$$ 

By assuming $\Lambda_n R \not\in Spec(-\Delta_g)$, we conclude that $f \equiv 0$. That means, $\ker \Gamma_g^*$ is trivial.

\end{proof}


\section{Stability of Q-curvature}

In this section, we will discuss the linearized stability of $Q$-curvature on closed manifolds.
As main tools, we need following key results. Proofs can be found in referred articles.

\begin{theorem}[Splitting Theorem (\cite{B-E, F-M})]\label{Splitting_Theorem}
Let $(M,g)$ be a closed Riemannian manifold, $E$ and $F$ be vector bundles on $M$. Let $D : C^\infty (E) \rightarrow C^\infty(F)$ be a $k^{th}$-order differential operator and $D^* : C^\infty(F) \rightarrow C^\infty(E)$ be its $L^2$-formal adjoint operator.

For $k \leq s\leq \infty$ and $1< p < \infty$, let $D_s : W^{s,p}(E) \rightarrow W^{s-k,p}(F)$ and $D_s^* : W^{s,p}(F) \rightarrow W^{s-k,p}(E)$ be the bounded linear operators by extending $D$ and $D^*$ respectively.

Assume that $D$ or $D^*$ has injective principal symbols, then
\begin{align}
W^{s,p}(F) =  \Ima D_{s+k} \oplus \ker D_s^*.
\end{align}
Moreover,
\begin{align}
C^\infty(F) = \Ima D \oplus \ker D^*.
\end{align}
\end{theorem}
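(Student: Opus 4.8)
The plan is to deduce the statement from the standard elliptic package — elliptic regularity, the Fredholm alternative, and the resulting $L^2$-orthogonal (Hodge-type) decomposition — applied to one auxiliary self-adjoint elliptic operator manufactured from $D$. Since $(D^*)^* = D$, the hypothesis ``$D$ or $D^*$ has injective principal symbol'' really produces two companion splittings: injectivity of $\sigma(D^*)$ yields the stated decomposition of $W^{s,p}(F)$, while injectivity of $\sigma(D)$ yields the mirror decomposition $C^\infty(E) = \Ima D^* \oplus \ker D$ by interchanging the roles of $(D,E)$ and $(D^*,F)$. So without loss of generality I would assume that $D^*$ has injective principal symbol, equivalently that $\sigma(D)(\xi)\colon E_x \to F_x$ is surjective for every nonzero covector $\xi$. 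The key object is $L := DD^*\colon C^\infty(F)\to C^\infty(F)$, a formally self-adjoint operator of order $2k$ whose principal symbol is $\sigma(L)(\xi) = (-1)^k\,\sigma(D)(\xi)\,\sigma(D)(\xi)^*$. Surjectivity of $\sigma(D)(\xi)$ makes this positive semidefinite symbol an isomorphism of $F_x$ for each $\xi\neq 0$, so $L$ is elliptic (indeed nonnegative and self-adjoint).

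Granting ellipticity, I would invoke the standard consequences on the closed manifold $M$: the kernel $\ker L \subset C^\infty(F)$ is finite dimensional and independent of $(s,p)$ by elliptic regularity, $L$ has closed range, and the self-adjoint Fredholm alternative gives, for each admissible $(s,p)$, the $L^2$-orthogonal splitting
\[
W^{s,p}(F) = \ker L \,\oplus\, \Ima\big(L_{s+2k}\big), \qquad L_{s+2k}\colon W^{s+2k,p}(F)\to W^{s,p}(F),
\]
in which the projection onto the finite-dimensional summand $\ker L$ has a smooth Schwartz kernel and is therefore bounded on every $W^{s,p}$.

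The next step is to identify the two summands with the desired ones. First, $\ker L = \ker D^*$: if $Lv=0$ then $0 = \langle DD^*v, v\rangle_{L^2} = \|D^*v\|_{L^2}^2$, forcing $D^*v=0$, and the converse is immediate; elliptic regularity for $L$ then gives $\ker D_s^* = \ker L$ for every $s$. Second, $\Ima(L_{s+2k}) = \Ima D_{s+k}$: the inclusion $\subseteq$ holds because $D^*\big(W^{s+2k,p}(F)\big)\subset W^{s+k,p}(E)$, while integration by parts gives $\langle Du, w\rangle_{L^2} = \langle u, D^*w\rangle_{L^2} = 0$ for all $w\in\ker D^*$, so $\Ima D_{s+k}$ lies in the $L^2$-orthogonal complement of $\ker L$, which by the orthogonal splitting above is precisely $\Ima(L_{s+2k})$; this yields the reverse inclusion and hence equality. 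Together these give $W^{s,p}(F) = \Ima D_{s+k}\oplus\ker D_s^*$.

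Finally, for the smooth statement I would run the decomposition at a fixed large $s$: given $w\in C^\infty(F)$, write $w - P_{\ker}w = L\phi$ with $\phi\in W^{s+2k,p}(F)$; since both $w$ and the smoothing projection $P_{\ker}w$ are smooth, ellipticity of $L$ forces $\phi\in C^\infty(F)$, whence $w = D(D^*\phi) + P_{\ker}w$ exhibits $w$ in $\Ima D \oplus \ker D^*$, the sum being direct by $L^2$-orthogonality. I expect the only genuinely delicate point to be the passage to the $L^p$-based scale: one must confirm that the parametrix estimates underlying closed range, together with the $L^2$-orthogonal projection onto the finite-dimensional kernel, behave well on $W^{s,p}$ for all $1<p<\infty$ — which is exactly where the smoothing/parametrix properties of the elliptic operator $L$ are used, and which is why the hypothesis is phrased in terms of the principal symbol rather than of $D$ itself.
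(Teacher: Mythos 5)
Your treatment of the case where $\sigma(D^*)$ is injective is correct and is the standard argument: $L=DD^*$ is elliptic and formally self-adjoint, $\ker L=\ker D^*$ is finite-dimensional and smooth, the Fredholm alternative on the $W^{s,p}$ scale gives $W^{s,p}(F)=\ker D^*\oplus \Ima L_{s+2k}$, and your two inclusions identify $\Ima L_{s+2k}$ with $\Ima D_{s+k}$. (The paper itself gives no proof -- it quotes the result from Berger--Ebin and Fischer--Marsden -- and those sources argue exactly along these lines.)

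However, your opening reduction contains a genuine gap. The conclusion of the theorem is \emph{not} symmetric in $D$ and $D^*$: under \emph{either} hypothesis it asserts the same splitting of sections of $F$, namely $W^{s,p}(F)=\Ima D_{s+k}\oplus\ker D_s^*$. Your ``without loss of generality'' step converts the case ``$\sigma(D)$ injective'' into the mirror statement $W^{s,p}(E)=\Ima D^*_{s+k}\oplus\ker D_s$, which is a different theorem, not the one to be proved. Moreover, in that case your machinery cannot be transplanted: when only $\sigma(D)$ is injective, $DD^*$ is not elliptic and $\ker D^*$ is typically infinite-dimensional and contains non-smooth sections (for example $DX=\tfrac12 L_Xg$ with $D^*=\delta_g$, where $\ker\delta_g$ is the infinite-dimensional space of divergence-free tensors), so there is no finite-dimensional smoothing projection onto it. The missing argument runs on $E$ instead: $P:=D^*D$ is elliptic and self-adjoint with $\ker P=\ker D$ finite-dimensional and smooth; given $f\in W^{s,p}(F)$, one has $\langle D^*f,w\rangle=\langle f,Dw\rangle=0$ for every $w\in\ker D$, so the Fredholm alternative for $P$ produces $u\in W^{s+k,p}(E)$ with $D^*Du=D^*f$; then $v:=f-Du\in\ker D_s^*$ and $f=Du+v$, while directness follows because $Du\in\ker D^*$ forces $D^*Du=0$, hence $u$ is smooth by elliptic regularity and $\|Du\|_{L^2}^2=\langle D^*Du,u\rangle=0$. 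This omission is not academic for the paper at hand: Corollary \ref{Canonical_decomposition_S_2} (the canonical decomposition of $S_2(M)$) invokes the theorem precisely in this case, where the Killing-type operator has injective symbol but its adjoint $\delta_g$ does not, whereas only the application to $\Gamma_{\bar g}$, $\Gamma_{\bar g}^*$ in Theorem \ref{Q_stability} falls under the case you actually proved.
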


In particular, take the vector bundle $F$ to be all symmetric 2-tensors $S_2(M)$, we have

\begin{corollary}[Canonical decomposition of $S_2$ (\cite{B-E, F-M})]\label{Canonical_decomposition_S_2}
Let $(M,g)$ be a closed Riemannian manifold, then the space of symmetric 2-tensors can be decomposed into
\begin{align}
S_2(M) = \{ L_X g : X \in \mathscr{X}(M)\} \oplus \ker \delta_g.
\end{align}
\end{corollary}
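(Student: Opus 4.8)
The plan is to obtain the decomposition as an immediate application of the Splitting Theorem (Theorem \ref{Splitting_Theorem}), with a judicious choice of operator. The natural candidate is the first-order differential operator
\begin{align*}
D = \tfrac{1}{2} L_g : \mathscr{X}(M) \rightarrow S_2(M), \quad X \mapsto \tfrac{1}{2} L_X g,
\end{align*}
whose image is precisely $\{L_X g : X \in \mathscr{X}(M)\}$, the factor $\tfrac{1}{2}$ being irrelevant for the image. Taking $E$ to be the tangent bundle and $F$ the bundle of symmetric $2$-tensors, so that $C^\infty(E) = \mathscr{X}(M)$ and $C^\infty(F) = S_2(M)$, the $L^2$-formal adjoint of $D$ is exactly $\delta_g$ as recorded in Section 2, so that $\ker D^* = \ker \delta_g$. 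Hence, once the hypotheses of Theorem \ref{Splitting_Theorem} are verified, its conclusion $C^\infty(F) = \Ima D \oplus \ker D^*$ reads precisely as $S_2(M) = \{L_X g\} \oplus \ker \delta_g$.

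The one hypothesis that requires checking is that $D$ (or $D^*$) has injective principal symbol. First I would compute the principal symbol of $D$ at a nonzero covector $\xi$ by formally replacing each covariant derivative $\nabla$ with $\xi$, which gives the symmetrization map
\begin{align*}
\sigma_\xi(D)(X)_{ij} = \tfrac{1}{2}\left( \xi_i X_j + \xi_j X_i \right).
\end{align*}
To see that this is injective in $X$ for $\xi \neq 0$, I would suppose $\xi_i X_j + \xi_j X_i = 0$, contract with $\xi^i \xi^j$ to obtain $|\xi|^2 (\xi \cdot X) = 0$ and hence $\xi \cdot X = 0$, and then contract the same identity with $\xi^i$ to obtain $|\xi|^2 X_j = 0$, forcing $X = 0$. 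Thus the principal symbol of $D$ is injective everywhere on $T^*M \setminus \{0\}$, as required.

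With this verified, Theorem \ref{Splitting_Theorem} applies directly and yields the claimed splitting. The two summands are automatically $L^2$-orthogonal, since integration by parts together with the sign convention $(\delta_g h)_i = -\nabla^j h_{ij}$ gives $\langle L_X g, h \rangle_{L^2} = 2 \langle X, \delta_g h \rangle_{L^2}$, which vanishes whenever $h \in \ker \delta_g$. I do not anticipate any serious obstacle: the entire argument is a formal consequence of the general splitting machinery, and the only genuine computation is the elementary symbol injectivity above. The sole point deserving care is bookkeeping, namely confirming that the adjoint of $\tfrac{1}{2} L_g$ is indeed $\delta_g$ under the paper's conventions, which is already established in Section 2.
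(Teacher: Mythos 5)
Your proposal is correct and follows essentially the same route as the paper: the paper obtains this corollary by specializing its Splitting Theorem to $F = S_2(M)$ with the operator $X \mapsto \tfrac{1}{2}L_X g$, whose $L^2$-formal adjoint is $\delta_g$ by the convention fixed in Section 2. Your verification of the injectivity of the principal symbol $\sigma_\xi(D)(X)_{ij} = \tfrac{1}{2}(\xi_i X_j + \xi_j X_i)$ is exactly the detail the paper leaves implicit, and it is carried out correctly.
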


Another result we need is

\begin{theorem}[Generalized Inverse Function Theorem (\cite{Gel'man})]\label{Generalized_Inverse_Function_Theorem}
Let $X$, $Y$ be Banach spaces and $f : U_{x_0} \rightarrow Y$ be a continuously differentiable map with $f(x_0) = y_0$, where $U_{x_0} \subset X$ is a neighborhood of $x_0$. Suppose the derivative $D f (x_0) : X \rightarrow Y$ is a surjective bounded linear map. Then there exists $V_{y_0} \subset Y$, a neighborhood of $y_0$, and a continuous map $\varphi : V_{y_0} \rightarrow U_{x_0}$ such that
\begin{align*}
f(\varphi(y)) = y, \ \ \ \forall y\in V_{y_0};
\end{align*}
and
\begin{align*}
\varphi(y_0) = x_0.
\end{align*}

\end{theorem}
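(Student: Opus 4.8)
The plan is to prove this as an instance of the Lyusternik--Graves theorem: combine a \emph{quantitative} open mapping estimate at the linear level with a Newton-type iteration at the nonlinear level. The only structural facts I use about $f$ are that it is $C^1$ near $x_0$ and that $A := Df(x_0)$ is a bounded surjection; all the leverage comes from surjectivity of the \emph{linear} map $A$ together with the open mapping theorem. Concretely, I would first extract a bounded right inverse at the linear level: since $A : X \to Y$ is a bounded surjection between Banach spaces, the open mapping theorem furnishes a constant $c>0$ such that for every $v \in Y$ there exists $u \in X$ with $Au = v$ and $\|u\|_X \le c\|v\|_Y$. This (possibly nonlinear) selection plays the role of a bounded right inverse and avoids requiring a bounded linear inverse, which one gets only when $\ker A$ is complemented. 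Next I would control the nonlinear remainder: because $Df$ is continuous, for any prescribed $\varepsilon>0$ there is a radius $r>0$ with $\|Df(x)-A\|\le\varepsilon$ (operator norm) for $\|x-x_0\|_X\le r$, and integrating along segments yields
\[
\|f(x)-f(x')-A(x-x')\|_Y \le \varepsilon\,\|x-x'\|_X, \qquad x,x'\in \overline{B}(x_0,r).
\]

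The core is then a Newton-like iteration solving $f(x)=y$ for $y$ near $y_0$. Starting from $x_0$, given $x_k$ I choose via the quantitative right inverse an increment $u_k$ with $Au_k = y-f(x_k)$ and $\|u_k\|\le c\|y-f(x_k)\|$, and set $x_{k+1}=x_k+u_k$. Writing $y-f(x_{k+1}) = -\bigl(f(x_{k+1})-f(x_k)-Au_k\bigr)$ and applying the remainder estimate gives $\|y-f(x_{k+1})\|\le \varepsilon c\,\|y-f(x_k)\|$. Fixing $\varepsilon$ so small that $\varepsilon c \le \tfrac12$ forces the residuals to decay geometrically, so $\sum_k\|u_k\|\le 2c\|y-y_0\|$; hence $(x_k)$ is Cauchy, its limit $x$ satisfies $f(x)=y$ with $\|x-x_0\|\le 2c\|y-y_0\|$, and this stays inside $\overline{B}(x_0,r)$ once $\|y-y_0\|$ is small. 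Setting $\varphi(y):=x$ and $\varphi(y_0)=x_0$ produces $f\circ\varphi = \mathrm{id}$ on a neighborhood $V_{y_0}$.

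The main obstacle is not existence of solutions but \textbf{continuity of the selection} $\varphi$: the iteration yields \emph{some} solution for each $y$, yet the increments $u_k$ come from the open mapping theorem and need not depend continuously on $y$. The cleanest repair, and the one relevant to this paper's applications, is to assume $\ker A$ is complemented (automatic in the Hilbert setting and available for the Sobolev/Hölder spaces arising from elliptic operators): split $X=\ker A\oplus X_1$ with $X_1$ closed, note that $A|_{X_1}:X_1\to Y$ is a bounded bijection, hence a toplinear isomorphism by the open mapping theorem, and apply the \emph{classical} Banach-space inverse function theorem to $g(x_1):=f(x_0+x_1)$, whose differential $Dg(0)=A|_{X_1}$ is invertible. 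This gives a genuinely $C^1$ local inverse $g^{-1}$, and $\varphi := x_0 + g^{-1}$ is even continuously differentiable.

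For the theorem in full Banach generality, where $\ker A$ may fail to be complemented, I would instead upgrade the iteration to yield metric regularity of $f$ at $(x_0,y_0)$: running the same residual-reduction estimates from a nearby base point shows the solution set-valued map $y\mapsto f^{-1}(y)$ is Lipschitz in the Hausdorff sense with modulus $2c$, and a continuous single-valued selection $\varphi$ is then obtained by invoking a selection principle (Michael's theorem, or a consistent measurable/continuous choice of the open-mapping right inverse). This last step is exactly the delicate part handled in the cited reference, whereas for the uses in this article the complemented-kernel route above is both sufficient and stronger, producing a differentiable right inverse rather than merely a continuous one.
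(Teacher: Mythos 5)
The paper itself gives no proof of this statement: it is imported verbatim as a tool, with the proof deferred to the cited reference \cite{Gel'man}. So your proposal can only be measured against the standard argument, and its core --- the quantitative open-mapping estimate for $A=Df(x_0)$ plus the Graves/Newton iteration $x_{k+1}=x_k+u_k$ with $Au_k=y-f(x_k)$ --- is exactly that standard route; your residual estimate and the resulting bound $\|\varphi(y)-x_0\|\le 2c\|y-y_0\|$ are correct. You also correctly isolate the one genuine difficulty: continuity of the selection $y\mapsto\varphi(y)$. Your complemented-kernel argument is a complete and correct proof under that extra hypothesis, and it does cover every use of the theorem in this paper: here $A=\Gamma_{\bar g}$ admits the elliptic, formally self-adjoint operator $\Gamma_{\bar g}\Gamma_{\bar g}^*$, whose kernel equals $\ker\Gamma_{\bar g}^*=\{0\}$ in the non-$Q$-singular setting, so $B=\Gamma_{\bar g}^*(\Gamma_{\bar g}\Gamma_{\bar g}^*)^{-1}$ is a bounded linear right inverse, $BA$ is a bounded projection, and $\ker A$ is complemented.

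The gap is in your full-generality step, and you half-acknowledge it yourself. Michael's selection theorem requires closed \emph{convex} values, and the fibers $f^{-1}(y)$ of a nonlinear map are not convex, so you cannot extract a continuous single-valued selection from $y\mapsto f^{-1}(y)$ this way; metric regularity of $f$ alone does not produce a continuous right inverse. The correct repair is to invoke the selection principle at the \emph{linear} level only: the set-valued map $v\mapsto A^{-1}(v)$ has affine (hence convex) fibers, and Michael's theorem applied there --- this is precisely the Bartle--Graves theorem --- yields a continuous, positively homogeneous (generally nonlinear) right inverse $B$ of $A$ with $\|B(v)\|\le c\|v\|$. Now rerun your iteration with $u_k:=B\bigl(y-f(x_k)\bigr)$. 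By induction each $x_k(\cdot)$ is continuous on $V_{y_0}$, your geometric decay $\|u_k(y)\|\le c\,2^{-k}\|y-y_0\|$ makes the convergence $x_k\to\varphi$ uniform, and a uniform limit of continuous maps is continuous. This closes the argument in arbitrary Banach spaces, with no complementation assumption, and is essentially the proof in the cited reference.
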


Combining Corollary \ref{Canonical_decomposition_S_2} and Theorem \ref{Generalized_Inverse_Function_Theorem}, we obtain \emph{Ebin's Slice Theorem}(\cite{Ebin}), which we will use later in this article.

\begin{theorem}[Slice Theorem (\cite{Ebin, F-M})]\label{Slice_Theorem}
Let $(M,\bar{g})$ be a Riemannian manifold. For $p> n$, suppose that $g$ is also a Riemannian metric on $M$ and $||g - \bar{g}||_{W^{2,p}(M,\bar{g})}$ is sufficiently small, then there exists a diffeomorphism $\varphi \in \mathscr{D} (M)$ such that $h := \varphi^* g - \bar{g}$ satisfies that $\delta_{\bar{g}} h = 0$ and moreover,
$$||h||_{W^{2,p}(M,\bar{g})} \leq N ||g - \bar{g}||_{W^{2,p}(M,\bar{g})},$$
where $N$ is a positive constant only depends on $(M, \bar g)$.
\end{theorem}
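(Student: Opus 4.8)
\section*{Proof proposal}

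The plan is to recognize this statement as Ebin's Slice Theorem and to obtain it by combining the canonical decomposition of Corollary \ref{Canonical_decomposition_S_2} with the Generalized Inverse Function Theorem \ref{Generalized_Inverse_Function_Theorem}. The geometric idea is to gauge-fix $g$ by a diffeomorphism so that its difference from $\bar g$ lands in the slice $\bar g + \ker \delta_{\bar g}$, which is the analytic incarnation of the complementary summand appearing in Corollary \ref{Canonical_decomposition_S_2}. Throughout one works in Sobolev completions $W^{2,p}$, and the hypothesis $p>n$ is kept in reserve to guarantee enough regularity for the constructions below.

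First I would parametrize diffeomorphisms near the identity by vector fields, say $\varphi_X := \exp_{\bar g}(X)$ for small $X \in \mathscr{X}(M)$ (or by the time-one flow of $X$), so that $\varphi_0 = \mathrm{id}$. Define the gauge map
$$\Psi(X, g) := \delta_{\bar g}\left( \varphi_X^* g - \bar g \right),$$
regarded as a map between the appropriate Sobolev spaces, with $\Psi(0,\bar g) = 0$. Differentiating in $X$ at $(0,\bar g)$ produces the second-order operator $Y \mapsto \delta_{\bar g} L_Y \bar g$. Since $\delta_{\bar g}$ is the $L^2$-formal adjoint of $Y \mapsto \tfrac12 L_Y \bar g$, this operator is $L^2$-self-adjoint and nonnegative, its kernel is exactly the space of Killing fields of $\bar g$, and by elliptic Fredholm theory it restricts to an isomorphism onto the $L^2$-orthogonal complement of the Killing fields.

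The observation that makes the scheme close up is that $\delta_{\bar g}(\varphi^* g - \bar g)$ is automatically $L^2$-orthogonal to every Killing field $K$, because $\langle \delta_{\bar g} h, K \rangle = \langle h, \tfrac12 L_K \bar g \rangle = 0$. Hence $\Psi$ already takes values in the complement of the Killing fields, on which its linearization at $(0,\bar g)$ is an isomorphism. I would therefore apply Theorem \ref{Generalized_Inverse_Function_Theorem} with $X$ ranging over this complement: for every $g$ sufficiently $W^{2,p}$-close to $\bar g$ there is a vector field $X = X(g)$, depending continuously on $g$ with $X(\bar g)=0$, solving $\Psi(X,g) = 0$; equivalently $h := \varphi_{X(g)}^* g - \bar g$ satisfies $\delta_{\bar g} h = 0$. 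The norm estimate $\|h\|_{W^{2,p}} \le N \|g-\bar g\|_{W^{2,p}}$ then follows from the Lipschitz bound on the solution operator supplied by the quantitative form of the inverse function theorem, with $N$ controlled by the operator norm of the inverse of the fixed linearization $\delta_{\bar g} L_{\bar g}$, hence depending only on $(M,\bar g)$.

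The principal obstacle is functional-analytic rather than geometric: one must realize the diffeomorphism action $(X,g) \mapsto \varphi_X^* g$ as a continuously differentiable map between the correct Sobolev completions without losing derivatives under composition. This is precisely where $p>n$ is used, so that $W^{2,p} \hookrightarrow C^{1}$, ensuring the $\varphi_X$ are genuine diffeomorphisms and that nearby symmetric $2$-tensors remain positive definite, and it is where the manifold-of-maps machinery underlying Ebin's theorem is needed to control the composition. Once this framework is set up, the elliptic self-adjointness of $\delta_{\bar g} L_{\bar g}$ together with the automatic orthogonality to Killing fields make the inverse function theorem apply cleanly and yield the stated diffeomorphism, the gauge condition, and the estimate.
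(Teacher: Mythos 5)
Your proposal is correct and follows essentially the same route as the paper: the paper gives no proof of this statement, remarking only that it is Ebin's Slice Theorem obtained by "combining Corollary \ref{Canonical_decomposition_S_2} and Theorem \ref{Generalized_Inverse_Function_Theorem}" and deferring details to \cite{Ebin, F-M}. Your gauge-map argument (solving $\delta_{\bar g}(\varphi_X^*g-\bar g)=0$ via the generalized inverse function theorem, with the Killing fields correctly disposed of by the automatic $L^2$-orthogonality of $\delta_{\bar g}$-images) is precisely that combination carried out in detail, and your deferral of the composition-smoothness technicality to Ebin's manifold-of-maps framework is at the same level of rigor as the paper itself.
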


\begin{remark}
Brendle and Marques (c.f. \cite{B-M}) proved an analogous decomposition and slice theorem for a compact domain with boundary.
\end{remark}

Now we can give the proof of the main theorem (Theorem \ref{Q_stability}) in this section.
\begin{theorem}
Let $(M,\bar{g})$ be a closed Riemannian manifold. Assume
$(M,\bar{g})$ is not $Q$-singular, then the Q-curvature is linearized stable at $\bar g$ in the sense that $Q : \mathcal{M} \rightarrow C^{\infty}(M) $ is a submersion at $\bar{g}$.
Thus, there is a neighborhood $U \subset C^{\infty}(M)$ of $Q_{\bar{g}}$ such that for any $\psi \in U$, there exists a metric $g$ on $M$ closed to $\bar{g}$ with $Q_g = \psi$.
\end{theorem}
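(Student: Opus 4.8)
The plan is to show that the linearization $\Gamma_{\bar g} : S_2(M) \to C^\infty(M)$ is surjective, since surjectivity of the derivative is exactly what it means for $Q$ to be a submersion at $\bar g$; the prescribing conclusion then follows immediately from the Generalized Inverse Function Theorem (Theorem \ref{Generalized_Inverse_Function_Theorem}), applied with $X = W^{s,p}$-metrics and $Y$ the appropriate Sobolev space of functions, together with elliptic regularity to upgrade the solution to a smooth metric. The heart of the matter is therefore the purely linear-algebraic/PDE statement that $\Gamma_{\bar g}$ has dense (in fact full) image.

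First I would invoke the Splitting Theorem (Theorem \ref{Splitting_Theorem}) applied to the fourth-order operator $\Gamma_{\bar g}^*$ acting on functions (here the roles are: $D = \Gamma_{\bar g}$ maps sections of $S_2(M)$ to functions, and $D^* = \Gamma_{\bar g}^*$ maps functions to $S_2(M)$). The decomposition reads
\begin{align*}
C^\infty(M) = \Ima \Gamma_{\bar g} \oplus \ker \Gamma_{\bar g}^*,
\end{align*}
provided one of $\Gamma_{\bar g}$ or $\Gamma_{\bar g}^*$ has injective principal symbol. By hypothesis $(M,\bar g)$ is not $Q$-singular, which means precisely $\ker \Gamma_{\bar g}^* = \{0\}$, so the decomposition collapses to $C^\infty(M) = \Ima \Gamma_{\bar g}$, giving surjectivity directly.

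The step I expect to be the main obstacle is verifying the symbol hypothesis needed to apply the Splitting Theorem, namely that $\Gamma_{\bar g}^*$ (equivalently $\Gamma_{\bar g}$) has injective principal symbol. From the explicit formula in Proposition \ref{prop:Gamma^*}, the leading (fourth-order) terms of $\Gamma_{\bar g}^* f$ are $A_n(-g\,\Delta^2 f + \nabla^2 \Delta f)$ together with the trace-type contributions; one must compute the principal symbol $\sigma_\xi(\Gamma_{\bar g}^*)$ at a nonzero covector $\xi$ and check that it is injective as a map $\mathbb{R} \to S_2$ (the symbol of an operator into $S_2(M)$ from functions). Concretely, freezing coefficients, the symbol sends $f \mapsto A_n\bigl(-|\xi|^2 g + \xi \otimes \xi\bigr)|\xi|^2 f$ up to lower-weight corrections, and since $-|\xi|^2 g + \xi\otimes\xi$ is a nonzero tensor for $\xi \neq 0$, the symbol is injective; this is where one uses that $A_n \neq 0$ and that the top-order part does not degenerate. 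I would isolate this symbol computation as a short lemma, since it is the one genuinely nonformal ingredient.

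Finally I would assemble the analytic conclusion. Having surjectivity of $\Gamma_{\bar g}$ on the smooth level, I extend $Q$ to a map between suitable Banach manifolds of $W^{s,p}$ metrics and $W^{s-4,p}$ functions (with $p$ large so the spaces embed in $C^2$), verify that $Q$ is $C^1$ there with derivative $\Gamma_{\bar g}$, and apply Theorem \ref{Generalized_Inverse_Function_Theorem} to produce, for each $\psi$ near $Q_{\bar g}$, a nearby metric $g$ with $Q_g = \psi$. Elliptic regularity for the (now invertible, modulo kernel) system, or a standard bootstrap using that $\psi$ is smooth and the operator is elliptic in the appropriate sense, then guarantees $g$ is smooth, completing the proof.
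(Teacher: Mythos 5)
Your proposal is correct and follows essentially the same route as the paper: compute the principal symbol of $\Gamma_{\bar g}^*$ (which is $-A_n\left(g|\xi|^2 - \xi\otimes\xi\right)|\xi|^2$, nonzero for $\xi \neq 0$, hence injective), apply the Splitting Theorem to get $C^\infty(M) = \Ima \Gamma_{\bar g} \oplus \ker \Gamma_{\bar g}^*$, use the non-$Q$-singular hypothesis to collapse this to surjectivity of $\Gamma_{\bar g}$, and conclude via the Generalized Inverse Function Theorem. The only cosmetic differences are that the paper verifies nonvanishing of the symbol by taking its trace rather than by inspection, and it leaves the Sobolev-space/regularity bookkeeping implicit where you spell it out.
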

\begin{proof}

The principal symbol of $\Gamma_{\bar{g}}^*$ is
\begin{align*}
\sigma_{\xi}(\Gamma_{\bar{g}}^*) = - A_n \left( g |\xi|^2 - \xi \otimes\xi \right)|\xi|^2.
\end{align*}

Taking trace, we get
\begin{align*}
tr\ \sigma_{\xi}(\Gamma_{\bar{g}}^*) = - A_n \left( n - 1 \right)|\xi|^4.
\end{align*}

Thus, $\sigma_{\xi}(\Gamma_{\bar{g}}^*) = 0$ will imply that $\xi = 0$. \emph{i.e.} $\Gamma^*_{\bar{g}}$ has an injective principal symbol.

By the \emph{Splitting Theorem} \ref{Splitting_Theorem},
$$C^\infty(M) = \Ima \Gamma_{\bar{g}} \oplus \ker \Gamma^*_{\bar{g}},$$
which implies that $\Gamma_{\bar{g}}$ is surjective, since we assume that $(M,\bar{g})$ is not $Q$-singular, \emph{i.e.} $\ker \Gamma^*_{\bar{g}} = \{0\}$.

Therefore, applying the \emph{Generalized Implicit Function Theorem} (Theorem \ref{Generalized_Inverse_Function_Theorem}), $Q$ maps a neighborhood of $\bar g$ to a neighborhood of $Q_{\bar{g}}$ in $C^\infty(M)$.

\end{proof}

As a consequence, we can derive the stability of generic positive Einstein manifolds (Corollary \ref{cor:stab_pos_Einstein}).

\begin{corollary}
Let $(M,\bar{g})$ be a closed positive Einstein manifold. Assume $(M,\bar{g})$ is not spherical, then the Q-curvature is linearized stable at $\bar g$.
\end{corollary}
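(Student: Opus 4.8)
The plan is to reduce the corollary entirely to Theorem \ref{Q_stability} by showing that a non-spherical positive Einstein manifold cannot be $Q$-singular. Since Theorem \ref{Q_stability} already guarantees linearized stability for any closed manifold that is not $Q$-singular, the only thing I need to verify is the non-$Q$-singularity of $(M,\bar g)$ under the stated hypotheses.

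First I would invoke Proposition \ref{Q-static_sphere}, which asserts that any complete $Q$-singular Einstein manifold with positive scalar curvature is isometric to a round sphere $(S^n(r), g_{S^n(r)})$ with $r = \left(\frac{n(n-1)}{R_{\bar g}}\right)^{1/2}$. I would then argue by contraposition: the manifold $(M,\bar g)$ is positive Einstein, so $R_{\bar g} > 0$, and by hypothesis it is not spherical. If $(M,\bar g)$ were $Q$-singular, Proposition \ref{Q-static_sphere} would force it to be isometric to a round sphere, contradicting the non-spherical assumption. Hence $\ker \Gamma_{\bar g}^* = \{0\}$, that is, $(M,\bar g)$ is not $Q$-singular.

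With non-$Q$-singularity established, I would apply Theorem \ref{Q_stability} directly: it yields that $Q : \mathcal{M} \rightarrow C^\infty(M)$ is a submersion at $\bar g$, which is precisely the assertion that the $Q$-curvature is linearized stable at $\bar g$. This completes the argument.

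There is no substantive obstacle here, as the corollary is a formal consequence of two already-proven results; the entire analytic content, namely the Obata-type rigidity used to pin down the sphere and the Splitting/Inverse Function Theorem machinery behind Theorem \ref{Q_stability}, has been carried out upstream. The only point requiring minor care is to confirm that the positivity hypothesis $R_{\bar g} > 0$ genuinely places us in the regime covered by Proposition \ref{Q-static_sphere} (positive, not merely nonnegative, scalar curvature), so that the spherical conclusion and the resulting contradiction are valid.
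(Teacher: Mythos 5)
Your proposal is correct and follows essentially the same route as the paper: the paper likewise argues by contraposition, citing Theorem \ref{Classificaition_Q_singular_Einstein} (of which Proposition \ref{Q-static_sphere} is the positive-scalar-curvature ingredient you invoke) to rule out $Q$-singularity, and then applies Theorem \ref{Q_stability}. Citing the proposition directly rather than the classification theorem is an immaterial difference, and your attention to the $R_{\bar g}>0$ hypothesis is exactly what makes the reduction valid.
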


\begin{proof}

Since $M$ is assumed to be not spherical, by Theorem \ref{Classificaition_Q_singular_Einstein}, $(M,g)$ is not $Q$-singular. Now the stability follows from Theorem \ref{Q_stability}. \\
\end{proof}

For a generic $Q$-flat manifold, we can prescribe any smooth function such that it is the $Q$-curvature for some metric on the manifold (Corollary \ref{cor:prescribing_zero_Q}).

\begin{corollary}
Suppose $(M, \bar g)$ is a closed non-$Q$-singular manifold with vanishing $Q$-curvature. Then any smooth function $\varphi$ can be realized as a $Q$-curvature for some metric $ g$ on $M$.
\end{corollary}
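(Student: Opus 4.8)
The plan is to combine the local surjectivity furnished by Theorem \ref{Q_stability} with the elementary homogeneity of $Q$-curvature under constant rescalings of the metric. Because the background is $Q$-flat, i.e. $Q_{\bar g} = 0$, Theorem \ref{Q_stability} supplies a neighborhood $U \subset C^\infty(M)$ of the \emph{zero} function such that every $\psi \in U$ is realized as $Q_g$ for some metric $g$ near $\bar g$. The entire content of the corollary is then to bridge the gap between such ``small'' prescribed functions and an arbitrary $\varphi$, and the bridge is scaling.

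First I would record the scaling law for $g \mapsto \lambda g$ with a constant $\lambda > 0$. Since the Levi-Civita connection, the Christoffel symbols, the full Riemann tensor and the $(0,2)$ Ricci tensor are all unchanged by a constant rescaling, while $R_{\lambda g} = \lambda^{-1} R_g$, $|Ric_{\lambda g}|^2_{\lambda g} = \lambda^{-2}|Ric_g|^2_g$ and $\Delta_{\lambda g} = \lambda^{-1}\Delta_g$, substituting into the defining formula $Q_g = A_n \Delta_g R_g + B_n |Ric_g|^2_g + C_n R_g^2$ yields the clean identity $Q_{\lambda g} = \lambda^{-2} Q_g$. This is the only computation required and it is routine.

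Next, given an arbitrary $\varphi \in C^\infty(M)$, I would exploit that $U$ is an open neighborhood of $0$: the family $\{\varepsilon\varphi\}_{\varepsilon > 0}$ tends to $0$ as $\varepsilon \to 0^+$ in the topology underlying $U$, so for $\varepsilon$ small enough $\varepsilon\varphi \in U$. Theorem \ref{Q_stability} then produces a metric $g_0$ with $Q_{g_0} = \varepsilon\varphi$. Finally I set $g := \sqrt{\varepsilon}\, g_0$; by the scaling law, $Q_g = (\sqrt{\varepsilon})^{-2} Q_{g_0} = \varepsilon^{-1}(\varepsilon\varphi) = \varphi$, which is exactly what we want.

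There is essentially no hard step here: the substance is inherited wholesale from Theorem \ref{Q_stability}, and the remaining ingredients are the scaling identity together with the observation that the realizable neighborhood is centered at $0$ \emph{precisely} because $\bar g$ is $Q$-flat (this is where the hypothesis $Q_{\bar g} = 0$ is used, as opposed to the merely local statement of Theorem \ref{Q_stability}). The only point deserving a word of care is checking that $\varepsilon\varphi$ genuinely lands in $U$; since $U$ is open and $\varepsilon\varphi \to 0$ in $C^\infty(M)$, hence in whatever Banach completion the inverse function theorem was applied on, this is immediate.
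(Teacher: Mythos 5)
Your proposal is correct and follows essentially the same route as the paper's own proof: invoke Theorem \ref{Q_stability} at the $Q$-flat background to realize all sufficiently small functions, then use the constant-rescaling law $Q_{\lambda g} = \lambda^{-2} Q_g$ to pass from $\varepsilon\varphi$ (realized by some $g_0$ near $\bar g$) to $\varphi$ itself via $g = \sqrt{\varepsilon}\, g_0$. The paper's argument is identical, with $\varepsilon = \varepsilon_0 / (2\|\varphi\|_{C^\infty(M)})$ made explicit.
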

\begin{proof}
Since $(M, \bar g)$ is non-$Q$-singular, applying Theorem \ref{Q_stability}, as a nonlinear map, $Q$ maps a neighborhood of the metric $\bar g$ to a neighborhood of $Q_{\bar g} = 0$ in $C^\infty(M)$. Thus there exists an $\varepsilon_0 > 0$ such that for any smooth function $\psi$ with $||\psi||_{C^\infty(M)}< \varepsilon_0$, we can find a smooth metric $g_\psi$ closed to $\bar g$ with $Q_{g_\psi} = \psi$.

Now for any nontrivial $\varphi \in C^\infty(M)$, let $\tilde \varphi:= u_{\varepsilon_0 , \varphi}\varphi$, where $u_{\varepsilon_0 , \varphi} = \frac{\varepsilon_0 }{2 ||\varphi||_{_{C^\infty(M)}}} > 0$. Clearly, $||\tilde \varphi||_{C^\infty(M)} < \varepsilon_0$ and hence there is a metric $g_{\tilde \varphi}$ closed to $\bar g$ such that $Q_{g_{\tilde \varphi}} = \tilde \varphi$. Let 
$$g = u_{\varepsilon_0 , \varphi}^{\frac{1}{2}} g_{\tilde \varphi},$$ then we have $$Q_g = u_{\varepsilon_0 , \varphi}^{-1} \cdot Q_{g_{\tilde \varphi}} =  \varphi.$$
\end{proof}

Now we can prove the Corollary \ref{cor:pos_Y_negative_Q}.
\begin{corollary}
Let $M$ be a closed manifold with positive Yamabe invariant $Y(M) > 0$. There is a metric $g$ with $Q$-curvature $Q_g < 0$ on $M$. 
\end{corollary}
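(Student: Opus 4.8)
The plan is to realize the negativity of $Q$ through a conformal deformation, using the positivity of the Yamabe invariant only to fix a convenient background. First I would invoke $Y(M)>0$ to produce a metric $g_0$ with constant positive scalar curvature $R_0$ (a Yamabe minimizer in a positive conformal class). For such a metric $\Delta_{g_0}R_0=0$, so that $\int_M Q_{g_0}\,dv_{g_0}=\int_M\big(B_n|Ric_{g_0}|^2+C_nR_0^2\big)\,dv_{g_0}$. Decomposing $|Ric_{g_0}|^2=|\overset{\circ}{Ric}_{g_0}|^2+R_0^2/n$ and using $B_n<0$ together with the identity $\frac{B_n}{n}+C_n=\frac{(n+2)(n-2)}{8n(n-1)^2}>0$ established in the proof of Theorem \ref{Static_Ricci_flat}, one finds $\int_M Q_{g_0}\,dv_{g_0}<0$ as soon as $\int_M|\overset{\circ}{Ric}_{g_0}|^2\,dv_{g_0}$ is large compared with $R_0^2\,Vol_{g_0}(M)$. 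Thus I would first deform $g_0$ (non-conformally, then re-normalize the scalar curvature to a constant within the new conformal class, which stays positive since $Y(M)>0$ is an open condition) so as to inflate the traceless Ricci curvature and guarantee $\int_M Q_{g_0}\,dv_{g_0}<0$.

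The cleanest case is $n=4$, where the conformal change $g=e^{2\phi}g_0$ obeys the linear law $e^{4\phi}Q_g=Q_{g_0}+P_{g_0}\phi$ and constants lie in $\ker P_{g_0}$. Assuming (generically, after a further small perturbation) that $\ker P_{g_0}$ consists exactly of constants, $P_{g_0}$ restricts to an isomorphism of the space of zero-average functions. Setting $m:=-\frac{1}{Vol_{g_0}(M)}\int_M Q_{g_0}\,dv_{g_0}>0$, the function $\psi:=-Q_{g_0}-m$ has zero average, so I can solve $P_{g_0}\phi=\psi$. The resulting metric satisfies $e^{4\phi}Q_g=Q_{g_0}+\psi=-m<0$, hence $Q_g<0$ everywhere on $M$.

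For general $n\neq4$ I would run the same strategy through the nonlinear conformal law $P_{g_0}u=\frac{n-4}{2}Q_g\,u^{\frac{n+4}{n-4}}$ with $u=e^{\frac{n-4}{2}\phi}>0$, so that $\operatorname{sign}(Q_g)=\operatorname{sign}\big((n-4)P_{g_0}u\big)$; the goal becomes the construction of a positive $u$ for which $(n-4)P_{g_0}u<0$ everywhere. The main obstacle is exactly this fourth-order solvability: since $P_{g_0}$ admits no maximum principle, producing a positive solution with a definite pointwise sign of $P_{g_0}u$ is delicate, and one must also rule out or perturb away non-constant elements of $\ker P_{g_0}$. I expect this step --- rather than the (soft) production of a background with $\int_M Q_{g_0}\,dv_{g_0}<0$ --- to be the crux. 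An alternative finish, matching the phrase ``direct corollary of Theorem \ref{Q_stability}'', is to take $g_0$ non-$Q$-singular with $Q_{g_0}$ made $C^0$-small (for instance by homothetic scaling, under which $Q$ scales by $\lambda^{-4}$) and then use the local surjectivity of $Q$ to realize a strictly negative function near $Q_{g_0}$; here the technical point is the uniformity of the surjectivity neighborhood under scaling.
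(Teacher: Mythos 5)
Your proposal does not close the argument, and the key missing idea is precisely the one the paper uses: a background metric whose $Q$-curvature is \emph{pointwise} nonpositive. The paper's proof is short and soft. By Matsuo's theorem (Corollary 2 in \cite{Mat14}), $Y(M)>0$ yields a metric $g$ with $R_g\equiv 0$ but $Ric_g\not\equiv 0$; then the defining formula collapses to $Q_g=B_n|Ric_g|^2=-\frac{2}{(n-2)^2}|Ric_g|^2\le 0$. If $Q_g<0$ everywhere we are done; otherwise $Q_g$ vanishes somewhere but not identically, so it is non-constant, hence by Theorem \ref{thm:Q_const} the metric is not $Q$-singular, and Theorem \ref{Q_stability} allows one to realize $Q_g-\epsilon<0$ for small $\epsilon$, since $Q_g-\epsilon$ lies in the surjectivity neighborhood and is strictly negative exactly because $Q_g\le 0$ pointwise. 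None of your three routes supplies this pointwise sign condition.

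Concretely: (i) your $n\ne 4$ case is openly left unresolved, and it is not a technicality --- producing a positive solution $u$ of $P_{g_0}u=\frac{n-4}{2}Q_g u^{\frac{n+4}{n-4}}$ with prescribed negative $Q_g$, absent a maximum principle, is a hard open-ended problem, so the proposal does not prove the statement for general $n$; (ii) even for $n=4$, the hypotheses that $\ker P_{g_0}$ consists exactly of constants and that one can deform to $\int_M Q_{g_0}\,dv_{g_0}<0$ (while staying in a positive conformal class) are asserted ``generically'' but not established, and a negative \emph{average} of $Q$ only gives negativity somewhere, not everywhere, unless the kernel/solvability step goes through; (iii) your fallback via Theorem \ref{Q_stability} fails for a structural reason: the whole picture is scale-equivariant. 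Under $g\mapsto\lambda^2 g$ one has $Q_{\lambda^2 g}=\lambda^{-4}Q_g$, and the set of functions realized as $Q$-curvatures of metrics near $\lambda^2 g_0$ is exactly $\lambda^{-4}$ times the set realized near $g_0$; so if $Q_{g_0}$ is positive at some point, every realizable nearby function is still positive at that point, no matter how small $\sup|Q_{g_0}|$ has been made by scaling. Smallness of $Q$ buys nothing; what is needed is $Q\le 0$ pointwise together with non-$Q$-singularity, and that is exactly what Matsuo's scalar-flat metric provides.
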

\begin{proof}
By Matsuo's theorem (see Corollary 2 in \cite{Mat14}), on a closed manifold $M$ with dimension $n\geq 3$ and positive Yamabe invariant, there exists a metric $g$ with scalar curvature $R_g = 0$ but $Ric_g \not\equiv 0$ on $M$. Thus the $Q$-curvature satisfies $$Q_g = - \frac{2}{(n-2)^2} |Ric_g|^2 \leq 0.$$
If $|Ric_g|>0$ pointwisely, then $Q_g < 0$ on $M$. Otherwise, there is a point $p \in M$ such that $|Ric_g(p)|^2 = 0$ and hence $Q_g$ is not a constant on $M$. This implies that the metric $g$ is not $Q$-singular. Therefore, by Theorem \ref{Q_stability}, we can perturb the metric $g$ to obtain a metric with strictly negative $Q$-curvature. This gives the conclusion.
\end{proof}

For the Ricci flat case, we have a better result since we can identify $\ker \Gamma_{\bar{g}}^*$ with constants.

\begin{theorem}
Let $(M,\bar{g})$ be a closed Ricci flat Manifold. Denote $$\Phi:=\{\psi \in C^{\infty}(M): \int_M \psi dv_{\bar{g}} = 0\}$$ to be the set of smooth functions with zero average. Then for any $\psi \in \Phi$, there exists a metric $g$ on $M$ such that $$Q_g = \psi.$$
\end{theorem}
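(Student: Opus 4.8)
The plan is to run the Splitting-Theorem/Inverse-Function-Theorem scheme of Theorem~\ref{Q_stability} and Corollary~\ref{cor:prescribing_zero_Q}, while keeping careful track of the one-dimensional obstruction created by the constant functions. First I would record the two facts that make the Ricci flat case manageable: since $(M,\bar g)$ is Ricci flat, Theorem~\ref{Static_Ricci_flat} gives $Q_{\bar g}=0$ and shows $\ker\Gamma_{\bar g}^{*}=\mathbb{R}\cdot 1$, so the cokernel of the linearization is exactly one dimensional. Exactly as in the proof of Theorem~\ref{Q_stability}, the principal symbol of $\Gamma_{\bar g}^{*}$ is injective, so the Splitting Theorem~\ref{Splitting_Theorem} yields the $L^{2}(dv_{\bar g})$-orthogonal decomposition $C^{\infty}(M)=\Ima\Gamma_{\bar g}\oplus\ker\Gamma_{\bar g}^{*}$. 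Since $\ker\Gamma_{\bar g}^{*}$ is spanned by constants, its orthogonal complement is precisely $\Phi$, and hence $\Ima\Gamma_{\bar g}=\Phi$; that is, $\Gamma_{\bar g}\colon S_{2}(M)\to\Phi$ is surjective.

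Next I would promote this to local surjectivity onto $\Phi$. Writing $\Pi\colon C^{\infty}(M)\to\Phi$ for the projection $\Pi\varphi=\varphi-\frac{1}{\mathrm{Vol}_{\bar g}(M)}\int_{M}\varphi\,dv_{\bar g}$, consider $\tilde Q:=\Pi\circ Q\colon\mathcal{M}\to\Phi$ on a suitable Sobolev completion. Its differential at $\bar g$ is $\Pi\circ\Gamma_{\bar g}=\Gamma_{\bar g}$, surjective onto $\Phi$ by the previous step, so the Generalized Inverse Function Theorem~\ref{Generalized_Inverse_Function_Theorem} realizes every sufficiently small $\psi\in\Phi$ up to an additive constant: there is $g$ near $\bar g$ with $Q_{g}=\psi+c(g)$ and $c(g)=\frac{1}{\mathrm{Vol}_{\bar g}(M)}\int_{M}Q_{g}\,dv_{\bar g}$. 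To pass from small $\psi$ to arbitrary $\psi\in\Phi$ I would invoke the scaling identity $Q_{\lambda g}=\lambda^{-2}Q_{g}$ for constants $\lambda>0$, applying the reduced solvability to $t\psi$ with $t$ small and then rescaling, exactly in the spirit of Corollary~\ref{cor:prescribing_zero_Q}.

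The crux --- and the step I expect to be the main obstacle --- is the removal of the additive constant $c(g)$, i.e.\ solving the scalar equation $\int_{M}Q_{g}\,dv_{\bar g}=0$ that lives in the one-dimensional cokernel. Because $1\in\ker\Gamma_{\bar g}^{*}$, the first variation of $g\mapsto\int_{M}Q_{g}\,dv_{\bar g}$ at $\bar g$ vanishes identically, so this constant cannot be adjusted to first order; moreover a direct computation of the Hessian shows it is negative along transverse-traceless and conformal directions (this negativity is precisely the mechanism behind the local rigidity Theorem~\ref{flat_local_rigidity}). Consequently the constant cannot be killed by any deformation confined to a neighborhood of $\bar g$, and the realization must genuinely leave that neighborhood --- which is allowed here, since the statement asks only for \emph{some} metric $g$, not one close to $\bar g$. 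My resolution would be to base the construction not at $\bar g$ but at a nearby non-$Q$-singular metric $g_{*}$, for which Theorem~\ref{Q_stability} makes $Q$ a genuine submersion onto a full $C^{\infty}$-neighborhood of $Q_{g_{*}}$; choosing $g_{*}$ close enough that $\|Q_{g_{*}}\|$ is small and then solving $\lambda^{-2}\eta=\psi$ with $\eta$ in that neighborhood and $\lambda$ small reduces the theorem to a single quantitative estimate, namely that the submersion radius at $g_{*}$ does not decay faster than $\|Q_{g_{*}}\|$ as $g_{*}\to\bar g$. Establishing this balance, in the regime where $g_{*}$ approaches the $Q$-singular metric $\bar g$ and the estimates degenerate, is where the real work lies.
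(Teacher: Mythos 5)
Your first two paragraphs track the paper's own proof almost exactly: injectivity of the principal symbol of $\Gamma_{\bar g}^*$, the splitting $C^\infty(M)=\Ima\Gamma_{\bar g}\oplus\ker\Gamma_{\bar g}^*$ from Theorem \ref{Splitting_Theorem}, the identification $\ker\Gamma_{\bar g}^*=\mathbb{R}\cdot 1$ via Theorem \ref{Static_Ricci_flat} (hence $\Ima\Gamma_{\bar g}=\Phi$), the Generalized Inverse Function Theorem, and the rescaling $g=r^{-2}g_r$, $Q_g=r^4 Q_{g_r}$ to pass from small to arbitrary $\psi\in\Phi$. You are also right to flag the delicate point: applied to your projected map $\Pi\circ Q$, Theorem \ref{Generalized_Inverse_Function_Theorem} by itself only yields $Q_g=\psi+c(g)$ with an uncontrolled constant, whereas the paper asserts exact local realization, $V_0|_\Phi\subset Q(U_{\bar g})$, by viewing $Q$ as a submersion ``with respect to $\Phi$,'' and passes over this issue in a single sentence.

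From that point on, however, your proposal both overclaims and underdelivers, and it does not prove the theorem. First, your argument that the constant \emph{cannot} be killed by deformations near $\bar g$ is not valid: the negativity of the second variation (Lemma \ref{2nd_variation_flat}) is established in the paper only for \emph{flat} $\bar g$ --- its derivation uses $\Delta_L=\Delta$, i.e.\ $Rm=0$, not merely $Ric=0$ --- so it says nothing about a general Ricci flat $\bar g$ (e.g.\ a Calabi--Yau metric), which is the setting of this theorem; and even in the flat case, nonpositivity of $D^2\mathscr{F}_{\bar g}$ on divergence-free directions only obstructs making $\int_M Q_g\,dv_{\bar g}$ \emph{positive} to second order, it does not show that the level set $c(g)=0$ contains no solutions of $Q_g=\psi$ (it visibly contains $\bar g$ and all nearby flat metrics), particularly once the diffeomorphism freedom in Theorem \ref{Slice_Theorem} is taken into account. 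Second, and decisively, your replacement argument is incomplete by your own admission: basing the construction at a nearby non-$Q$-singular metric $g_*$ requires precisely the quantitative statement you leave unproved, namely that the size of the neighborhood produced by Theorem \ref{Q_stability} at $g_*$ does not collapse faster than $\|Q_{g_*}\|$ as $g_*\to\bar g$. Since the right inverse of $\Gamma_{g_*}$ necessarily degenerates as $g_*$ approaches the $Q$-singular metric $\bar g$, there is no a priori reason such a balance holds, and nothing in the paper supplies it. A proof that ends by naming the unproved estimate as ``where the real work lies'' is a reduction, not a proof; the paper's own argument never leaves $\bar g$ and needs no such estimate.
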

\begin{proof}
In the proof of Theorem \ref{Q_stability}, we have showed that the principal symbol of $\Gamma_{\bar{g}}^*$ is injective and the decomposition
$$C^\infty(M) = \Ima \Gamma_{\bar{g}} \oplus \ker \Gamma^*_{\bar{g}}$$
holds.

On the other hand, by Theorem \ref{Static_Ricci_flat}, $\ker \Gamma^*_{\bar{g}}$ is consisted by constant functions and hence $ \Ima \Gamma_{\bar{g}} = \Phi$. By identifying $\Phi$ with its tangent space, we can see the map $Q$ is a submersion at $g$ with respect to $\Phi$. Therefore, by \emph{Generalized Inverse Function Theorem} (Theorem \ref{Generalized_Inverse_Function_Theorem}), we have the local surjectivity. That is, there exists a neighborhood of $\bar{g}$, say $U_{\bar{g}} \subset \mathcal{M}$ and a neighborhood of $0$, say $V_0 \subset C^\infty(M)$ such that $ V_0|_\Phi \subset Q_{g}(U_{\bar{g}})$, where $g\mapsto Q_{g}$ is a map.\\

Now for any $\psi \in \Phi$, let $r > 0$ be a sufficiently large constant such that $\frac{1}{r^4}\psi \in V_0$. There exists a metric $g_r \in U_{\bar{g}}$, such that $Q_{g_r} = \frac{1}{r^4} \psi$. Let $g := \frac{1}{r^2}g_r$, we have $$Q_g = r^4 \cdot Q_{g_r} = r^4 \cdot \frac{1}{r^4}\psi = \psi.$$
Thus we prove Theorem \ref{Ricci_flat_stability}.
\end{proof}

Similarly, we can also give an answer to the prescribing $Q$-curvature problem near the standard spherical metric by noticing the fact that 
$$\ker \Gamma_{\bar{g}}^* = E_{\lambda_1}$$ from Theorem \ref{Q-static_sphere}:

\begin{theorem}\label{thm:stability_sphere}
Let $(S^n, \bar{g})$ be the standard unit sphere and $E_{\lambda_1}$ be the eigenspace of $(-\Delta_g)$ associated to the first nonzero eigenvalue $\lambda_1 = n$. Then for any $\psi \in E_{\lambda_1}^\perp$ with $||\psi - Q_{\bar{g}}||_{C^\infty(S^n,\bar{g})}$ sufficiently small, there exists a metric $g$ near $\bar{g}$ such that $$Q_g = \psi.$$
\end{theorem}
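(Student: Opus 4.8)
The plan is to adapt the strategy of Theorem \ref{Ricci_flat_stability}, the new feature being that the cokernel of the linearization is now the $(n+1)$-dimensional eigenspace $E_{\lambda_1}$ of \emph{nonconstant} functions rather than the constants. By Theorem \ref{Q-static_sphere}, $(S^n,\bar g)$ is $Q$-singular with $\ker\Gamma_{\bar g}^* = E_{\lambda_1}$, and since $Q_{\bar g}$ is a positive constant it is $L^2$-orthogonal to $E_{\lambda_1}$, i.e.\ $Q_{\bar g}\in E_{\lambda_1}^\perp$. Exactly as in Theorem \ref{Q_stability}, $\Gamma_{\bar g}^*$ has injective principal symbol, so the Splitting Theorem \ref{Splitting_Theorem} gives $C^\infty(S^n) = \Ima\Gamma_{\bar g}\oplus E_{\lambda_1}$ and hence $\Ima\Gamma_{\bar g} = E_{\lambda_1}^\perp$. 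Writing $\pi^\perp$ and $\pi^\parallel$ for the $L^2$-projections onto $E_{\lambda_1}^\perp$ and $E_{\lambda_1}$, I would split the prescription equation $Q_g=\psi$ (with $\psi\in E_{\lambda_1}^\perp$) into a regular part $\pi^\perp Q_g=\psi$ and a singular part $\pi^\parallel Q_g=0$, and solve them in turn.

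For the regular part, the linearization of $\pi^\perp\circ Q$ at $\bar g$ is $\pi^\perp\Gamma_{\bar g}=\Gamma_{\bar g}$, a surjection onto $E_{\lambda_1}^\perp$. Working in a divergence-free slice (Theorem \ref{Slice_Theorem}) and applying the Generalized Inverse Function Theorem \ref{Generalized_Inverse_Function_Theorem} to $\pi^\perp\circ Q:\mathcal{M}\to E_{\lambda_1}^\perp$, I obtain for each $\psi\in E_{\lambda_1}^\perp$ close to $Q_{\bar g}$ a metric $g(\psi)$ near $\bar g$ with $\pi^\perp Q_{g(\psi)}=\psi$, so that $Q_{g(\psi)}=\psi+\beta(\psi)$ with $\beta(\psi)\in E_{\lambda_1}$. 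Since $\Ima\Gamma_{\bar g}\perp E_{\lambda_1}$, the differential of $\psi\mapsto\beta(\psi)$ vanishes at $\psi=Q_{\bar g}$, where $g(\psi)=\bar g$ and $\beta=0$; hence the obstruction is quadratically small, $\beta(\psi)=O(\|\psi-Q_{\bar g}\|^2)$.

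The remaining task is to kill $\beta$, for which I would use the conformal symmetry of $S^n$. Its M\"obius group has an $(n+1)$-parameter family of nonisometric elements $\{\varphi_a\}$, matching $\dim E_{\lambda_1}=n+1$, and $Q$ is natural: $Q_{\varphi_a^* g}=Q_g\circ\varphi_a$. Seeking $g=\varphi_a^*(\bar g+h)$ turns $Q_g=\psi$ into $Q_{\bar g+h}=\psi\circ\varphi_a^{-1}$; solving the regular part gives $h=h(a)$ and reduces the problem to the finite-dimensional equation $\beta(a)=\pi^\parallel(\psi\circ\varphi_a^{-1})$ for $a\in\mathbb{R}^{n+1}$, where $\beta(a)=\pi^\parallel Q_{\bar g+h(a)}=O(\|\psi-Q_{\bar g}\|^2)$. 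Since the infinitesimal M\"obius generators are the gradients $\nabla x_i$ of the first eigenfunctions, one computes $\psi\circ\varphi_a^{-1}-\psi\approx-\sum_i a_i\langle\nabla x_i,\nabla(\psi-Q_{\bar g})\rangle$, so the right-hand side is, to leading order, a linear map of $a$ into $E_{\lambda_1}$. I would then solve $\beta(a)=\pi^\parallel(\psi\circ\varphi_a^{-1})$ by a degree-theoretic argument, balancing the quadratically small obstruction against this conformal variation and using equivariance under the isometry group $O(n+1)$.

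The main obstacle is precisely this reduced finite-dimensional equation. The conformal directions lie in $\ker\Gamma_{\bar g}$, so they do not move $Q$ at first order in the metric, and the leading linear-in-$a$ term $a\mapsto\sum_i a_i\,\pi^\parallel\langle\nabla x_i,\nabla(\psi-Q_{\bar g})\rangle$ can degenerate for special $\psi$ (its matrix pairs $\psi-Q_{\bar g}$ against the second-order products $x_ix_j$ and $\langle\nabla x_i,\nabla x_j\rangle$, which may vanish). Establishing solvability uniformly for all small $\psi\in E_{\lambda_1}^\perp$, equivalently that the induced map $\mathbb{R}^{n+1}\to E_{\lambda_1}$ has nonzero Brouwer degree on a small sphere, is the crux; once this is in hand, the rest is a routine combination of the splitting theorem and the inverse function theorem already developed in the paper.
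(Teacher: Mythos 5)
Your opening moves are exactly the paper's own: the paper in fact gives no separate proof of Theorem \ref{thm:stability_sphere}, deriving it ``similarly'' to Theorem \ref{Ricci_flat_stability} from the injectivity of the principal symbol of $\Gamma_{\bar g}^*$, the splitting $C^\infty(S^n)=\Ima\Gamma_{\bar g}\oplus\ker\Gamma_{\bar g}^*$ of Theorem \ref{Splitting_Theorem}, the identification $\ker\Gamma_{\bar g}^*=E_{\lambda_1}$ of Proposition \ref{Q-static_sphere}, and the Generalized Inverse Function Theorem applied to the projected map $\pi^\perp\circ Q$. This part of your proposal agrees with the paper, and it delivers precisely what the paper's mechanism delivers: for each admissible $\psi$ a metric $g$ near $\bar g$ with $\pi^\perp Q_g=\psi$, i.e.\ $Q_g=\psi+\beta$ with $\beta\in E_{\lambda_1}$ small. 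You then correctly note that the theorem asserts $\beta=0$, and that this extra step does not follow from the inverse function theorem alone.

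The genuine gap is that your device for producing $\beta=0$ --- M\"obius pullbacks plus a Brouwer degree argument for the reduced equation $\beta(a)=\pi^\parallel\left(\psi\circ\varphi_a^{-1}\right)$ in $\mathbb{R}^{n+1}$ --- is never executed, and the degeneracy you yourself flag is fatal to it as designed. Writing $u=\psi-Q_{\bar g}$ and $\epsilon=\|u\|$, the obstruction side is $O(\epsilon^2)$, while the linear-in-$a$ part of the conformal side has matrix
\begin{equation*}
M_{ij}=\int_{S^n}\langle\nabla x_i,\nabla u\rangle\,x_j\,dv_{\bar g}
=\int_{S^n}u\,\bigl((n+1)\,x_i x_j-\delta_{ij}\bigr)\,dv_{\bar g},
\end{equation*}
which sees only the degree-two spherical-harmonic component of $u$. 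For the infinite-dimensional set of admissible data with no such component (e.g.\ $u$ a spherical harmonic of degree at least $3$), $M\equiv 0$, the conformal side is $O(|a|^2\epsilon)$, and the only scale on which it could balance the obstruction is $|a|\sim\epsilon^{1/2}$, where both sides are $O(\epsilon^2)$ and neither dominates; no degree count on any sphere $|a|=\rho$ is then available, and $O(n+1)$-equivariance cannot supply one since $u$ itself has no symmetry. So the crux is left unproved, and what your argument actually establishes is only the weaker statement that $\psi$ is realized modulo a quadratically small error in $E_{\lambda_1}$. (Two further remarks. First, Gel'man's theorem gives only a continuous right inverse, so even the estimate $\beta(\psi)=O(\epsilon^2)$ needs a smooth right inverse, e.g.\ obtained by restricting $\pi^\perp\circ Q$ to the slice $\bar g+\Ima\Gamma_{\bar g}^*$ on which $\Gamma_{\bar g}$ is an isomorphism onto $E_{\lambda_1}^\perp$. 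Second, in fairness, the paper's one-word proof never addresses the $E_{\lambda_1}$-component either --- the same silent upgrade from $\pi_\Phi Q_g=\psi$ to $Q_g=\psi$ occurs in its proof of Theorem \ref{Ricci_flat_stability} --- so your proposal isolates, but does not resolve, the real difficulty.)
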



\section{Rigidity Phenomena of Flat manifolds}

Let $(M,g)$ be a closed Riemannian manifold. Consider a deformation of $(M,g)$, $g(t)=g+th$, $t \in (-\varepsilon, \varepsilon)$.\\

We have calculated the first variation of $Q$-curvature (see equation (\ref{Q_linearization})). In order to study the local rigidity of $Q$-curvature, we are going to calculate the second variation of $Q$-curvature.\\

First, we recall the following well known $2^{nd}$-variation formulae, which can be found in \cite{F-M}. For detailed calculations, we refer to the appendices of \cite{Yuan}.

\begin{proposition}
We have the following $2^{nd}$-variation formulae for metrics,
\begin{align}
g''_{ij} = \left.\frac{d^2}{dt^2}\right|_{t=0} (g(t))_{ij} = 0,
\end{align}
and
\begin{align}
g''^{ij} = \left.\frac{d^2}{dt^2}\right|_{t=0} (g(t))^{ij} = 2 h_k^j h^{ik}.
\end{align}

Also for Christoffel Symbols,
\begin{align}
{\Gamma''}_{ij}^k = \left.\frac{d^2}{dt^2}\right|_{t=0} \Gamma(g(t))^k_{ij} = - h^{kl} \left( \nabla_i  h_{jl} + \nabla_j h_{il}
- \nabla_l h_{ij} \right).
\end{align}
\end{proposition}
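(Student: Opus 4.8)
The plan is to exploit that the deformation $g(t) = g + th$ is \emph{affine} in $t$, which forces all second derivatives of the covariant metric components to vanish and reduces everything else to differentiating algebraic identities. For the first formula this is immediate: since $(g(t))_{ij} = g_{ij} + t\,h_{ij}$ is linear in $t$, we read off $(g(t))'_{ij} = h_{ij}$ and $(g(t))''_{ij} = 0$.

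For the inverse metric I would differentiate the defining identity $g^{ik}(t)\,g_{kj}(t) = \delta^i_j$ twice in $t$ and evaluate at $t=0$. The Leibniz rule gives $(g^{ik})''\,g_{kj} + 2\,(g^{ik})'\,(g_{kj})' + g^{ik}\,(g_{kj})'' = 0$, and the last term drops out because $(g_{kj})'' = 0$. Substituting the already-recorded relation ${g'}^{ij} = -h^{ij}$ together with $(g_{kj})' = h_{kj}$ yields $(g^{ik})''\,g_{kj} = 2\,h^{ik}h_{kj}$, and raising the remaining free index produces $(g^{ij})'' = 2\,h^{ik}h_k^{j}$, as claimed.

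For the Christoffel symbols I would start from the coordinate formula $\Gamma^k_{ij} = \tfrac{1}{2}\,g^{kl}\big(\partial_i g_{jl} + \partial_j g_{il} - \partial_l g_{ij}\big)$ and differentiate twice in $t$. Writing $A_{ijl}(t) := \partial_i g_{jl} + \partial_j g_{il} - \partial_l g_{ij}$, this quantity is affine in $t$, so $A''_{ijl} = 0$ and only $(\Gamma^k_{ij})'' = \tfrac{1}{2}\big[(g^{kl})''\,A_{ijl} + 2\,(g^{kl})'\,A'_{ijl}\big]$ survives. The key simplification is to evaluate at $t=0$ in \emph{normal coordinates} centered at an arbitrary point: there $A_{ijl} = 0$, which annihilates the term carrying the awkward $(g^{kl})''$, so the second variation of the inverse metric never enters; moreover the ordinary derivatives in $A'_{ijl} = \partial_i h_{jl} + \partial_j h_{il} - \partial_l h_{ij}$ coincide with covariant derivatives. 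Combined with $(g^{kl})' = -h^{kl}$, this gives precisely ${\Gamma''}^k_{ij} = -h^{kl}\big(\nabla_i h_{jl} + \nabla_j h_{il} - \nabla_l h_{ij}\big)$.

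The computation is entirely routine; the only point requiring genuine care is the legitimacy of the normal-coordinate argument. Here one observes that ${\Gamma''}^k_{ij}$ is the second derivative of $\Gamma(t) - \Gamma(0)$, a difference of connections and hence a tensor, so the pointwise identity established in normal coordinates is automatically coordinate-independent and therefore valid everywhere. Beyond careful index bookkeeping I do not expect any real obstacle.
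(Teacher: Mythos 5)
Your proposal is correct. The paper itself gives no proof of this proposition — it simply recalls the formulae as well known, deferring to \cite{F-M} and the appendices of \cite{Yuan} — but your derivation (differentiating the affine family and the identity $g^{ik}(t)g_{kj}(t)=\delta^i_j$, then computing the Christoffel variation in normal coordinates and invoking tensoriality of $\Gamma(t)-\Gamma(0)$ to globalize) is the standard one and matches exactly the normal-coordinate style the paper uses in its own adjacent computations, such as the second variation of $\Delta_{g(t)}R(g(t))$.
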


\begin{lemma}
The second variation of $Q$-curvature is
\begin{align}
\left.\frac{d^2}{dt^2}\right|_{t=0} Q(g(t)) =& A_n [ \Delta_g R'' + 2 \nabla^2 R \cdot ( h \times h ) - 2 h \cdot \nabla^2 R' + ( 2 \delta h + d (tr h) ) \cdot dR' \\ \notag &\ \ \ \ \ \ +  h^{ij} ( 2 \nabla_i h_j^k - \nabla^k h_{ij} ) \nabla_k R - h \cdot ((2 \delta h + d(tr h))  \otimes d R )  ]\\ \notag
& + B_n [ 4 (Ric \times Ric) \cdot ( h \times h ) + 2 | Ric \times h |^2 - 8 Ric' \cdot ( Ric \times h )\\ \notag &\ \ \ \ \ \ + 2 Ric''\cdot Ric + 2 | Ric' |^2  ]\\ \notag
& + C_n [ 2 R R'' + 2(R')^2], \notag
\end{align}
where $Ric'$, $Ric''$; $R'$, $R''$ are the first and second variations of Ricci tensor and scalar curvature at $g$ respectively.
\end{lemma}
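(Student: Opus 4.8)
The plan is to differentiate the defining expression $Q_g = A_n\Delta_g R_g + B_n|Ric_g|_g^2 + C_n R_g^2$ twice with respect to $t$ and evaluate at $t=0$, treating the three summands separately and assembling the results at the end. For each summand the Leibniz rule reduces the computation to the first and second variations of the elementary building blocks $g^{ij}$, $\Gamma^k_{ij}$, $R$ and $Ric$, all of which are already available: the first variations from Proposition \ref{1st_variation_Ricci_scalar} and equation (\ref{laplacian_scalar_variation}), and the second variations $g''^{ij} = 2h^j_k h^{ik}$ and ${\Gamma''}^k_{ij} = -h^{kl}(\nabla_i h_{jl} + \nabla_j h_{il} - \nabla_l h_{ij})$ from the proposition stated just above. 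Since the statement retains $R'$, $R''$, $Ric'$, $Ric''$ as abstract symbols, I do not expand the second variations of the curvatures; the terms in which an inner curvature factor is differentiated twice simply produce $R''$ or $Ric''$ directly.

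The two lower-order terms are straightforward. For $C_n R^2$ one differentiates $\frac{d}{dt}R^2 = 2RR'$ once more to obtain $2RR'' + 2(R')^2$. For $B_n|Ric|^2 = B_n g^{ik}g^{jl}R_{ij}R_{kl}$ I would apply the product rule to the four factors and evaluate using $(g^{ij})' = -h^{ij}$ and $g''^{ij} = 2(h\times h)^{ij}$. The terms carrying $g''^{ij}$ produce $4(Ric\times Ric)\cdot(h\times h)$; the terms in which both inverse-metric factors are differentiated once collect into $2|Ric\times h|^2$; the mixed products of one $(g^{ij})'$ with one $R'_{ij}$ give $-8Ric'\cdot(Ric\times h)$; and the factors differentiated purely in the curvature slots yield $2Ric''\cdot Ric + 2|Ric'|^2$. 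Only the symmetry of $h$ and relabeling of contracted indices are needed here.

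The genuinely involved term is $A_n\Delta_g R$, and this is where I expect the main difficulty. Rather than differentiate the covariant first-variation formula (\ref{laplacian_scalar_variation}) directly, which would force me to track the metric dependence of $\Delta$, $\nabla^2$ and $\delta$ simultaneously, I would work in normal coordinates at a fixed point $p$ and differentiate the coordinate expression $\Delta_{g(t)}R = g^{ij}(\partial_i\partial_j R - \Gamma^k_{ij}\partial_k R)$ twice. At $t=0$ the relation $\Gamma^k_{ij}(p)=0$ annihilates every surviving term that still carries an undifferentiated Christoffel symbol, which prunes the expansion considerably. After substituting $(g^{ij})'$, $g''^{ij}$, $(\Gamma^k_{ij})'$ and ${\Gamma''}^k_{ij}$ and replacing $\partial$ by $\nabla$ at $p$, the six surviving groups should become exactly $\Delta_g R''$, $2\nabla^2 R\cdot(h\times h)$, $-2h\cdot\nabla^2 R'$, $(2\delta h + d(tr h))\cdot dR'$, $h^{ij}(2\nabla_i h_j^k - \nabla^k h_{ij})\nabla_k R$ and $-h\cdot((2\delta h + d(tr h))\otimes dR)$; the contractions $g^{ij}\nabla_i h_{jl} = -(\delta h)_l$ and $g^{ij}\nabla_l h_{ij} = (d(tr h))_l$ are precisely what convert the connection variations into the stated divergence and trace expressions.

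The principal obstacle is organizational rather than conceptual: differentiating a product of four metric- and curvature-dependent factors twice generates a large number of terms, and the error-prone step is to identify which of them vanish at $p$ because of $\Gamma(p)=0$ and to rewrite the remaining coordinate contractions as manifestly tensorial quantities without sign or index mistakes. Choosing normal coordinates at the outset and postponing the substitution of the explicit variation formulae until the product rule has been fully applied keeps the bookkeeping manageable. Once the $A_n$ group is assembled, summing it with the $B_n$ and $C_n$ contributions computed above yields the stated formula.
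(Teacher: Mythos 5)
Your proposal is correct and follows essentially the same route as the paper's proof: the same splitting into the $A_n$, $B_n$, $C_n$ summands, the same normal-coordinate differentiation of $g^{ij}(\partial_i\partial_j R - \Gamma^k_{ij}\partial_k R)$ using $\Gamma(p)=0$ together with the formulae for $g''^{ij}$ and ${\Gamma''}^k_{ij}$, and the same four-factor Leibniz expansion of $g^{ik}g^{jl}R_{ij}R_{kl}$. The term-by-term identifications you describe (including the conversion of the connection variations into $\delta h$ and $d(tr h)$ contractions) are exactly those carried out in the paper.
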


\begin{proof}
Choose normal coordinates at any point $p \in M$, \emph{i.e.} $\Gamma_{jk}^i = 0$ at $p$, then
\begin{align*}
&\left.\frac{d^2}{dt^2}\right|_{t=0}(\Delta_{g(t)}R(g(t)))\\
=& (g^{ij}(\partial_i\partial_j R - \Gamma_{ij}^k \partial_k R))''\\
=& g''^{ij} \partial_i\partial_j R  + 2 g'^{ij}(\partial_i\partial_j R' - {\Gamma'}_{ij}^k \partial_k R) + g^{ij}(\partial_i\partial_j R'' - {\Gamma''}_{ij}^k \partial_k R - 2{\Gamma'}_{ij}^k \partial_k R')\\
=& 2 h_k^j h^{ik} \nabla_i \nabla_j R - 2 h^{ij} (\nabla_i\nabla_j R' - {\Gamma'}_{ij}^k \nabla_k R) + \Delta R'' - g^{ij} {\Gamma''}_{ij}^k \nabla_k R - 2 g^{ij} {\Gamma'}_{ij}^k \nabla_k R'\\
=& \Delta R'' + 2 \nabla^2 R \cdot (h \times h) - 2 h\cdot \nabla^2 R' + ( 2 \delta h + d trh) \cdot dR' +  h^{ij} ( 2 \nabla_i h_j^k - \nabla^k h_{ij}) \nabla_k R\\
 &- h \cdot ((2 \delta h + d tr h) \otimes d R),
\end{align*}
by substituting the expression of ${\Gamma'}_{ij}^k$ and ${\Gamma''}_{ij}^k$.\\

And
\begin{align*}
&\left.\frac{d^2}{dt^2}\right|_{t=0} |Ric(g(t))|^2_{g(t)}\\
=& \left( g^{ik} g^{jl} R_{ij} R_{kl} \right)''\\
=& 2 g''^{ik}R_{ij} R_k^j + 2 g'^{ik} g'^{jl} R_{ij} R_{kl} + 8 g'^{ik} R'_{ij} R_k^j + 2 R''_{ij} R^{ij} + 2 g^{ik} g^{jl} R'_{ij} R'_{kl}\\
=& 4 (Ric \times Ric) \cdot ( h \times h) + 2 |Ric \times h|^2 - 8 Ric' \cdot (Ric \times h) + 2 Ric'' \cdot Ric + 2|Ric'|^2. \\
\end{align*}

Also,
\begin{align*}
\left.\frac{d^2}{dt^2}\right|_{t=0} R(g(t))^2 = (R^2)'' = 2 (R')^2 + 2 R R''.
\end{align*}

We prove the lemma by combining all three parts together.
\end{proof}

Simply by taking $Ric = 0$ and $R = 0$, we get the second variation for $Q$-curvature at a Ricci flat metric.
\begin{corollary}\label{2nd_variation_Ricci_flat}
Suppose the metric $g$ is Ricci flat, then
\begin{align}
D^2 Q_g \cdot ( h, h )
=A_n(\Delta_{g}R'' - 2 h \cdot \nabla^2 R' + ( 2 \delta h + d (tr h)) \cdot dR') + 2 B_n |Ric'|^2 + 2C_n(R')^2.
\end{align}
\end{corollary}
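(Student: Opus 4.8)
The plan is to obtain this corollary as a direct specialization of the general second variation formula established in the preceding Lemma, by imposing the Ricci flat hypothesis $Ric_g = 0$. First I would record the consequences of this assumption for the background curvature: since $Ric_g \equiv 0$ we automatically have $R_g = 0$, and therefore $dR_g = 0$ and $\nabla^2 R_g = 0$ as well. The \emph{variations} $R'$, $R''$, $Ric'$, $Ric''$ are, however, not assumed to vanish, since they measure the response of the curvature to the perturbation $h$; so the plan is to keep every term built from these while discarding every term that carries an explicit factor of the background quantities $Ric_g$, $R_g$, $dR_g$, or $\nabla^2 R_g$.

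Next I would substitute into the three bracketed groups of the general formula and prune term by term. In the $A_n$-bracket the terms $2\nabla^2 R \cdot (h\times h)$, $h^{ij}(2\nabla_i h_j^k - \nabla^k h_{ij})\nabla_k R$, and $h \cdot ((2\delta h + d(tr h))\otimes dR)$ each contain a factor of $\nabla^2 R$, $\nabla R$, or $dR$ and hence vanish, leaving only $\Delta_g R'' - 2 h\cdot \nabla^2 R' + (2\delta h + d(tr h))\cdot dR'$. In the $B_n$-bracket the first four terms, namely $4(Ric\times Ric)\cdot(h\times h)$, $2|Ric\times h|^2$, $-8 Ric'\cdot(Ric\times h)$, and $2 Ric''\cdot Ric$, all carry a factor of $Ric_g$ and drop out, leaving only $2|Ric'|^2$. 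In the $C_n$-bracket the term $2 R R''$ vanishes, leaving $2(R')^2$.

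Collecting the surviving terms yields exactly the claimed expression for $D^2 Q_g \cdot (h,h)$. Since this is a pure substitution into an already-proven identity, there is essentially no analytic obstacle; the only thing requiring care is the bookkeeping, that is, correctly distinguishing in each bracket the terms carrying a \emph{background}-curvature factor (which die) from those carrying only a \emph{variation} of the curvature (which persist), so that none is erroneously dropped or retained. This formula is exactly the starting point one wants for the subsequent local rigidity analysis of flat metrics, where $h$ is tangent to the space of metrics with $Q_g \geq 0$ and the sign of this quadratic form becomes decisive.
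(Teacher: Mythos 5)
Your proposal is correct and is exactly the paper's own argument: the paper obtains the corollary ``simply by taking $Ric = 0$ and $R = 0$'' in the general second variation formula of the preceding Lemma. Your term-by-term bookkeeping (discarding every term carrying a background factor $Ric$, $R$, $dR$, or $\nabla^2 R$ while retaining all terms built from the variations $R'$, $R''$, $Ric'$) reproduces the stated expression precisely.
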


Now we assume $(M,\bar{g}, f)$ is a $Q$-singular metric.\\

Consider the functional
\begin{align*}
\mathscr{F}(g)=\int_M Q_g \cdot f dv_{\bar{g}}.
\end{align*}
Note that here we fix the volume form to be the one associated to the $Q$-singular metric $\bar{g}$.\\

\begin{remark}
The analogous functional
\begin{align*}
\mathscr{G}(g)=\int_M R_g \cdot f dv_{\bar{g}},
\end{align*}
plays a fundamental role in studying rigidity phenomena of vacuum static spaces (c.f. \cite{F-M, B-M, Q-Y_1}, \emph{etc.}).
\end{remark}

\begin{lemma}
The metric $\bar{g}$ is a critical point of the functional $\mathscr{F}(g)$.
\end{lemma}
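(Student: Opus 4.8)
The plan is to compute the first variation of $\mathscr{F}$ at $\bar{g}$ and show it vanishes in every direction $h \in S_2(M)$. The essential observation is that the volume form in $\mathscr{F}$ is frozen at $dv_{\bar{g}}$, so differentiating along a path $g(t) = \bar{g} + th$ only touches the integrand $Q_{g(t)}$. Hence
\[
\left.\frac{d}{dt}\right|_{t=0} \mathscr{F}(g(t)) = \int_M \left(\left.\frac{d}{dt}\right|_{t=0} Q_{g(t)}\right) f \, dv_{\bar{g}} = \int_M (\Gamma_{\bar{g}} h)\, f \, dv_{\bar{g}},
\]
where $\Gamma_{\bar{g}} h = DQ_{\bar{g}} \cdot h$ is the linearization recorded in Proposition \ref{Q_1st_variation}.

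Next I would invoke the defining property of the $L^2$-formal adjoint $\Gamma_{\bar{g}}^*$ (Proposition \ref{prop:Gamma^*}) to transfer the fourth-order operator off of $h$ and onto $f$:
\[
\int_M (\Gamma_{\bar{g}} h)\, f \, dv_{\bar{g}} = \int_M \langle \Gamma_{\bar{g}}^* f,\, h \rangle \, dv_{\bar{g}}.
\]
Finally, the $Q$-singular hypothesis says precisely that $f \in \ker \Gamma_{\bar{g}}^*$, i.e. $\Gamma_{\bar{g}}^* f = 0$, so the right-hand side vanishes for every $h \in S_2(M)$. Since $h$ is arbitrary, the first variation $D\mathscr{F}_{\bar{g}}$ is identically zero, and therefore $\bar{g}$ is a critical point of $\mathscr{F}$.

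There is essentially no serious obstacle here: the statement is a formal consequence of the adjoint relation combined with the $Q$-singular condition. The only point requiring genuine care is the choice to integrate against the \emph{fixed} volume form $dv_{\bar{g}}$ rather than $dv_{g}$. Had the volume form been allowed to vary, the computation would pick up the extra term $\frac{1}{2}\int_M Q_{\bar{g}}\, (\operatorname{tr}_{\bar{g}} h)\, f\, dv_{\bar{g}}$ coming from $\left.\frac{d}{dt}\right|_{t=0} dv_{g(t)} = \frac{1}{2}(\operatorname{tr}_{\bar{g}} h)\, dv_{\bar{g}}$, and one would need to control it separately (using, for instance, that $Q_{\bar{g}}$ is constant by Theorem \ref{thm:Q_const}). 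With the volume frozen, this complication does not arise, and the clean cancellation $\Gamma_{\bar{g}}^* f = 0$ suffices.
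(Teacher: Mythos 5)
Your proof is correct and is essentially identical to the paper's: both differentiate $\mathscr{F}$ along $g(t)=\bar{g}+th$ with the volume form frozen, pass to the adjoint via $\int_M (\Gamma_{\bar{g}}h)\,f\,dv_{\bar{g}} = \int_M \langle \Gamma_{\bar{g}}^* f, h\rangle\,dv_{\bar{g}}$, and conclude from $\Gamma_{\bar{g}}^* f = 0$. Your closing remark about the extra term $\frac{1}{2}\int_M Q_{\bar{g}}(\operatorname{tr}_{\bar{g}}h)f\,dv_{\bar{g}}$ that would appear if the volume form varied is a sound observation but is not needed, as the paper's functional fixes $dv_{\bar{g}}$ by design.
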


\begin{proof}
For any symmetric 2-tensor $h \in S_2$, let $g(t) = \bar{g} + t h$, $t \in (-\varepsilon, \varepsilon)$ be a family of metrics on $M$. clearly, $g(0) = \bar{g}$ and $g'(0) = h$. Then
\begin{align*}
\left. \frac{d}{dt}\right|_{t=0} \mathscr{F}(g(t)) = \int_M DQ_{\bar{g}} \cdot h f dv_{\bar{g}} = \int_M \Gamma_{\bar{g}} h \cdot f dv_{\bar{g}} = \int_M h \cdot \Gamma_{\bar{g}}^* f \ dv_{\bar{g}} = 0,
\end{align*}
\emph{i.e.} $\bar{g}$ is a critical point for the functional $\mathscr{F}(g)$.
\end{proof}

Furthermore, if we assume $\bar{g}$ is a flat metric, by Theorem \ref{Static_Ricci_flat}, we can take $f$ to be a nonzero constant. In particular, we can take $f \equiv 1$, since $Q$-singular equation is a linear equations system with respect to $f$.\\

\begin{lemma}\label{2nd_variation_flat}
Let $\bar{g}$ be a flat metric and $f \equiv 1$. Suppose $\delta h = 0$, then the second variation of $\mathscr{F}$ at $\bar{g}$ is given by
\begin{align}
D^2 \mathscr{F}_{\bar{g}} \cdot (h,h) = -2 \alpha_n \int_M|\Delta(tr h)|^2 dv_{\bar{g}} + \frac{1}{2}B_n\int_M(|\Delta \overset{\circ}{h}|^2)dv_{\bar{g}},
\end{align}
where $\overset{\circ}{h}$ is the traceless part of $h$ and $\alpha_n := - \frac{1}{2}\left(A_n + \frac{n+1}{2n}B_n + 2C_n \right)= \frac{(n^2 - 2) (n^2 - 2n - 2)}{8n(n-1)^2(n-2)^2} > 0$, $B_n=-\frac{2}{(n-2)^2}< 0$, for any $n \geq 3$.
\end{lemma}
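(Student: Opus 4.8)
The plan is to exploit the fact that with $f \equiv 1$ (which is legitimate by Theorem \ref{Static_Ricci_flat}) and the volume form frozen at $\bar{g}$, the functional collapses to $\mathscr{F}(g) = \int_M Q_g \, dv_{\bar{g}}$, so that $D^2\mathscr{F}_{\bar{g}}\cdot(h,h) = \int_M D^2 Q_{\bar{g}}\cdot(h,h)\, dv_{\bar{g}}$. Since a flat metric is in particular Ricci flat, I can feed in the pointwise second variation already recorded in Corollary \ref{2nd_variation_Ricci_flat}, which splits into an $A_n$-block, a $B_n$-block and a $C_n$-block. Before integrating, I would record the first variations specialized to a flat background with $\delta h = 0$: from (\ref{scalar_1st_variation}) one has $R' = -\Delta(tr\, h)$, and from the Ricci-variation formula (with $\Delta_L = \Delta$ and $Ric = 0$) one has $Ric' = -\frac{1}{2}\left(\Delta h + \nabla^2(tr\, h)\right)$. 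These two expressions are the only inputs to all the integrals.

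Next I would integrate the $A_n$-block term by term. The $\Delta_{\bar{g}} R''$ term integrates to zero over the closed manifold; the term $-2\, h\cdot\nabla^2 R'$ integrates to zero after one integration by parts, using $\delta h = 0$ to kill the resulting $(\delta h)\cdot dR'$; and the surviving term $(2\delta h + d(tr\, h))\cdot dR'$ reduces to $d(tr\, h)\cdot dR'$ and, upon substituting $R' = -\Delta(tr\, h)$ and integrating by parts twice, to $\int_M |\Delta(tr\, h)|^2\, dv_{\bar{g}}$. Hence the $A_n$-block contributes $A_n \int_M |\Delta(tr\, h)|^2$. Likewise the $C_n$-block is immediate: $2C_n\int_M (R')^2 = 2C_n \int_M |\Delta(tr\, h)|^2$.

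The main computation—and the step I expect to be the real work—is the $B_n$-block $\int_M 2B_n |Ric'|^2$. Here I would write $h = \overset{\circ}{h} + \frac{1}{n}(tr\, h)\,\bar{g}$, so that $Ric' = -\frac{1}{2}\big(\Delta\overset{\circ}{h} + \frac{1}{n}(\Delta\, tr\, h)\,\bar{g} + \nabla^2(tr\, h)\big)$, and expand $|Ric'|^2$ into diagonal and cross terms. The cross terms and the pure-trace term are resolved using three flat-space facts: derivatives commute; the identity $\delta\overset{\circ}{h} = \frac{1}{n}\, d(tr\, h)$, which follows from $\delta h = 0$; and the Bochner identity $\int_M |\nabla^2 u|^2 = \int_M |\Delta u|^2$ (valid since $Ric = 0$). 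Carrying this out gives $\int_M |Ric'|^2 = \frac{1}{4}\big(\int_M |\Delta\overset{\circ}{h}|^2 + \frac{n+1}{n}\int_M |\Delta(tr\, h)|^2\big)$, where the mixed term between $\Delta\overset{\circ}{h}$ and the trace part contributes $-\frac{1}{n}\int_M|\Delta(tr\, h)|^2$.

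Finally I would collect the coefficients. The factor multiplying $\int_M |\Delta(tr\, h)|^2$ is $A_n + \frac{n+1}{2n}B_n + 2C_n$, which by the very definition of $\alpha_n$ equals $-2\alpha_n$, while the factor multiplying $\int_M|\Delta\overset{\circ}{h}|^2$ is $\frac{1}{2}B_n$; this reproduces the claimed formula. The sign assertions $\alpha_n > 0$ and $B_n < 0$ for $n\ge 3$ are then a routine check of the explicit rational expressions for $A_n, B_n, C_n$. The one place demanding care is the $B_n$-block, since an error in any of the integration-by-parts steps there would corrupt both the $\frac{n+1}{n}$ coefficient and, ultimately, the identification with $\alpha_n$.
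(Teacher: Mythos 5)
Your proposal is correct and is essentially the paper's own argument: both integrate Corollary \ref{2nd_variation_Ricci_flat} over $M$, reduce the $A_n$- and $C_n$-blocks to $(A_n + 2C_n)\int_M |\Delta(tr\,h)|^2\, dv_{\bar g}$ using $\delta h = 0$ and $R' = -\Delta(tr\,h)$, and reduce $\int_M |Ric'|^2\, dv_{\bar g}$ to $\frac{1}{4}\int_M \bigl(|\Delta \overset{\circ}{h}|^2 + \frac{n+1}{n}|\Delta(tr\,h)|^2\bigr)\, dv_{\bar g}$ by integration by parts, commutation of derivatives on the flat background, and the flat Bochner identity, after which the coefficient identification with $-2\alpha_n$ and $\frac{1}{2}B_n$ is definitional. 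The only difference is one of ordering inside the $B_n$-block: you split $h = \overset{\circ}{h} + \frac{1}{n}(tr\,h)\bar g$ before integrating, so you must track a nonzero cross term via $\delta \overset{\circ}{h} = \frac{1}{n} d(tr\,h)$, whereas the paper integrates $|Ric'|^2 = \frac{1}{4}|\Delta h + \nabla^2 (tr\,h)|^2$ directly — there the cross term $\int_M \Delta h \cdot \nabla^2(tr\,h)\, dv_{\bar g}$ vanishes outright since $\delta \Delta h = \Delta \delta h = 0$ — and peels off the trace part only in the last line; both orderings yield the same result, as your stated outcome confirms.
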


\begin{proof}

With the aid of Corollary \ref{2nd_variation_Ricci_flat}, we have
\begin{align*}
&\left. \frac{d^2}{dt^2}\right|_{t=0} \mathscr{F}(g(t))\\
 =& \int_M Q''_{\bar{g}} dv_{\bar{g}}\\
 =& \int_M \left[A_n(\Delta R'' - 2 h \cdot \nabla^2 R' + ( 2 \delta h + d (tr h)) \cdot dR') + 2 B_n |Ric'|^2 + 2C_n(R')^2 \right] dv_{\bar{g}}\\
 =& \int_M \left[A_n(- 2 \delta h \cdot d R' + ( 2 \delta h + d (tr h)) \cdot dR') + 2 B_n |Ric'|^2 + 2C_n(R')^2 \right] dv_{\bar{g}}\\
 =& \int_M \left[A_n(   - \Delta (tr h)) \cdot R' + 2 B_n |Ric'|^2 + 2C_n(R')^2 \right] dv_{\bar{g}}\\
 =& \int_M \left[A_n(   - \Delta (tr h)) \cdot (- \Delta (tr h) + \delta^2 h - Ric \cdot h) + 2 B_n |Ric'|^2 + 2C_n(R')^2 \right] dv_{\bar{g}}\\
 =& \int_M \left[A_n( \Delta (tr h))^2 + 2 B_n |Ric'|^2 + 2C_n(R')^2 \right] dv_{\bar{g}},
\end{align*}
where the last step is due to the assumptions of $\bar{g}$ being a flat metric and divergence-free property of $h$.\\

For exact the same reasons, by Proposition \ref{1st_variation_Ricci_scalar}, we have
\begin{align*}
|Ric'|^2 = \frac{1}{4} | \Delta h  + \nabla^2 (tr h)|^2 = \frac{1}{4} \left( |\Delta h|^2 + 2 \Delta h \cdot \nabla^2 (trh) + |\nabla^2 (tr h)|^2 \right)
\end{align*}
and
\begin{align*}
(R')^2 = ( \Delta (tr h))^2.
\end{align*}

Thus
\begin{align*}
  \int_M |Ric'|^2 dv_{\bar{g}}=& \frac{1}{4} \int_M\left[\left( |\Delta h|^2 + 2 \Delta h \cdot \nabla^2 (trh) + |\nabla^2 (tr h)|^2 \right)\right] dv_{\bar{g}}\\
 =& \frac{1}{4}\int_M \left[ \left( |\Delta h|^2 + 2 \delta\Delta h \cdot d(trh) + \delta\nabla^2 (tr h) \cdot d(tr h) \right)\right] dv_{\bar{g}}\\
 =& \frac{1}{4}\int_M \left[\left( |\Delta h|^2 + 2 \Delta \delta h \cdot d(trh) + \delta d (tr h) \cdot \delta d(tr h) \right)\right] dv_{\bar{g}}\\
 =& \frac{1}{4}\int_M \left[\left( |\Delta h|^2  + |\Delta (tr h)|^2 \right)\right] dv_{\bar{g}}\\
 =& \frac{1}{4}\int_M \left[ |\Delta \overset{\circ}{h}|^2 + \frac{n+1}{n}( \Delta (tr h))^2  \right] dv_{\bar{g}}.
\end{align*}

Now
\begin{align*}
\left. \frac{d^2}{dt^2}\right|_{t=0} \mathscr{F}(g(t)) =& \int_M \left[(A_n + \frac{n+1}{2n}B_n + 2 C_n)( \Delta (tr h))^2 +\frac{1}{2}B_n |\Delta \overset{\circ}{h}|^2 \right] dv_{\bar{g}}\\
 =& -2 \alpha_n \int_M|\Delta(tr h)|^2 dv_{\bar{g}} + \frac{1}{2}B_n\int_M(|\Delta \overset{\circ}{h}|^2)dv_{\bar{g}}.
\end{align*}

This gives the equation we claimed.
\end{proof}

Now we are ready to prove Theorem \ref{flat_local_rigidity}.

\begin{theorem}
For $n \geq 3$, let $(M^n,\bar{g})$ be a closed flat Riemannian manifold and $g$ be a metric on $M$ with $$Q_g \geq 0.$$ Suppose $||g - \bar{g}||_{C^2(M, \bar{g})}$ is sufficiently small, then $g$ is also flat.
\end{theorem}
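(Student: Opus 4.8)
The plan is to combine the Slice Theorem with the second variation computation of Lemma \ref{2nd_variation_flat} via a Taylor expansion argument. First I would reduce to the case of a divergence-free perturbation: given $g$ with $\|g-\bar g\|_{C^2(M,\bar g)}$ small, the Slice Theorem (Theorem \ref{Slice_Theorem}) produces a diffeomorphism $\varphi$ so that $h:=\varphi^*g-\bar g$ satisfies $\delta_{\bar g}h=0$, with $\|h\|_{W^{2,p}}\leq N\|g-\bar g\|_{W^{2,p}}$. Since $Q$-curvature is diffeomorphism-invariant, $Q_{\varphi^*g}=Q_g\circ\varphi\geq 0$, so it suffices to show the pulled-back metric $\bar g+h$ is flat; flatness is itself diffeomorphism-invariant, so this recovers flatness of $g$. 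Thus I may assume from the outset that $h=g-\bar g$ is divergence-free and $C^2$-small.

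Next I would expand the functional $\mathscr{F}(g)=\int_M Q_g\, dv_{\bar g}$ (taking $f\equiv 1$, which is legitimate since $\bar g$ is flat hence Ricci flat) to second order in $t$ along $g(t)=\bar g+th$. Because $\bar g$ is a critical point of $\mathscr{F}$ and $g''_{ij}=0$, Taylor's theorem gives
\begin{align*}
\int_M Q_{\bar g+h}\, dv_{\bar g}=\frac{1}{2}D^2\mathscr{F}_{\bar g}\cdot(h,h)+o(\|h\|^2).
\end{align*}
On the other hand, the total integral $\int_M Q_g\, dv_g$ of $Q$-curvature is conformally invariant and, more to the point, for a flat background one checks that $\int_M Q_{\bar g+h}\,dv_{\bar g}\leq 0$ with the correct sign control, while the hypothesis $Q_g\geq 0$ forces $\int_M Q_g\,dv_{\bar g}\geq$ something non-negative after accounting for the volume-form discrepancy. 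Concretely, I would compare the integral against the flat background using $Q_{\bar g}\equiv 0$ and the sign of $D^2\mathscr{F}_{\bar g}$ computed in Lemma \ref{2nd_variation_flat}:
\begin{align*}
D^2\mathscr{F}_{\bar g}\cdot(h,h)=-2\alpha_n\int_M|\Delta(\mathrm{tr}\,h)|^2\,dv_{\bar g}+\frac{1}{2}B_n\int_M|\Delta\overset{\circ}{h}|^2\,dv_{\bar g},
\end{align*}
which is \emph{manifestly nonpositive} since $\alpha_n>0$ and $B_n<0$.

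The crux of the argument is then a sign contradiction. The hypothesis $Q_g\geq 0$ together with $\int_M Q_g\,dv_g=\int_M Q_{\bar g}\,dv_{\bar g}=0$ on the torus-like flat background (or the appropriate total-$Q$ identity for flat manifolds, where the Gauss-Bonnet-Chern-type total is controlled) pins $\int_M Q_g\,dv_{\bar g}$ to be nonnegative to leading order, whereas the Taylor expansion shows it equals $\tfrac12 D^2\mathscr{F}_{\bar g}\cdot(h,h)+o(\|h\|^2)\leq o(\|h\|^2)$. Matching these forces the quadratic form to vanish, i.e. $\Delta(\mathrm{tr}\,h)\equiv 0$ and $\Delta\overset{\circ}{h}\equiv 0$; on a closed manifold this yields $\mathrm{tr}\,h$ constant and $\overset{\circ}{h}$ harmonic, hence $R'=0$, $Ric'=0$, and ultimately $\mathrm{Ric}_g=0$. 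Since a metric $C^2$-close to a flat metric with vanishing Ricci curvature on a flat manifold must itself be flat (the full curvature tensor is controlled by Ricci when the background is flat and the perturbation is small, using that flat manifolds have no nearby non-flat Ricci-flat deformations other than flat ones), I would conclude $g$ is flat. The main obstacle I anticipate is making the sign comparison between $\int_M Q_g\,dv_g$ and $\int_M Q_g\,dv_{\bar g}$ rigorous: the two volume forms differ, and I must verify that the discrepancy is higher order and does not spoil the nonpositivity extracted from $D^2\mathscr{F}_{\bar g}$. Handling this carefully, and upgrading the pointwise conclusions $\Delta(\mathrm{tr}\,h)=0$, $\Delta\overset{\circ}{h}=0$ to genuine flatness (rather than merely Ricci-flatness) for the perturbed metric, is where the real work lies.
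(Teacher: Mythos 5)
Your skeleton (Slice Theorem to reduce to a divergence-free $h$, then a second-order expansion of $\mathscr{F}(g)=\int_M Q_g\,dv_{\bar g}$ about the critical point $\bar g$) is exactly the paper's, but the crux of your argument --- ``matching these forces the quadratic form to vanish'' --- does not follow as stated. From $\mathscr{F}(\varphi^*g)\geq 0$, $D^2\mathscr{F}_{\bar g}\cdot(h,h)\leq 0$, and a remainder of size $o(\|h\|^2)$, you can only conclude $|D^2\mathscr{F}_{\bar g}\cdot(h,h)|\leq o(\|h\|^2)$: the quadratic form is \emph{small}, not zero, and smallness relative to an unspecified norm of $h$ yields no pointwise conclusion. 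What the paper does instead is quantify the remainder, $|E_3|\leq C_0\int_M |h|\,|\nabla^2 h|^2\,dv_{\bar g}$, use that on a closed flat manifold $\int_M|\Delta h|^2\,dv_{\bar g}=\int_M|\nabla^2 h|^2\,dv_{\bar g}$, and then \emph{absorb}: choosing $\mu_n>0$ with $\min\{\alpha_n-\mu_n/n,\,-B_n/4-\mu_n\}>0$, one gets
\begin{align*}
\mu_n\int_M|\nabla^2 h|^2\,dv_{\bar g}\;\leq\; -\mathscr{F}(\varphi^*g)+E_3\;\leq\; C_0\,\|h\|_{C^0}\int_M|\nabla^2 h|^2\,dv_{\bar g},
\end{align*}
so that $\|h\|_{C^0}<\mu_n/(2C_0)$ forces $\nabla^2 h\equiv 0$ exactly. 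The point is that the error must be dominated by the \emph{same quantity} the quadratic form controls, namely $\int|\nabla^2 h|^2$, not merely be $o(\|h\|^2)$. You flagged this as ``where the real work lies,'' but that work is precisely the proof; without it the sign comparison proves nothing.

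Two further problems. First, the volume-form discrepancy you worry about is a non-issue, and the tools you reach for are wrong: conformal invariance of $\int Q\,dv$ holds only in dimension $4$, and there is no identity $\int_M Q_g\,dv_g=0$ for general metrics on a flat manifold (the theorem covers all $n\geq 3$). All that is needed is that $\mathscr{F}$ is defined with the \emph{fixed} measure $dv_{\bar g}$, so $\mathscr{F}(\varphi^*g)=\int_M (Q_g\circ\varphi)\,dv_{\bar g}\geq 0$ follows pointwise from $Q_g\geq 0$; no comparison of volume forms enters. Second, your endgame conflates linearized and actual curvature: $\Delta(\mathrm{tr}\,h)=0$ and $\Delta\overset{\circ}{h}=0$ give at best $Ric'[h]=0$, which does not imply $Ric_g=0$ for the perturbed metric, so the appeal to ``Ricci-flat metrics near flat ones are flat'' rests on a non sequitur. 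The paper's conclusion avoids this entirely: from $\nabla^2 h=0$ one gets $\int_M|\nabla h|^2\,dv_{\bar g}=-\int_M h\cdot\Delta h\,dv_{\bar g}=0$, hence $h$ is parallel, and then in local flat coordinates the Christoffel symbols of $\varphi^*g$ vanish, so $\varphi^*g$ (hence $g$) is flat. (Your weaker intermediate conclusion is repairable --- $\Delta h\equiv 0$ on a closed manifold also gives $\nabla h=0$ by integration by parts --- but the route you describe through the nonlinear Ricci curvature is not justified as written.)
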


\begin{proof}

Since $g$ is $C^2$-closed to $\bar{g}$, by the \emph{Slice Theorem} (Theorem \ref{Slice_Theorem}), there exists a diffeomorphism $\varphi \in \mathscr{D}( M ) $ such that $$h := \varphi^* g - \bar{g}$$ is divergence free with respect to $\bar{g}$ and $$||h||_{C^2(M,\bar{g})} \leq N ||g-\bar{g}||_{C^2(M,\bar{g})},$$ where $N > 0$ is a constant only depends on $(M,\bar{g})$.\\

Apply Lemma \ref{2nd_variation_flat}, we can expand $\mathscr{F}(\varphi^*g)$ at $\bar{g}$ as follows,
\begin{align}\label{eqn:F_expansion}
\mathscr{F}(\varphi^*g) &= \mathscr{F}(\bar g) + D\mathscr{F}_{\bar g} \cdot h + \frac{1}{2}D^2 \mathscr{F}_{\bar g} \cdot (h, h) + E_3\\
\notag   &=   - \alpha_n \int_M|\Delta(tr h)|^2 dv_{\bar{g}} + \frac{1}{4}B_n\int_M |\Delta \overset{\circ}{h}|^2 dv_{\bar{g}} + E_3,
\end{align}
where $$|E_3| \leq C_0 \int_M |h|\ |\nabla^2 h|^2 \ dv_{\bar{g}}$$ for some constant $C_0=C_0 (n, M, \bar g)> 0$.\\

We know $$\mathscr{F}(\varphi^*g) = \int_M Q_g\circ\varphi\ dv_{\bar{g}} \geq 0$$ since $Q_{g}\geq0$ and $\varphi$ is a diffeomorphism near the identity. Since
$$\alpha_n =-\frac{1}{2}\left(A_n + \frac{n+1}{2n}B_n + 2C_n \right) > 0,\ \ B_n = - \frac{2}{(n-2)^2} < 0$$ for $n\geq 3$, we can take 
$\mu_n > 0$, a sufficiently small constant only depends on the dimension $n$ such that
$$\min\{ \alpha_n - \frac{\mu_n}{n} , - \frac{1}{4}B_n- \mu_n\} > 0.$$

Therefore, with the aid of equation (\ref{eqn:F_expansion}), we have
\begin{align*}
&\mu_n \int_M |\Delta h|^2 dv_{\bar{g}}\\
\leq& \left( \alpha_n - \frac{\mu_n}{n}\right)\int_M|\Delta(tr h)|^2 dv_{\bar{g}} + \left( - \frac{1}{4}B_n- \mu_n \right)\int_M |\Delta \overset{\circ}{h}|^2 dv_{\bar{g}} + \mu_n \int_M |\Delta h|^2 dv_{\bar{g}}\\
=&  \alpha_n \int_M|\Delta(tr h)|^2 dv_{\bar{g}} - \frac{1}{4}B_n\int_M |\Delta \overset{\circ}{h}|^2 dv_{\bar{g}}\\
=& - \mathscr{F}(\varphi^*g) + E_3\\
\leq& |E_3|\\
\leq& C_0 \int_M |h|\ |\nabla^2{h}|^2 \ dv_{\bar{g}}.
\end{align*}

Suppose $g$ is sufficiently $C^2$-closed to $\bar g$, say $||g-\bar{g}||_{C^2(M,\bar{g})} < \frac{\mu_n}{2NC_0}$, then $$||h||_{C^0(M,\bar{g})} \leq ||h||_{C^2(M,\bar{g})} \leq N ||g-\bar{g}||_{C^2(M,\bar{g})} < \frac{\mu_n}{2C_0}$$ and therefore,
\begin{align*}
\mu_n\int_M |\nabla^2 h|^2dv_{\bar{g}} = \mu_n\int_M |\Delta h|^2dv_{\bar{g}} \leq C_0 \int_M |h|\ |\nabla^2{h}|^2 \ dv_{\bar{g}} \leq \frac{\mu_n}{2}\int_M |\nabla^2 h|^2dv_{\bar{g}},
\end{align*}
which implies $\nabla^2 h = 0$ on $M$. \\

Now we have
\begin{align*}
\int_M |\nabla h |^2 dv_{\bar{g}} = - \int_M h \Delta h\  dv_{\bar{g}} = 0.
\end{align*}
That is $\nabla h = 0$.\\

Since $\bar{g}$ is flat, then on a neighborhood $U_p$ for any $ p \in M$, we can find a local coordinates, such that $\bar{g}_{ij} = \delta_{ij}$ and $\partial_k \bar{g}_{ij} = 0$, $i,j,k = 1, \cdots, n$ on $U_p$.\\

Under the same coordinates, Christoffel symbols of $\varphi^*g$ are
\begin{align*}
\Gamma^k_{ij}(\varphi^*g) &= \frac{1}{2} (\varphi^*g)^{kl} \left(\partial_i (\bar{g}_{jl} + h_{jl}) + \partial_j (\bar{g}_{il} + h_{il}) - \partial_l (\bar{g}_{ij} + h_{ij}) \right) \\
&= \frac{1}{2} (\varphi^*g)^{kl} (\nabla_i  h_{jl} + \nabla_j h_{il} - \nabla_l h_{ij}) = 0
\end{align*}
on $U_p$, since $h$ is parallel with respect to $\bar{g}$. \\

Thus the Riemann curvature tensor of $\varphi^* g$ vanishes identically on $U_p$ for any $p \in M$, which implies that the metric $\varphi^*g$ is flat and so is $g$.
\end{proof}

\begin{remark}
Fischer and Marsden proved an analogous result for the scalar curvature. (see \cite{F-M}) 
\end{remark}

As an application, we can get the local rigidity of $Q$-curvature on compact domain of $\mathbb{R}^n$, which can be thought as an analogue of rigidity  part of \emph{Positive Mass Theorem}.

\begin{corollary}\label{flat_domain_rigidity}
Suppose $\Omega \subset \mathbb{R}^n$ is a compactly contained domain. Let $\delta$ be the flat metric and $g$ be a metric on $\mathbb{R}^n$ satisfying
\begin{itemize}
\item $Q_g \geq 0$,
\item $supp(g - \delta) \subset \Omega$,
\item $||g - \delta||_{C^2(\mathbb{R}^n, \delta)}$ is sufficiently small;
\end{itemize}
then $g$ is also flat.
\end{corollary}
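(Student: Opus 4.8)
The plan is to reduce Corollary \ref{flat_domain_rigidity} to the closed case already established in Theorem \ref{flat_local_rigidity} by compactifying $\mathbb{R}^n$ into a flat torus. The essential observation is that since $g - \delta$ is supported in the bounded set $\Omega$, the metric $g$ coincides with the Euclidean metric outside a compact region, so it descends to a globally defined smooth metric on a suitable flat torus on which the closed rigidity theorem applies.

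First I would choose a cube $C = [-L, L]^n$ with $L$ large enough that $\bar{\Omega} \subset \mathrm{int}(C)$, and form the flat torus $(T^n, \bar{g})$ by identifying opposite faces of $C$; here $\bar{g}$ is the metric induced by $\delta$, which is flat. Because $\mathrm{supp}(g - \delta) \subset \Omega$ and $\Omega$ is compactly contained in the interior of $C$, the metric $g$ agrees with $\delta$ on an honest neighborhood of $\partial C$, hence it glues across the identified faces into a smooth Riemannian metric on $T^n$, which I again denote by $g$. By construction $g = \bar{g}$ outside the image of $\Omega$.

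Next I would check that the hypotheses transfer directly. Since $Q_g$ is a pointwise local quantity and $Q_g \geq 0$ on $\mathbb{R}^n$, we also have $Q_g \geq 0$ on $T^n$. Moreover, because $g - \bar{g}$ vanishes outside the image of $\Omega$ and equals $g - \delta$ there, its $C^2(T^n, \bar{g})$-norm equals $||g - \delta||_{C^2(\mathbb{R}^n, \delta)}$, which is small by assumption. Thus $(T^n, \bar{g})$ is a closed flat manifold and $g$ is a $C^2$-small perturbation of $\bar{g}$ with nonnegative $Q$-curvature. Applying Theorem \ref{flat_local_rigidity} yields that $g$ is flat on $T^n$. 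Restricting to the fundamental domain $\mathrm{int}(C) \supset \bar{\Omega}$ shows $g$ is flat on $\Omega$; since $g = \delta$ is flat elsewhere, $g$ is flat on all of $\mathbb{R}^n$.

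I do not anticipate a serious analytic obstacle, since the entire content of the rigidity already resides in Theorem \ref{flat_local_rigidity}; the work here is the routine but necessary verification that the compactly supported perturbation compactifies smoothly and that both the sign condition on $Q_g$ and the $C^2$-smallness are preserved under the identification. The only point requiring mild care is choosing $L$ so that $g = \delta$ holds on a genuine neighborhood of $\partial C$, which guarantees $C^\infty$-smoothness of the glued metric across the faces rather than mere continuity.
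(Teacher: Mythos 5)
Your proposal is correct and is essentially identical to the paper's own proof: the paper likewise encloses $\Omega$ in a rectangular domain, identifies opposite faces to obtain a flat torus carrying the glued metric with $Q_g \geq 0$ and small $C^2$-norm, and then invokes Theorem \ref{flat_local_rigidity}. Your write-up simply makes explicit the smoothness of the gluing and the transfer of hypotheses, which the paper leaves implicit.
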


\begin{proof}

Since $\Omega$ is compactly contained in $\mathbb{R}^n$, we can choose a rectangle domain $\Omega'$ which contains $\Omega$ strictly. Thus, the metric $g$ is identically the same as the Euclidean metric on $\Omega' - \Omega$. Now we can derive a metric with nonnegative $Q$-curvature on the torus $T^n$ by identifying the boundary of $\Omega'$ correspondingly. Clearly, this new metric on $T^n$ is $C^2$-closed to the flat metric, hence has to be flat by Theorem \ref{flat_local_rigidity}. Now the claim follows.
 
\end{proof}

It would be interesting to ask whether there is a global rigidity result for $Q$-curvature. No result is known so far to the best of authors' knowledge, but we observed that the global rigidity holds in some special cases.

\begin{proposition}\label{prop:conf_Ricci_flat}
Let $(M, g)$ be a closed Riemannian manifold with $$Q_g \geq 0$$ pointwisely. Suppose that $g$ is conformal to a Ricci flat metric, then $(M, g)$ is Ricci flat.
\end{proposition}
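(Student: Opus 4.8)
The plan is to use the conformal covariance of the Paneitz operator, exploiting that a Ricci flat background makes it degenerate to the bi-Laplacian. Let $\bar g$ denote the Ricci flat metric conformal to $g$. Since $R_{\bar g}=0$ and $Ric_{\bar g}=0$ we have $Q_{\bar g}=0$, and the defining formula for $P_{\bar g}$ collapses to $P_{\bar g}=\Delta_{\bar g}^2$, the bi-Laplacian of $\bar g$.

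First I would invoke the conformal transformation law for $Q$-curvature recorded in the appendix. For $n\neq 4$, writing $g=u^{\frac{4}{n-4}}\bar g$ with $u>0$ smooth, it reads $P_{\bar g}u=\frac{n-4}{2}Q_g\,u^{\frac{n+4}{n-4}}$, which with $P_{\bar g}=\Delta_{\bar g}^2$ becomes $\Delta_{\bar g}^2 u=\frac{n-4}{2}Q_g\,u^{\frac{n+4}{n-4}}$; for $n=4$, writing $g=e^{2w}\bar g$, it reads $P_{\bar g}w+Q_{\bar g}=Q_g e^{4w}$, which becomes $\Delta_{\bar g}^2 w=Q_g e^{4w}$.

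The key step is to integrate these identities over the closed manifold $M$ against $dv_{\bar g}$. Because $\Delta_{\bar g}^2(\cdot)$ is a divergence and $M$ has no boundary, the left-hand side integrates to zero, leaving $\frac{n-4}{2}\int_M Q_g\, u^{\frac{n+4}{n-4}}\,dv_{\bar g}=0$ when $n\neq 4$, and $\int_M Q_g\, e^{4w}\,dv_{\bar g}=0$ when $n=4$. In each case the integrand equals $Q_g$ times a strictly positive weight and a fixed nonzero constant, so the hypothesis $Q_g\geq 0$ forces $Q_g\equiv 0$ on $M$.

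Finally, with $Q_g\equiv 0$ the conformal factor solves the homogeneous equation $\Delta_{\bar g}^2 u=0$ (resp. $\Delta_{\bar g}^2 w=0$); pairing with $u$ (resp. $w$) and integrating by parts yields $\int_M(\Delta_{\bar g}u)^2\,dv_{\bar g}=0$, so the factor is harmonic and hence constant on the closed manifold $M$. Thus $g$ is a constant rescaling of the Ricci flat metric $\bar g$ and is itself Ricci flat. I expect the only delicate point to be the uniform bookkeeping of the sign of $\frac{n-4}{2}$ together with the positivity of the conformal weight across the ranges $n<4$, $n=4$, and $n>4$; once these are handled, the vanishing of the weighted total $Q$-curvature, and with it the conclusion, follows mechanically.
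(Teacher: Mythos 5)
Your proposal is correct, and it rests on the same foundation as the paper's proof: the conformal covariance laws (\ref{eqn:conf_Q_4}) and (\ref{eqn:conf_Q_n}) together with the observation that a Ricci flat background collapses the Paneitz operator to the bi-Laplacian $\Delta_{\bar g}^2$. Where you genuinely differ is the finishing argument. The paper reads the resulting equation pointwise: the hypothesis $Q_g \geq 0$ makes $\Delta_{\bar g}^2 u \geq 0$ (or $\leq 0$ when $n=3$, because of the sign of $\tfrac{2}{n-4}$), so $\Delta_{\bar g} u$ is subharmonic (or superharmonic) on the closed manifold, and the maximum principle forces it to be constant, hence zero, hence $u$ is constant. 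You instead integrate first: since $\int_M \Delta_{\bar g}^2 u \, dv_{\bar g} = 0$ on a closed manifold, the identity $\Delta_{\bar g}^2 u = \tfrac{n-4}{2} Q_g u^{\frac{n+4}{n-4}}$ (resp.\ $\Delta_{\bar g}^2 w = Q_g e^{4w}$ for $n=4$) forces the weighted total $Q$-curvature to vanish, and nonnegativity of $Q_g$ against a strictly positive weight gives $Q_g \equiv 0$; the energy identity $\int_M u\,\Delta_{\bar g}^2 u\, dv_{\bar g} = \int_M (\Delta_{\bar g} u)^2 dv_{\bar g}$ then yields harmonicity, hence constancy, of the conformal factor. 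Both arguments are complete. Yours buys uniformity across dimensions --- the constant $\tfrac{n-4}{2}$ need only be nonzero, so no case-by-case sign bookkeeping is required --- and it isolates $Q_g \equiv 0$ as an explicit intermediate conclusion, using nothing beyond the divergence theorem and self-adjointness of $\Delta_{\bar g}$. The paper's route is shorter once one accepts the maximum principle for subharmonic functions on closed manifolds, but it must track the sign of $\tfrac{2}{n-4}$ separately for $n=3$, $n=4$, and $n>4$.
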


\begin{proof}
For $n = 4$, by the assumptions, there exists a smooth function $u$ such that $g = e^{2u} \bar g$, where $\bar g$ is a Ricci flat metric on $M$. By (\ref{eqn:conf_Q_4}) in the appendix, we have 
$$Q_g = e^{-4u} ( P_{\bar g} u + Q_{\bar g} )= e^{-4u} \Delta_{\bar g}^2 u \geq 0.$$ Hence $ \Delta_{\bar g}u$ is subharmonic on $M$ which implies that $u$ is a constant. Therefore, $g$ is also Ricci flat. \\

Similarly, for $n \neq 4$, there exists a positive function $u > 0$ such that $g = u^{\frac{4}{n-4}}\bar{g}$, where $\bar g$ is a Ricci flat metric on $M$. Then by (\ref{eqn:conf_Q_n}), $$Q_g = \frac{2}{n-4}u^{-\frac{n+4}{n-4}} P_{\bar{g}} u =   \frac{2}{n-4} u^{-\frac{n+4}{n-4}} \Delta_{\bar{g}}^2 u. $$

For $n=3$, we have $$\Delta_{\bar{g}}^2 u \leq 0,$$ and for $n>4$, 
$$\Delta_{\bar{g}}^2 u \geq 0,$$ which imply that $u$ is a constant in both cases and thus $g$ is Ricci flat.

\end{proof}

In particular, we can consider tori $T^n$. First, we need a lemma which characterizes the conformally flat structure on $T^n$.

\begin{lemma}\label{lem:conf_structure_tori}
On the torus $T^n$, any locally conformally flat metric has to be conformal to a flat metric.
\end{lemma}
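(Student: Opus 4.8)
The plan is to exploit the developing map of the flat conformal structure determined by $g$. A locally conformally flat metric is precisely a $(\mathrm{Conf}(S^n), S^n)$-structure, so lifting to the universal cover $\mathbb{R}^n$ produces a developing map $D : \mathbb{R}^n \to S^n$ into the conformal sphere (a local conformal diffeomorphism) together with a holonomy homomorphism $\rho : \pi_1(T^n) = \mathbb{Z}^n \to \mathrm{Conf}(S^n) \cong O^+(1,n+1)$ for which $D$ is $\rho$-equivariant. The metric $g$ is conformal to a flat metric exactly when this structure is the standard Euclidean one, that is, when the holonomy group $\Gamma := \rho(\mathbb{Z}^n)$ can be conjugated into the group of Euclidean isometries fixing the point at infinity.

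First I would analyze the holonomy group $\Gamma$, a finitely generated abelian subgroup of $\mathrm{Conf}(S^n)$ acting properly discontinuously with compact quotient, since $T^n$ is closed. The strategy is to locate the limit set $\Lambda(\Gamma) \subset S^n$. Using that $\Gamma$ is abelian of rank $n$ and that the quotient of its domain of discontinuity is compact, one shows the limit set degenerates to a single point $p_\infty \in S^n$; equivalently, $\Gamma$ is purely parabolic and fixes $p_\infty$. Removing $p_\infty$ by stereographic projection identifies the stabilizer of $p_\infty$ in $\mathrm{Conf}(S^n)$ with the group of Euclidean similarities $\mathrm{Sim}(\mathbb{R}^n) = \mathbb{R}^n \rtimes (\mathbb{R}_{>0} \times O(n))$, so $\Gamma$ becomes an abelian subgroup of similarities acting properly discontinuously and cocompactly on $\mathbb{R}^n$.

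Next I would show that $\Gamma$ carries no nontrivial dilation component: a Bieberbach-type argument shows that a torsion-free abelian group of similarities acting properly discontinuously and cocompactly on $\mathbb{R}^n$ must consist of translations, so $\Gamma$ is a lattice $\Lambda \cong \mathbb{Z}^n$ of translations. The developing map then descends to a conformal diffeomorphism between $(T^n, g)$ and $\mathbb{R}^n / \Lambda$ with its Euclidean conformal structure, exhibiting $g$ as conformal to the flat metric induced by $\Lambda$.

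Alternatively, one may bypass the dynamical analysis by invoking the classification of closed conformally flat manifolds with virtually nilpotent fundamental group: such a manifold is conformally equivalent to a spherical space form, a Euclidean (flat) space form, or a manifold finitely covered by $S^1 \times S^{n-1}$. Since $\pi_1(T^n) = \mathbb{Z}^n$ is infinite it cannot be a spherical space form, and since it has rank $n \geq 3$ while the Hopf-type manifolds have fundamental group virtually $\mathbb{Z}$, the only possibility is a Euclidean space form, i.e. a flat torus. The main obstacle in the direct approach is the limit-set step—ruling out loxodromic holonomy and establishing uniqueness of the parabolic fixed point at infinity—since this is precisely where cocompactness of the quotient and the abelian structure of $\mathbb{Z}^n$ must be combined with care.
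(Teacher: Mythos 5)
Your primary, dynamical argument has a genuine gap at its very first step. For a $(G,X)$-structure the developing map $D\colon\widetilde{M}\to S^n$ is only a local diffeomorphism: it need not be injective, need not be a covering map onto its image, and the holonomy group $\Gamma=\rho(\pi_1)$ need not be discrete, let alone act properly discontinuously on a domain with quotient $T^n$. Your inference that ``$\Gamma$ acts properly discontinuously with compact quotient, since $T^n$ is closed'' is exactly the unjustified step: closedness of the manifold gives no such thing, and for closed conformally flat manifolds the developing map can genuinely fail to be a covering onto its image (connected sums already provide examples). Consequently the limit-set analysis, the conjugation of $\Gamma$ into $\mathrm{Sim}(\mathbb{R}^n)$, and the Bieberbach-type step all rest on the assumption that the structure is uniformized by a Kleinian group, and establishing that is precisely the hard content of the Fried--Goldman theory of similarity and conformally flat structures with nilpotent holonomy. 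So the difficulty is not merely ``ruling out loxodromic holonomy,'' as you say at the end; the Kleinian framework itself must first be justified, and for $\mathbb{Z}^n$-holonomy that justification is a deep theorem, not a remark.

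Your alternative route, by contrast, is sound: Goldman's classification of closed conformally flat manifolds with virtually nilpotent fundamental group (conformally a spherical space form, a Euclidean space form, or finitely covered by a Hopf manifold $S^1\times S^{n-1}$) applies to $\pi_1(T^n)=\mathbb{Z}^n$, and the first and third cases are excluded because $\mathbb{Z}^n$ is infinite and is not virtually $\mathbb{Z}$ (note this already works for rank $n\geq 2$, not just $n\geq 3$). That proves the lemma, but by a genuinely different route from the paper. The paper argues analytically: by the solution of the Yamabe problem, $g$ is conformal to a metric $\bar g$ of constant scalar curvature; if $R_{\bar g}<0$, Proposition 1.2 of Schoen--Yau's work on conformally flat manifolds and Kleinian groups would force $\pi_1(T^n)$ to be non-amenable, contradicting amenability of the abelian group $\mathbb{Z}^n$; hence $R_{\bar g}\geq 0$, and the Schoen--Yau/Gromov--Lawson torus theorem then makes $\bar g$ flat. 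Both routes lean on deep cited theorems; the paper's choice fits naturally with the scalar-curvature rigidity results it already invokes elsewhere, while yours, once supported by Goldman's theorem rather than the dynamical sketch, reaches the structural conclusion directly and avoids the Yamabe problem altogether.
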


\begin{proof}
Let $g$ be a locally conformally flat Riemannian metric on $T^n$. According to the solution of Yamabe problem, $g$ is conformal to a metric $\bar g$, whose scalar curvature is a constant. 

Suppose $R_{\bar g} < 0$, by Proposition 1.2 in \cite{S-Y_3}, the fundamental group of $T^n$ is non-amenable. But this contradicts to the fact that $\pi_1(T^n)$ is abelian, which is amenable. Thus, $R_{\bar g} \geq 0$, which implies that $\bar g$ is flat by famous results of Schoen-Yau and Gromov-Lawson (\cite{S-Y_1, S-Y_2, G-L_1, G-L_2}).
\end{proof}

Now we can derive the rigidity of tori with respect to nonnegative $Q$-curvature (Theorem \ref{thm:rigidity_tori}):

\begin{theorem}
Suppose $g$ is a locally conformally flat metric on $T^n$ with $$Q_g \geq 0,$$ then $g$ is flat. 

In particular, any metric $g$ on $T^4$ with nonnegative $Q$-curvature has to be flat.
\end{theorem}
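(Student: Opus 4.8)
The plan is to reduce the statement to the two results already established in the excerpt, namely Proposition \ref{prop:conf_Ricci_flat} and Lemma \ref{lem:conf_structure_tori}, and then to handle the special four-dimensional case separately. First I would observe that a locally conformally flat metric $g$ on $T^n$ is, by Lemma \ref{lem:conf_structure_tori}, conformal to a flat metric $\bar g$. Since a flat metric is in particular Ricci flat, $g$ is conformal to a Ricci flat metric. Invoking Proposition \ref{prop:conf_Ricci_flat} with the hypothesis $Q_g \geq 0$, I conclude that $(T^n, g)$ is Ricci flat. The final step is to upgrade ``Ricci flat'' to ``flat'': on a torus a Ricci flat metric must be flat. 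This follows because the conformal factor relating $g$ to $\bar g$ is shown to be constant inside the proof of Proposition \ref{prop:conf_Ricci_flat} (the subharmonicity/superharmonicity argument forces $u$ to be constant), so $g$ is a constant multiple of the flat metric $\bar g$ and hence flat itself.

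For the ``in particular'' clause about $T^4$, I would drop the local conformal flatness hypothesis and instead exploit dimension four directly. The key point is that in dimension four the Gauss-Bonnet-Chern formula (\ref{Gauss_Bonnet_Chern}) applies, and the Euler characteristic of $T^4$ vanishes, $\chi(T^4) = 0$. Thus
\begin{align*}
\int_{T^4} \left( Q_g + \frac{1}{4} |W_g|^2_g \right) dv_g = 0.
\end{align*}
Since $Q_g \geq 0$ by hypothesis and $|W_g|^2_g \geq 0$ always, both integrands are nonnegative and their sum integrates to zero, forcing $Q_g \equiv 0$ and $W_g \equiv 0$ identically on $T^4$. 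The vanishing of the Weyl tensor in dimension four (where $n \geq 4$) means precisely that $g$ is locally conformally flat. Having established local conformal flatness together with $Q_g \geq 0$, the first part of the theorem now applies verbatim and gives that $g$ is flat.

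I expect the main obstacle to be the clean passage from Ricci flat to flat on the torus, and more subtly the justification that the conformal factor is constant. Strictly speaking, Proposition \ref{prop:conf_Ricci_flat} as stated concludes only Ricci flatness, so I would want to extract from its proof the stronger conclusion that $u$ is constant; since $u$ is a solution of $\Delta_{\bar g}^2 u \geq 0$ (or $\leq 0$) on a closed manifold, the maximum principle applied to the subharmonic function $\Delta_{\bar g} u$ forces $\Delta_{\bar g} u \equiv 0$ and then $u$ constant, so $g = c\,\bar g$ for a constant $c$, which is genuinely flat rather than merely Ricci flat. The only other point requiring care is confirming that $T^4$ is orientable and closed so that (\ref{Gauss_Bonnet_Chern}) applies with $\chi(T^4)=0$; this is standard, as $T^4$ is a compact Lie group and hence parallelizable with vanishing Euler characteristic.
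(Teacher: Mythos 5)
Your proof follows essentially the same route as the paper: Lemma \ref{lem:conf_structure_tori} plus Proposition \ref{prop:conf_Ricci_flat} for the locally conformally flat case, and the Gauss-Bonnet-Chern formula with $\chi(T^4)=0$ to force $W_g \equiv 0$ and reduce the four-dimensional case to the first part. Your extra care in upgrading ``Ricci flat'' to ``flat'' --- extracting from the proof of Proposition \ref{prop:conf_Ricci_flat} that the conformal factor is constant, so $g$ is a constant multiple of the flat metric --- correctly fills a step that the paper's proof passes over without comment.
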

\begin{proof}
By Lemma \ref{lem:conf_structure_tori}, we can see that $g$ is conformal to a flat metric. Applying Proposition \ref{prop:conf_Ricci_flat}, we conclude that $g$ has to be flat.

In particular for $T^4$, we have the \emph{Gauss-Bonnet-Chern formula}, 
$$\int_{T^4} \left( Q_g + \frac{1}{4}|W_g|^2 \right) dv_{g} = 8\pi^2 \chi(T^4) = 0$$
on $T^4$. Thus the non-negativity of $Q$-curvature automatically implies that Weyl tensor $W_g$ vanishes identically on $T^4$, which means $g$ is locally conformally flat. Therefore, $g$ is flat by the previous argument. 
\end{proof}

As for dimension 3, we have the following result.

\begin{proposition}
The $3$-dimensional torus $T^3$ does not admit a metric with constant scalar curvature and nonnegative $Q$-curvature, unless it is flat.
\end{proposition}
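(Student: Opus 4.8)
The plan is to fix the sign of the (constant) scalar curvature $R_g$ and treat the three cases separately, reducing everything to the torus rigidity already established. Since $R_g$ is constant, $\Delta_g R_g = 0$, so in dimension three the $Q$-curvature collapses to the purely algebraic expression $Q_g = B_3|Ric_g|^2 + C_3 R_g^2 = -2|\overset{\circ}{Ric}_g|^2 + \frac{5}{96}R_g^2$, where I have split $Ric_g = \overset{\circ}{Ric}_g + \frac{R_g}{3}g$ into its trace-free and trace parts and used $|Ric_g|^2 = |\overset{\circ}{Ric}_g|^2 + \frac{R_g^2}{3}$. Thus the hypothesis $Q_g \ge 0$ is exactly the pointwise pinching $|\overset{\circ}{Ric}_g|^2 \le \frac{5}{192}R_g^2$.

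If $R_g > 0$, then $g$ is a metric of positive scalar curvature on $T^3$, which is impossible by the theorems of Schoen--Yau and Gromov--Lawson (\cite{S-Y_1, S-Y_2, G-L_1, G-L_2}). If $R_g = 0$, then $Q_g = -2|Ric_g|^2 \le 0$, so $Q_g \ge 0$ forces $Ric_g \equiv 0$; a Ricci-flat $3$-manifold is flat, and we are done. Hence the entire content lies in excluding $R_g < 0$.

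For $R_g < 0$ the goal is to upgrade the pointwise pinching to the conclusion that $g$ is Einstein: an Einstein $3$-manifold has constant sectional curvature, and since $T^3$ admits no spherical or hyperbolic space-form structure it must be flat, forcing $R_g = 0$ and contradicting $R_g < 0$. To show that the metric is Einstein I would run a Bochner/Weitzenböck computation for $E := \overset{\circ}{Ric}_g$. The constant-$R_g$ hypothesis together with the contracted second Bianchi identity make $E$ both trace-free and divergence-free, and in dimension three the full curvature tensor is algebraically determined by $Ric_g$, so the rough Laplacian of $E$ can be written purely in terms of $E$, $E \times E$ and $R_g$; integrating the resulting identity for $\tfrac{1}{2}\Delta|E|^2$ over the closed manifold $T^3$ produces a relation among $\int_{T^3}|\nabla E|^2$, $\int_{T^3}\mathrm{tr}(E^3)$ and $R_g\int_{T^3}|E|^2$. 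Feeding in the pinching $|E|^2 \le \frac{5}{192}R_g^2$ should then force $E \equiv 0$.

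The hard part is closing this last estimate: the integrated Bochner identity contains the cubic term $\int_{T^3}\mathrm{tr}(E^3)$, whose sign is not controlled, so the single identity is not by itself enough to conclude $E \equiv 0$, and one must either extract the sharp pinching constant or supplement it with a second identity. A natural way to circumvent this is to pass to the Cotton tensor $C_{ijk} = \nabla_i E_{jk} - \nabla_j E_{ik}$, which in dimension three is precisely the obstruction to local conformal flatness: if the curvature estimates can be arranged to force $C \equiv 0$, then $g$ is locally conformally flat, and Theorem \ref{thm:rigidity_tori} applies directly to conclude that $g$ is flat, once again contradicting $R_g < 0$. I expect this reduction to the locally conformally flat torus rigidity, rather than the bookkeeping of the Bochner constants, to be the decisive step.
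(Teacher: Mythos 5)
Your reduction is correct as far as it goes: the case split on the sign of the constant $R_g$, the elimination of $R_g>0$ via Schoen--Yau/Gromov--Lawson, and the observation that $R_g=0$ forces $Ric_g\equiv 0$ all agree with the paper (whose algebra, $Q_g=-2|Ric_g|^2+\frac{23}{32}R_g^2$, is equivalent to your trace-free form). But the case you yourself call ``the entire content,'' $R_g<0$, is left genuinely open. The Bochner route stalls exactly where you say it does -- the cubic term $\int \mathrm{tr}(E^3)$ has no sign -- and the fallback via the Cotton tensor is unsupported: the hypothesis is a \emph{pointwise, zeroth-order} pinching of $\overset{\circ}{Ric}_g$, which gives no control whatsoever on $\nabla Ric_g$, hence no reason for $C_{ijk}=\nabla_iE_{jk}-\nabla_jE_{ik}$ to vanish. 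There is no mechanism in your argument that produces either ``Einstein'' or ``locally conformally flat,'' so the proof does not close.

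The irony is that your pinching inequality already finishes the proof by pure pointwise linear algebra -- no Bochner identity, no Einstein condition, no conformal flatness -- and this is what the paper does. Normalize $R_g=-1$ and diagonalize $Ric_g$ at a point with eigenvalues $\lambda_1,\lambda_2,\lambda_3$; in dimension three the Weyl tensor vanishes, so the curvature tensor is determined by Ricci and the sectional curvatures are
\begin{align*}
K_{ij}=\lambda_i+\lambda_j-\tfrac{R_g}{2}=\lambda_i+\lambda_j+\tfrac{1}{2}.
\end{align*}
Writing $\lambda_i=\mu_i+\frac{R_g}{3}$ with $\sum_i\mu_i=0$, your bound $|\overset{\circ}{Ric}_g|^2\le\frac{5}{192}R_g^2$ gives $|\mu_k|\le\sqrt{\tfrac{2}{3}\cdot\tfrac{5}{192}}=\frac{\sqrt{10}}{24}$ for each $k$, hence
\begin{align*}
K_{ij}=-\mu_k+\tfrac{R_g}{6}\le \tfrac{\sqrt{10}}{24}-\tfrac{1}{6}<0,
\end{align*}
since $\sqrt{10}<4$. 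So every sectional curvature is strictly negative, and a torus admits no such metric (the paper cites Corollary 2 of Ballmann--Brin--Eberlein \cite{B-B-E}; alternatively, a closed negatively curved manifold cannot have abelian fundamental group). This is the missing step: the pinching should be converted into a sectional-curvature sign via the three-dimensional curvature decomposition, and the contradiction is topological, not analytic.
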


\begin{proof}

Suppose such a metric $g$ exists, its scalar curvature is non-positive and it is flat only if $R_g = 0$ (c.f. \cite{ S-Y_1, S-Y_2, G-L_1, G-L_2}). 

If it is non-flat, without loss of generality, we can assume $$R_g = -1.$$ Then we have $$Q_g = - \frac{1}{4}\Delta_g R_g - 2 |Ric_g|^2 + \frac{23}{32} R_g^2 = - 2 |Ric_g|^2 + \frac{23}{32} R_g^2 \geq 0.$$

That is $$|Ric_g|^2 \leq \frac{23}{64}R_g^2 = \frac{23}{64}.$$

At any point $p \in M$, choose an orthonormal basis $\{e_1, e_2, e_3 \}$ for $T_p M$ such that the Ricci tensor at $p$ is diagonal. Let $\lambda_i$, $i = 1,2,3$ be the eigenvalues of $Ric_g (p)$. Then we have $$\lambda_1 + \lambda_2 + \lambda_3 = -1$$ and $$\lambda_1^2 + \lambda_2^2 + \lambda_3^2 \leq \frac{23}{64}.$$

Hence for $i \neq j$, we have
\begin{align*}
0 \geq& \lambda_i^2 + \lambda_j^2 + ( 1+ (\lambda_i + \lambda_j) )^2 - \frac{23}{64} \\
=& (\lambda_i + \lambda_j)^2 + 2 (\lambda_i + \lambda_j) + \lambda_i^2 + \lambda_j^2 + \frac{41}{64} \\
\geq &\frac{3}{2}(\lambda_i + \lambda_j)^2 + 2(\lambda_i + \lambda_j) + \frac{41}{64},
\end{align*}
where the last step was achieved by the mean value inequality $$\lambda_i^2 + \lambda_j^2 \geq \frac{(\lambda_i + \lambda_j)^2}{2} .$$
That is,
$$(\lambda_i + \lambda_j)^2 + \frac{4}{3}(\lambda_i + \lambda_j) + \frac{41}{96} \leq 0,$$
which implies that $\lambda_i + \lambda_j < - \frac{2}{3} + \frac{\sqrt{10}}{24}  < -\frac{1}{2}$.

Then the sectional curvature of the plane spanned by $e_i$ and $e_j$ satisfies
\begin{align*}
K_{ij} = R_{ijji} = R_{ii} g_{jj} + R_{jj} g_{ii} - \frac{1}{2} R g_{ii}g_{jj} = \frac{1}{2} + (\lambda_i + \lambda_j) < 0.
\end{align*}

Thus $g$ has negative sectional curvature. But the torus does not admit a metric with nonpositive sectional curvature by Corollary 2 in \cite{B-B-E}, which is a contradiction.
\end{proof}

\begin{remark}
From the last inequality in the above argument, we can easily see that the conclusion actually allows a perturbation on the metric. That is, any metric on $T^3$ with scalar curvature sufficiently $C^4$-closed to a negative constant can not have a nonnegative $Q$-curvature, unless it is flat. 
\end{remark}

\begin{remark}
According to the solution of Yamabe problem, there is a metric with constant scalar curvature in each conformal class on any closed Riemannian manifold. Thus we can conclude that for any metric $g$ on $T^n$, if $g$ is not conformally flat, then it must be conformal to a metric with constant negative scalar curvature. However, $Q$-curvature cannot stay nonnegative when performing conformal changes on metrics in general. We hope to develop some techniques to resolve this issue in the future.
\end{remark}

According to the \emph{Bieberbach Theorem} (See Theorem 10.33 in \cite{C-L-N}), any $n$-dimensional flat Riemannian manifold is a finite quotient of the torus $T^n$. So based on Theorem \ref{flat_local_rigidity}, \ref{thm:rigidity_tori} and above observations, we give the following conjecture:

\begin{conjecture}\label{Positive_Q_on_torus}
For $n \geq 3$, let $(M^n,\bar{g})$ be a connected closed flat Riemannian manifold, then there is no metric with pointwisely positive $Q$-curvature. Moreover, if $g$ is a metric on $M$ with $$Q_g\geq 0,$$ then $g$ is flat.
\end{conjecture}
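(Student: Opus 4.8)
The plan is to reduce the conjecture to the torus and then split according to whether $g$ is locally conformally flat. By the \emph{Bieberbach Theorem} (Theorem 10.33 in \cite{C-L-N}) the flat manifold $(M^n,\bar g)$ is finitely covered by $T^n$; pulling $g$ back to this cover preserves the pointwise sign of $Q_g$ (the $Q$-curvature is a local invariant, hence lifts), and flatness of the lift descends to $M$. Thus it suffices to treat $M=T^n$. On $T^n$ one argues by a dichotomy: if $g$ is locally conformally flat, then Lemma \ref{lem:conf_structure_tori} together with Theorem \ref{thm:rigidity_tori} already forces $g$ to be flat, so no pointwise positive $Q$ is possible. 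The entire content of the conjecture therefore lies in the \emph{non}-locally-conformally-flat case, i.e. metrics with $W_g\not\equiv0$.

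For $n=4$ this remaining case does not occur: the \emph{Gauss-Bonnet-Chern formula} (\ref{Gauss_Bonnet_Chern}) on $T^4$ reads $\int_{T^4}(Q_g+\tfrac14|W_g|^2)\,dv_g=8\pi^2\chi(T^4)=0$, so $Q_g\geq0$ forces $W_g\equiv0$ and $Q_g\equiv0$, returning us to the conformally flat case. This is precisely why Theorem \ref{thm:rigidity_tori} is unconditional in dimension $4$. For general $n\geq3$ there is no topological identity tying $Q_g$ to $W_g$, and this is where a genuinely new argument is needed.

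For the general case my plan is to import the global rigidity machinery that Schoen-Yau and Gromov-Lawson (\cite{S-Y_1, S-Y_2, G-L_1, G-L_2}) developed for scalar curvature, where $R_g\geq0$ on $T^n$ forces flatness via either a minimal-hypersurface descent or a Dirac/index argument. I would attempt the conformal route first: solve the fourth-order (constant-$Q$) Yamabe-type problem to normalize within the conformal class, then seek a Bochner–Weitzenb\"ock identity or an integral inequality, in the spirit of the dimension-$3$ argument above, showing $Q_g\geq0$ to be incompatible with the topology of $T^n$ unless $g$ is flat. A parallel strategy is to hunt for an integral quantity playing the role that $\int(Q+\tfrac14|W|^2)\,dv$ plays in dimension $4$ — for instance by testing the trace identity $\mathscr{L}_g 1=-2Q_g$ and exploiting positivity of the Paneitz operator on the torus.

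I expect the main obstacle to be exactly the bad behavior of $Q$-curvature under conformal deformations flagged in the remark preceding the conjecture: unlike scalar curvature, $Q_g\geq0$ is \emph{not} preserved when one passes to the Yamabe representative, so one cannot simply normalize to constant scalar curvature and invoke Schoen-Yau. Compounding this, the fourth-order operator $\Delta_g^2$ admits no maximum principle, so the clean subharmonicity argument of Proposition \ref{prop:conf_Ricci_flat} is unavailable once $W_g\neq0$; and the negative-definite term $-\tfrac{2}{(n-2)^2}|Ric_g|^2$ in $Q_g$ means $Q_g\geq0$ is fully compatible with negative scalar, even negative sectional, curvature, blocking any direct reduction to the scalar-curvature rigidity. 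The crux is thus to manufacture a geometric or integral quantity that is simultaneously sign-controlled by $Q_g\geq0$ and topologically obstructed on $T^n$; producing such a quantity in dimensions $n\neq4$ is the heart of the difficulty and the reason the statement is posed only as a conjecture.
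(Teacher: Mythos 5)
The statement you were asked to prove is posed in the paper as a \emph{conjecture}, and the paper itself offers no proof of it; there is consequently no "paper proof" to match, and your proposal does not close the gap either. What you do establish is sound but is precisely the part the paper already proves: the reduction via the Bieberbach theorem to $T^n$ is correct (positivity of $Q_g$ lifts to the finite torus cover and flatness descends, both being local properties), the locally conformally flat case on $T^n$ is Theorem \ref{thm:rigidity_tori} (via Lemma \ref{lem:conf_structure_tori} and Proposition \ref{prop:conf_Ricci_flat}), and the unconditional $n=4$ statement is the Gauss-Bonnet-Chern argument (\ref{Gauss_Bonnet_Chern}). The actual content of the conjecture --- metrics that are \emph{not} locally conformally flat, in dimensions $n\neq 4$ --- is left by you as a research program: you list candidate strategies (minimal-hypersurface or Dirac/index descent in the spirit of \cite{S-Y_1,S-Y_2,G-L_1,G-L_2}, a fourth-order Yamabe normalization, a Bochner-type integral identity) but execute none of them, and you yourself name the obstructions (the sign of $Q$ is not conformally controlled, $\Delta_g^2$ has no maximum principle, $Q_g\geq 0$ is compatible with negative scalar curvature) that defeat each one. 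So this is a genuine gap, indeed the entire open problem, not a step that can be patched.

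One correction to your dichotomy: for $n=3$ the Weyl tensor vanishes identically, so "not locally conformally flat" is \emph{not} equivalent to $W_g\not\equiv 0$; local conformal flatness in dimension three is governed by the Cotton tensor. The open case in $n=3$ therefore consists of metrics with nonvanishing Cotton tensor, for which the paper's only partial result is the proposition on $T^3$ assuming constant scalar curvature (whose Ricci-eigenvalue and sectional-curvature argument you might have cited as a template for the kind of pointwise-to-global estimate you are seeking). In short: your write-up is an accurate, well-organized account of exactly what is known --- which coincides with what the paper proves --- but it is a plan, not a proof, of the conjectured statement.
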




\begin{remark}
This is a higher order analogue of the famous result due to Schoen-Yau and Gromov-Lawson (\cite{S-Y_1, S-Y_2, G-L_1, G-L_2}), which says that there is no nonnegative scalar curvature metric on tori unless they are flat. \end{remark}

\section{Relation between vacuum static spaces and $Q$-singular spaces}

In this section, we will discuss the relation between $Q$-singular spaces and vacuum static spaces.

\begin{definition}
We say a complete Riemannian manifold $(M,g)$ is vacuum static, if there is a smooth function $f (\not\equiv 0)$ on $M$ solving the following \emph{vacuum static equation} 
\begin{align}
\gamma_g^* f := \nabla^2 f - \left(Ric_g - \frac{R_g}{n-1} g \right) f = 0.
\end{align}
We also refer $(M,g,f)$ as a \emph{vacuum static space}, if $f (\not\equiv 0) \in \ker \gamma_g^*$.
\end{definition}

Now we can prove the main result (Theorem \ref{Q-static_R-static}) in this section.

\begin{theorem}
Let $\mathcal{M}_R$ be the space of all closed vacuum static spaces and $\mathcal{M}_Q$ be the space of all closed $Q$-singular spaces. 
Suppose $(M, g, f) \in \mathcal{M}_R \cap \mathcal{M}_Q$, then $M$ is Einstein necessarily. In particular, it has to be either Ricci flat or isometric to a round sphere.
\end{theorem}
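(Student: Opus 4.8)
The plan is to reduce everything to the classification already established in Theorem \ref{Classificaition_Q_singular_Einstein}: once we know such an $(M,g)$ is Einstein with $R_g \geq 0$, that theorem immediately yields ``Ricci flat or round sphere''. So the two things I must produce are the sign $R_g \geq 0$ and the Einstein condition, both extracted from the hypothesis that the \emph{same} function $f$ lies in $\ker\gamma_g^*$ and in $\ker\Gamma_g^*$.

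First I would record the standard consequences of $\gamma_g^* f = 0$. Taking the divergence of $\nabla^2 f = (Ric_g - \frac{R_g}{n-1}g)f$ and using the contracted second Bianchi identity $\delta_g Ric_g = -\frac{1}{2}dR_g$ gives $f\,dR_g = 0$, hence $R_g$ is constant (as $f \not\equiv 0$). Tracing the static equation yields the eigenvalue relation $\Delta_g f = -\frac{R_g}{n-1}f$; pairing with $f$ and integrating gives $\int_M |\nabla f|^2 = \frac{R_g}{n-1}\int_M f^2$, so $R_g \geq 0$, with $R_g = 0$ exactly when $f$ is constant. In the borderline case $R_g = 0$ the static equation degenerates to $\nabla^2 f = f\,Ric_g$ with $f$ a nonzero constant, forcing $Ric_g = 0$, so $M$ is Ricci flat and we are done. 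This leaves the case $R_g > 0$.

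For $R_g > 0$ I would substitute the static data into the explicit formula for $\Gamma_g^*$ from Proposition \ref{prop:Gamma^*}. Because $R_g$ is constant the terms carrying $dR_g$ and $\nabla^2 R_g$ drop out, and the static equation supplies $\nabla^2 f$, $\Delta f$, and, after removing the trace, the clean identity $\overset{\circ}{\nabla^2 f} = f\,\overset{\circ}{Ric}_g$, where $\overset{\circ}{Ric}_g$ denotes the traceless Ricci tensor. The decisive structural observation is that, after this substitution, the trace-free parts of the $A_n$-block and of the $C_n$-block of $\Gamma_g^* f$ cancel identically, so that the trace-free equation $\overset{\circ}{(\Gamma_g^* f)} = 0$ collapses to a single second-order equation coming from the $B_n$-block, schematically $\Delta_g(f\,\overset{\circ}{Ric}_g) + 2f\,\overset{\circ}{(\overset{\circ}{Rm}\cdot Ric_g)} + 2\,\overset{\circ}{\nabla\delta(f Ric_g)} = 0$, where $\delta_g(f Ric_g) = -\iota_{\nabla f}Ric_g$ by the Bianchi identity.

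The main obstacle is to show that this $B_n$-equation forces $f\,\overset{\circ}{Ric}_g \equiv 0$; this step is genuinely necessary, since it must fail for non-Einstein static spaces such as $S^1\times S^2$, so no purely algebraic cancellation can replace it. I would pair the reduced equation with $f\,\overset{\circ}{Ric}_g$ and integrate, integrating the $\nabla\delta$-term by parts and commuting derivatives via the static identity for $\nabla^2 f$, aiming to rewrite the result as $-\int_M |\nabla(f\,\overset{\circ}{Ric}_g)|^2 + (\text{lower-order, sign-controlled terms}) = 0$, a Bochner-type identity whose remaining terms I expect to organize into a nonnegative combination. This yields $f\,\overset{\circ}{Ric}_g \equiv 0$; since $f$ solves $\Delta_g f = -\frac{R_g}{n-1}f$ it is an eigenfunction, hence nonzero off a nodal set of measure zero, so $\overset{\circ}{Ric}_g \equiv 0$ by continuity and $M$ is Einstein. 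Feeding this back into Theorem \ref{Classificaition_Q_singular_Einstein} with $R_g > 0$ gives that $(M,g)$ is a round sphere, completing the proof.
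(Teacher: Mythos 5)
Your reduction is sound as far as it goes: $R_g$ is constant, $R_g \geq 0$ with the $R_g = 0$ case correctly dispatched, and after substituting the static data the $A_n$- and $C_n$-blocks of $\Gamma_g^* f$ indeed drop out (in fact they vanish entirely, not merely their trace-free parts: with $\Delta f = -\tfrac{R}{n-1}f$ the $A_n$-block equals $\gamma_g^*(\Delta f) = -\tfrac{R}{n-1}\gamma_g^* f = 0$, and the $C_n$-block is a multiple of $\gamma_g^* f = 0$). But the decisive step --- extracting $\overset{\circ}{Ric} \equiv 0$ from the surviving $B_n$-block equation --- is not actually proved in your proposal; it is only announced as a Bochner-type identity whose remaining terms you ``expect to organize into a nonnegative combination.'' That expectation is doubtful: pairing your trace-free equation with $f\,\overset{\circ}{Ric}$ produces, besides $-\int_M |\nabla(f\overset{\circ}{Ric})|^2\,dv_g$, terms of the form $\int_M f^2\, (\overset{\circ}{Rm}\cdot Ric)\cdot \overset{\circ}{Ric}\ dv_g$ in which the full Riemann tensor appears with no definite sign, so there is no generic way to close the argument. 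This is a genuine gap, and it sits exactly at the step you yourself identify as the main obstacle.

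What you missed is that the needed information sits in the trace, which you discarded. The surviving equation can be written (as in the paper) as $f\Delta_L Ric + g\,(Ric\cdot\nabla^2 f) + \tfrac{R}{n-1}f\,Ric + 2\,C\cdot\nabla f = 0$, where $C$ is the Cotton tensor. Taking its trace, using $tr\,\Delta_L Ric = \Delta R = 0$, $g^{jk}C_{ijk} = \tfrac{1}{2}\nabla_i R = 0$, and the static equation $\nabla^2 f = \left(Ric - \tfrac{R}{n-1}g\right) f$ once more, gives the pointwise identity
\[
n\,Ric\cdot\nabla^2 f + \tfrac{R^2}{n-1}f \;=\; n\left(|Ric|^2 - \tfrac{R^2}{n}\right) f \;=\; n\left|Ric - \tfrac{R}{n}g\right|^2 f \;=\; 0,
\]
whence $\overset{\circ}{Ric} = 0$ on the dense set where $f \neq 0$ (as you note, the nodal set of $f$ has empty interior), so $(M,g)$ is Einstein, and Theorem \ref{Classificaition_Q_singular_Einstein} finishes exactly as you intend. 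Note also that your heuristic that ``no purely algebraic cancellation can replace it'' because of $S^1\times S^2$ is misleading: the computation above is purely pointwise and algebraic, but it uses both equations on the \emph{same} function $f$ simultaneously; its conclusion merely shows that the static potential of $S^1\times S^2$ does not satisfy the $Q$-singular equation, so there is no conflict with the existence of non-Einstein vacuum static spaces.
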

\begin{proof}
If $(M,g,f)$ is vacuum static, $M$ has constant scalar curvature necessarily (c.f. \cite{F-M}). Then the $Q$-singular equation can be reduced to
\begin{align}\label{eqQ_and_Rstatic}
\Gamma_g^* f =& A_n \left( - g \Delta^2 f + \nabla^2
\Delta f - Ric \Delta f \right)- 2 C_n R \left( g\Delta f - \nabla^2 f + f
Ric \right)\\
&\notag - B_n \left( \Delta (f Ric) + 2 f
\overset{\circ}{Rm}\cdot Ric + g \delta^2 (f Ric) + 2 \nabla \delta (f
Ric) \right)\\ =&
\notag 0.
\end{align}

By \emph{Contracted $2^{nd}$ Bianchi Identity}
\begin{align*}
\delta Ric = - \frac{1}{2} dR = 0,
\end{align*}
we can simplify (\ref{eqQ_and_Rstatic}) furthermore,
\begin{align*}
&-\frac{1}{B_n}\Gamma_g^* f \\
=& -\frac{A_n}{B_n} \left( \nabla^2 \Delta f - g \Delta^2 f - Ric \Delta f \right) - \frac{2C_n}{B_n}R \left( \nabla^2 f - g \Delta f - Ric f \right) \\ &+ f (\Delta Ric + 2 \overset\circ{Rm} \cdot Ric) + g (Ric \cdot \nabla^2 f) + Ric \Delta f - 2 \nabla^2 f \times Ric + 2 C \cdot \nabla f\\
=& -\frac{A_n}{B_n}\ \gamma_g^* (\Delta f) - \frac{2C_n}{B_n}R\ \gamma_g^* f + f \Delta_L Ric + g (Ric \cdot \nabla^2 f)  + Ric \Delta f  +\frac{2 R }{n-1}f Ric + 2 C \cdot \nabla f\\
=& 0,
\end{align*}
where $\Delta_L$ is the Lichnerowicz Laplacian and $C$ is Cotton tensor,
\begin{align*}
C_{ijk} = \left( \nabla_i R_{jk} - \frac{1}{2(n-1)}g_{jk}\nabla_i R\right) - \left( \nabla_j R_{ik} - \frac{1}{2(n-1)}g_{ik} \nabla_j R \right) = \nabla_i R_{jk} - \nabla_j R_{ik}
\end{align*}
and
\begin{align*}
(C \cdot \nabla f)_{jk} := C_{ijk} \nabla^i f.
\end{align*}

Now suppose $g$ is also vacuum static, then $\Delta f = -\frac{R}{n-1}f$ and hence
\begin{align*}
\gamma_g^* (\Delta f) = -\frac{R}{n-1} \gamma_g^* f = 0.
\end{align*}

Thus we have
\begin{align*}
\Gamma_g^* f
= f \Delta_L Ric + g (Ric \cdot \nabla^2 f)  + \frac{R}{n-1} f Ric + 2 C \cdot \nabla f = 0.
\end{align*}

Taking trace and applying the vacuum static equation,
\begin{align*}
tr\ \Gamma_g^* f
=& f \Delta R + n Ric \cdot \nabla^2 f + \frac{R^2}{n-1} f\\
=& n Ric \cdot \left( Ric - \frac{R}{n-1}g\right) f + \frac{R^2}{n-1} f\\
=& n \left( |Ric|^2 - \frac{R^2}{n} \right) f \\
=& n \left|Ric - \frac{R}{n} g\right|^2 f\\
=& 0.
\end{align*}

Since in an vacuum static space, $df \neq 0$ on $f^{-1}(0)$ (c.f. \cite{F-M}). Then $f^{-1}(0)$ is a regular hypersurface in $M$ and hence $f \neq 0$ on a dense subset of $M$, which implies that $$Ric = \frac{R}{n} g.$$ \emph{i.e.} $(M,g)$ is Einstein.

For a closed vacuum static space, its scalar curvature is nonnegative necessarily. Hence the assertion follows easily from Theorem \ref{Classificaition_Q_singular_Einstein}.

\end{proof}

\appendix
\section{Conformal covariance of $Q$-curvature and Paneitz operator}

In this appendix, we will give a brief introduction to $Q$-curvature and Paneitz operator from the viewpoint of conformal geometry. \\

Let $(M^2, g)$ be a $2$-dimensional Riemannian surface. Consider the conformal metric $\tilde{g} = e^{2u}g$. A simple calculation shows that the \emph{Laplacian-Beltrami operator} is in fact conformally covariant: 
\begin{equation}
\Delta_{\tilde{g}} = e^{-2u}\Delta_{g}.
\end{equation}
And Gaussian curvature satisfies 
\begin{equation}
K_{\tilde{g}} = e^{-2u}\left(-\Delta_{g}u + K_{g} \right).
\end{equation}\\

For $n\geq 3$, a second order differential operator called \emph{conformal Laplacian} associated to a  Riemannian metric $g$ is defined as
\begin{align}
L_{g}:=-\frac{4(n-1)}{(n-2)}\Delta_{g} + R_{g}.
\end{align}
If we take $\tilde{g}=u^{\frac{4}{n-2}}g$, $u>0$, to be a metric conformal to $g$, then 
\begin{equation}
L_{\tilde{g}}\phi = u^{-\frac{n+2}{n-2}}L_{g}(u\phi)
\end{equation}
for any $\phi \in C^2(M)$. Hence $L_g$ can be thought as a higher dimensional analogue of the \emph{Laplacian-Beltrami operator} on surfaces. By taking $\phi \equiv 1$, the scalar curvature can be calculated as follows:
\begin{equation}
R_{\tilde{g}} =u^{-\frac{n+2}{n-2}} L_{g}u.
\end{equation}\\

As a generalization, we seek for a higher order operator which satisfies the similar transformation law. In \cite{Paneitz}, Paneitz introduced a $4^{th}$-order differential operator $P_g$ (people call it \emph{Paneitz operator} now) for any dimension $n \geq 3$. Shortly after, Branson (\cite{Branson}) realized that the $0^{th}$-order term in Paneitz operator actually can be defined as a generalization of $Q$-curvature for dimensions $n\neq 4$:
\begin{align}
Q_{g} = A_n \Delta_{g} R_{g} + B_n |Ric_{g}|_{g}^2 + C_nR_{g}^2,
\end{align}
where $A_n = - \frac{1}{2(n-1)}$ , $B_n = - \frac{2}{(n-2)^2}$ and
$C_n = \frac{n^2(n-4) + 16 (n-1)}{8(n-1)^2(n-2)^2}$.

Thus Paneitz operator can be rewritten as 
\begin{align}
P_g = \Delta_g^2 - div_g \left[(a_n R_g g + b_n Ric_g) d\right] + \frac{n-4}{2}Q_g,
\end{align}
where $a_n = \frac{(n-2)^2 + 4}{2(n-1)(n-2)}$ and $b_n = - \frac{4}{n-2}$.

In fact, when $n=4$, let $\tilde{g} = e^{2u} g$ be a conformal metric, then the Paneitz operator follows similar transformation law as Laplacian-Beltrami operator on surfaces:
\begin{align}
P_{\tilde{g}} = e^{-4u}P_g
\end{align}
and $Q$-curvature satisfies
\begin{align}\label{eqn:conf_Q_4}
Q_{\tilde{g}} =  e^{-4u} \left(P_g u + Q_g \right).
\end{align}

As for $n\neq 4$, let $\tilde{g} = u^{\frac{4}{n-4}}g$, $u > 0$, then
\begin{align}
P_{\tilde{g}} \phi = u^{-\frac{n+4}{n-4}}P_g (u \phi),
\end{align}
for any $\phi \in C^4 (M)$. In particular, by taking $\phi \equiv 1$, we get the transformation law for $Q$-curvature:
\begin{align}\label{eqn:conf_Q_n}
Q_{\tilde{g}} = \frac{2}{n-4}u^{-\frac{n+4}{n-4}}P_g u.
\end{align}
In fact, Paneitz operator is a higher order analogue of conformal Laplacian.\\

Based on the similarity between $Q$-curvature and scalar curvature, one can consider a \emph{Yamabe-type problem} for $Q$-curvature: seeking for constant $Q$-curvature metrics in a given conformal class. Many people have contributed to this problem or related problems under different geometric assumptions.

In particular for closed $4$-manifolds, one of the significance for $Q$-curvature is that the total $Q$-curvature is a conformal invariant. Indeed, according to (\cite{Gur98, Gur99}), Paneitz operator $P_{g}$ and total $Q$-curvature together can provide us some geometric information about manifolds. Thus under some geometric conditions, the existence of constant $Q$-curvature metric can be obtained (c.f. \cite{CY95, DM08, LLL12}). 

As for dimensions other than four, different approaches were applied due to the lack of a maximum principle. One of the approaches is to understand the relation among $Q$-curvature equations, Green's function of Paneitz operator and certain geometric invariants. We refer to \cite{QR06, Lin15, GM14, HY14a, HY14b, GHL15} for readers who are interested in it.


\bibliographystyle{amsplain}

\end{document}